\documentclass[a4paper,11pt]{article}
\usepackage{amsfonts,amsmath,amssymb,amsthm,esint}

\textwidth=      15.5 cm
\textheight=     21.6 cm
\oddsidemargin=   0   cm
\parindent=       0   cm

\newtheorem{lemma}{Lemma}[section]
\newtheorem{theorem}[lemma]{Theorem}
\newtheorem{corollary}[lemma]{Corollary}
\newtheorem{proposition}[lemma]{Proposition}
\newtheorem{remark}[lemma]{Remark}

\def\C{\widetilde C}

\def\Om{\Omega}

\def\W{\widetilde{M}}
\def\WW{\widetilde{W}}
\def\w{\widetilde{w}}
\newcommand{\mc}{\mathcal}




\newcommand{\opsi}{{\overline \psi}}

\newcommand{\oc}{{\overline c}}






\newcommand{\tr}{\operatorname{\text{tr}}}
\newcommand{\Z}{{\mathbb Z}}

\newcommand{\R}{{\mathbb R}}
\newcommand{\Per}{{\rm Per}}
\newcommand{\T}{{\mathbb T}}

\begin{document}

\title{Long-time behavior of the mean curvature flow with periodic forcing}

\author{
Annalisa Cesaroni\footnote{
Dipartimento di Matematica Pura e Applicata,
Universit\`a di Padova,
via Trieste 63, 35121 Padova, Italy, email: acesar@math.unipd.it, novaga@math.unipd.it}  
\and Matteo Novaga\footnotemark[\value{footnote}]
}

\date{}

\maketitle

\begin{abstract}
We consider the long-time behavior of the mean curvature flow 
in heterogeneous media with periodic fibrations, modeled as an additive driving force. 
Under appropriate assumptions on the forcing term, we show existence 
of generalized traveling waves with maximal speed of propagation,  
and we prove the convergence of solutions to the forced mean curvature flow 
to these generalized waves. 
\end{abstract}
 
 
\section{Introduction}

We are interested in  the long-time behavior of the mean curvature flow  in a periodic heterogeneous medium. 
The evolution  law can be written as a forced mean curvature flow
\[
v= \kappa -g 
\] 
where $v$ denotes the inward normal velocity of the evolving hypersurface, $\kappa$ its mean curvature 
(with the convention that $\kappa$ is positive on convex sets)
and $g$ is a periodic forcing term. 
In our model, we assume that the hypersurfaces are graphs with respect to 
a fixed hyperplane  and that the forcing term $g$ does not depend on the variable orthogonal to such  hyperplane (fibered medium). 
Under these assumptions the evolving hypersurface coincides with
the graph of the solution   to the Cauchy problem
\begin{equation}\label{graphequation} 
\left\{ \begin{array}{ll} 
u_t   =    \sqrt{1+|Du |^2}\,{\rm div}\left( \dfrac{Du }{\sqrt{1+|Du|^2}}\right)+  g  \sqrt{1+|Du |^2} 
&   \text{in } (0, +\infty)\times \R^n \\ u(0,\cdot)  = u_0  &  \text{in }  \R^n.
\end{array}\right.
\end{equation}  
We are particularly interested in the asymptotic behavior as $t\to +\infty$ of solutions 
to \eqref{graphequation}, where the initial data $u_0$ and the forcing term $g$ are assumed  
to be Lipschitz continuous and $\Z^n$-periodic. 

The expected result is that, under appropriate assumptions on $g$, there exists a unique constant 
$c\in \R$ and a periodic function $\psi$ such that 
\[u(t,y)-ct -\psi(y) \to 0, \qquad  \text{  as }t\to +\infty,\text{ uniformly in }\R^n .\]  
This is a  result on the asymptotic stability of special solutions to \eqref{graphequation}, 
called  traveling wave solutions, which are of the form $\psi+ct$.  The constant $c$ and the function $\psi$ are respectively the   propagation speed and  the profile of the wave. 

The first question we address in Section \ref{secex} of this paper is about existence of traveling wave solutions to \eqref{graphequation}. 
We provide a construction of such solutions using a  variational approach developed in  \cite{mu} (see also \cite{nm2}).  
In particular, our solutions are critical points of appropriate functionals, 
which are exponentially weighted area functionals with a volume term, depending on the speed of propagation $c$. 
Exploiting this variational structure, we show existence of traveling waves 
under rather weak assumptions on the forcing term $g$, i.e.    
\[
\exists\, A\subseteq (0,1)^n \text{ s.t. }   
\int_{A} g(y)\,dy\, > \,\Per(A,\mathbb T^n)     
\]  where $\Per(A,\mathbb T^n)$ is the periodic perimeter of $A$ (see Section \ref{secnot}). 
Notice that, if $\int_{(0,1)^n} g>0$, then 
the previous condition holds true by taking $A=(0,1)^n$. 

As our solutions are in general not globally defined, we call them {\em generalized traveling waves}. 
In Propositions \ref{psireg} and \ref{support}
we discuss the regularity of these solutions and of their support. 
Moreover, 
in Section \ref{globalsolutionconditions} we list some stronger conditions on the forcing term, involving 
only the oscillation and the norm of $g$, 
under which we show existence of classical (i.e. globally defined) traveling waves (see Proposition \ref{classicaltw}).

We point out that the variational method selects the {\em fastest} traveling waves for \eqref{graphequation} which are bounded above,
in particular it is uniquely defined the speed of propagation $\oc$ of such waves and it holds $\oc\ge \int_{(0,1)^n} g$
(see Corollary \ref{coro}).

We recall that the problem of existence of classical traveling waves for the forced mean curvature flow has already been considered in the literature,
under different assumptions on the forcing term \cite{ls,dky,CLS}.
We also mention \cite{mrr}, where the authors construct  
$V$-shaped traveling waves in the whole space for a constant forcing term (see also \cite{nt,bc,lb} for similar results in the planar case). 
The construction of the traveling fronts in these papers relies mainly on maximum principle type arguments, 
while we use here a variational approach. 
 
The second question of interest is about the convergence, as $t\to+\infty$, of the solution  to \eqref{graphequation} to a traveling wave solution.  
We point out that the long-time behavior of solutions of parabolic problems using viscosity solutions type arguments has been extensively considered in the literature: see \cite{nr} and \cite{bs} for the case of semilinear and quasilinear parabolic problems in periodic environments,  \cite{dl} where the author considers uniformly parabolic operators in bounded domains with Neumann boundary conditions, and \cite{bpt} for the case of  viscous Hamilton-Jacobi equations in bounded domains with Dirichlet boundary conditions. 
However, none of these results applies to mean curvature type equations such as \eqref{graphequation}.   

In Section \ref{secconv} we prove a convergence result under the assumption that there exists a global traveling wave solution. 
In particular, in Corollary \ref{convfort} we show that  the solution $u(t,y)$ to \eqref{graphequation} satisfies
\[
u(t,y)-\oc t \to \psi(y) \qquad {\rm in\ } \mc C^{1+\alpha}(\R^n),{\rm\ as\ }t\to+\infty,
\]
where $\psi + \oc t$ is the traveling wave, which in this case is unique up to an additive constant.

In the general case, we obtain a weaker convergence result. First,
in Proposition \ref{meanconv} we describe the asymptotic behavior as $t\to +\infty$ of the maximum of the function $u(t, \cdot)$.
Namely, letting $Q:=(0,1)^n$, we show that there exists a constant $K >0$ such that 
\[
\min_Q u_0+\oc t \leq \max_Q u(t,y) \leq  \oc t +  K +\frac{\log (1+t)}{\oc} . 
\] 
Then, in Theorem \ref{convergence2} we show that, along a subsequence $t_n\to +\infty$, 
\[
u(t_{n },y)-\max_Q  u(t_{n },\cdot)  \longrightarrow 
\left\{\begin{array}{ll}
\psi(y)  & {\rm\ locally\ in\ } \mc C^{1+\alpha}(E )
\\
-\infty & {\rm\ locally\ uniformly\ in\ } Q\setminus \overline E 
\end{array}\right.
\]  
for all $\alpha\in (0,1)$, where $\psi +\oc t$ is a generalized traveling wave supported in $E\subset Q$.

We point out that the proof of the convergence result, as well as the proof of existence of generalized waves, 
essentially uses variational methods, rather than maximum principle based argumets.

\smallskip 

\paragraph{Acknowledgements.}
The authors warmly thank Guy Barles and Cyrill Muratov for inspiring discussions on this problem.
 
\section{Notation and preliminary results}\label{secnot}

We refer to \cite{AFP} for a general introduction to functions of bounded variation and sets of finite perimeter.
Letting $Q:=(0,1)^n$,
it is a classical result that any  $u\in BV(Q)$ admits a  trace $u^Q $ on $\partial Q$ 
(see e.g. \cite[Thm. 3.87]{AFP}).  Let $\partial_0 Q := \partial Q\cap \{y: \prod_{i=1}^n y_i =0\}$ and let 
$\sigma:  \partial_0 Q\to \partial Q$  be the function 
$\sigma(y) := y+\sum_{i=1}^n \lambda_i(y)e_i$, where $\lambda_i(y)=1$ if $y_i=0$ and $\lambda_i(y)=0$ otherwise.  

We consider the space $BV_{\rm per}(Q)$ of functions 
which have periodic bounded variation in $Q$, where 
the periodic total variation of
$u\in BV(Q)$ is defined as 
\begin{equation}\label{bvper}
|Du|_{\rm per}(Q) := |Du|(Q)
+ \int_{\partial_0 Q} |u^Q(y)-u^Q(\sigma(y))| \,d\mathcal{H}^{n-1}(y).
\end{equation} 
The space $BV_{\rm per}(Q)$ is the space $BV(Q)$ endowed with the norm 
$$\|u\|_{BV_{\rm per}(Q)}:=\|u\|_{L^1(Q)} + |Du|_{\rm per}(Q).$$ Observe that $BV_{\rm per}(Q)$ coincides with $BV(\mathbb T^n)$, 
where $\mathbb T^n:=\R^n/\mathbb Z^n$ is the $n$-dimensional torus. 
For every $E\subseteq Q$ we define the periodic perimeter of $E$ as 
\begin{equation}\label{perper}
\Per(E,\mathbb T^n) :=|D\chi_E|_{\rm per}(Q) 
\end{equation} 
where $\chi_E$ is the characteristic function of $E$. We recall the  isoperimetric inequality \cite{AFP}:    
\begin{proposition}
There exists $C_n>0$ such that 
\begin{equation}\label{iso} 
{\rm{Per}}(E ,\mathbb T^n)\geq C_n|E|^\frac{n-1}{n}
\end{equation}  
for all $E\subseteq Q$ of finite perimeter and such that $|E|\le 1/2$. 
\end{proposition}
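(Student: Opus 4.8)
The plan is to reduce the inequality \eqref{iso} to the Sobolev--Poincar\'e inequality for $BV$ functions on the fixed cube $Q=(0,1)^n$. The first, essentially formal, step is to bound the periodic perimeter from below by the relative perimeter of $E$ in the open cube: the boundary trace term appearing in the definition \eqref{bvper} of $|D\chi_E|_{\rm per}(Q)$ is nonnegative, so by \eqref{perper}
\[
\Per(E,\mathbb T^n)\ =\ |D\chi_E|_{\rm per}(Q)\ \ge\ |D\chi_E|(Q).
\]
Hence it suffices to prove $|D\chi_E|(Q)\ge C_n|E|^{\frac{n-1}{n}}$ for every $E\subseteq Q$ with $|E|\le 1/2$ (when the left-hand side is infinite there is nothing to prove).

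For this I would invoke the standard Sobolev--Poincar\'e inequality for $BV$ (see \cite{AFP}): since $Q$ is a single, fixed bounded Lipschitz domain, there is a constant $\C_n>0$ depending only on $n$ such that
\[
\Big(\int_Q\big|u(y)-u_Q\big|^{\frac{n}{n-1}}\,dy\Big)^{\frac{n-1}{n}}\ \le\ \C_n\,|Du|(Q),\qquad u_Q:=\int_Q u(y)\,dy,
\]
for all $u\in BV(Q)$ (recall $|Q|=1$). Applying this to $u=\chi_E$, for which $u_Q=|E|$ and $|Du|(Q)=|D\chi_E|(Q)$, and splitting the integral over $E$ and over $Q\setminus E$, one gets
\[
\int_Q\big|\chi_E-|E|\big|^{\frac{n}{n-1}}\,dy\ =\ |E|\,(1-|E|)^{\frac{n}{n-1}}+(1-|E|)\,|E|^{\frac{n}{n-1}}\ \ge\ |E|\,(1-|E|)^{\frac{n}{n-1}}\ \ge\ 2^{-\frac{n}{n-1}}\,|E|,
\]
where the last inequality uses $|E|\le 1/2$, i.e. $1-|E|\ge 1/2$. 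Raising to the power $\frac{n-1}{n}$ and combining with the two preceding displays yields $\C_n\,|D\chi_E|(Q)\ge \frac12|E|^{\frac{n-1}{n}}$, which is \eqref{iso} with $C_n:=1/(2\C_n)$.

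The whole argument is routine; the only point deserving attention is that the constant must depend on $n$ alone, which is automatic here because $Q$ is one fixed domain (the unit cube), so its Sobolev constant $\C_n$ is a purely dimensional quantity. A more intrinsic alternative would work directly on the flat torus $\mathbb T^n$, using that geodesic balls of small volume are its isoperimetric regions and that its isoperimetric profile $h$ satisfies $h(v)\ge c_n\, v^{\frac{n-1}{n}}$ for $v\le 1/2$; this route is heavier — it still ultimately rests on the Euclidean isoperimetric inequality — so I would keep the cube-based reduction above.
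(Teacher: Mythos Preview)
The paper does not prove this proposition; it is stated as a classical fact with a reference to \cite{AFP}, so there is no ``paper's approach'' to compare against. Your argument via the $BV$ Sobolev--Poincar\'e inequality on the fixed cube is correct for $n\ge 2$ and is a standard way to derive the relative isoperimetric inequality.

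One small caveat: as written, the exponent $n/(n-1)$ is meaningless when $n=1$, so your display does not cover that case. This is harmless, since for $n=1$ the statement reduces to $\Per(E,\mathbb T^1)\ge C_1$ for all $E$ with $0<|E|\le 1/2$: up to a null set $E$ is a finite nonempty union of arcs on the circle, hence has at least two boundary points, giving $\Per(E,\mathbb T^1)\ge 2$. (Your reduction $\Per(E,\mathbb T^n)\ge |D\chi_E|(Q)$ alone would only give $\ge 1$ in dimension one, since an interval touching $\partial Q$ has relative perimeter $1$; the trace term restores the missing point.) Note also that your constant $C_n=1/(2\widetilde C_n)$ is not sharp, which is fine for the proposition as stated but will not recover the value $C_1=2$ recorded in the subsequent remark.
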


\begin{remark}\rm 
Notice that $C_1=2$.  
\end{remark}


In this paper we always make the following regularity assumption on the initial datum and on the forcing term:  
\begin{equation}\label{initialcondition}
u_0, \,g \text{ are Lipschitz continuous and $[0,1]^n$-periodic.}
\end{equation}  
Using the comparison principle\,\cite{bbbl} and \eqref{initialcondition}, we get that there exists a unique   continuous  solution $u$ to \eqref{graphequation} 
with periodic boundary conditions. Moreover, this solution is locally Lipschitz continuous \cite{ck,eh} and hence smooth for all positive times, 
due to the regularity theory for parabolic problems.

\begin{theorem}\label{thex}
Under assumption \eqref{initialcondition}, 
problem \eqref{graphequation} admits a unique solution  
\[u\in \mc C([0,+\infty)\times Q)\cap \mc \mc C^{1+\frac{\alpha}{2},2+\alpha}((0,T]\times Q)\]  for every $\alpha\in (0,1)$ and $T>0$,
with periodic boundary conditions on $\partial Q$. 
Moreover  
$$u_t \in L^2([0,+\infty)\times Q)\quad \text{and}\quad  Du(t,x)\in L^{\infty}([0,T]\times Q)\quad \text{for every}\ T>0. $$
\end{theorem}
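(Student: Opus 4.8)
The plan is to establish Theorem~\ref{thex} in three stages: existence and uniqueness of a continuous periodic solution, interior parabolic regularity, and the two integrability/boundedness statements for $u_t$ and $Du$. For the first stage, I would invoke the general theory for geometric (degenerate elliptic) equations: equation \eqref{graphequation} can be written as a quasilinear parabolic PDE whose operator is uniformly elliptic once a gradient bound is known, but a priori only degenerate. Under \eqref{initialcondition} the natural framework is that of viscosity solutions for the forced mean curvature flow of graphs; the comparison principle of \cite{bbbl} gives uniqueness, and existence of a continuous solution with periodic boundary data follows by Perron's method, using as barriers the functions $u_0 \pm Ct$ with $C := \|g\|_\infty + \text{(bound on the curvature term for $u_0$)}$, which are super/sub-solutions because $u_0$ is Lipschitz. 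Periodicity of the solution is then inherited from periodicity of $u_0$ and $g$ together with uniqueness (translating a solution by a lattice vector gives another solution with the same data).

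For the second stage, I would quote the interior gradient estimate for graphical mean curvature flow — this is exactly the Ecker--Huisken type bound \cite{eh}, adapted to the forced setting, or alternatively the local Lipschitz regularity result of \cite{ck}. The point is that for $t>0$ one gets a bound on $|Du(t,\cdot)|$ depending only on $\text{osc}\,u_0$, $\|g\|_\infty$ and a lower bound on $t$ (and on the period, which is harmless since the domain is $Q$). Once $Du$ is locally bounded on $(0,T]\times Q$, the operator in \eqref{graphequation} becomes uniformly parabolic with smooth coefficients, so linear (Schauder) parabolic regularity theory — bootstrapping through $\mc C^{1+\alpha/2,2+\alpha}$ and then to $\mc C^\infty$ in the interior — yields $u\in \mc C^{1+\frac{\alpha}{2},2+\alpha}((0,T]\times Q)$ for every $\alpha\in(0,1)$ and every $T>0$. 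Combined with the continuity up to $t=0$ from stage one, this gives the claimed regularity class, and the $L^\infty$ bound on $Du$ over $[0,T]\times Q$ for $T>0$ follows (noting that near $t=0$ one still has the Lipschitz bound inherited from $u_0$, so no blow-up as $t\to 0^+$).

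The remaining, and I expect genuinely the most delicate, point is the global-in-time bound $u_t\in L^2([0,+\infty)\times Q)$. The natural approach is an energy identity: multiply \eqref{graphequation} by $u_t/\sqrt{1+|Du|^2}$ and integrate over $Q$. Since $\frac{u_t}{\sqrt{1+|Du|^2}} = v$ is (minus) the normal velocity, and the right-hand side divided by $\sqrt{1+|Du|^2}$ is $\kappa - g$ wait — more precisely, one obtains $\int_Q \frac{u_t^2}{\sqrt{1+|Du|^2}}\,dy = -\frac{d}{dt}\mathcal A(u(t,\cdot))$ where $\mathcal A(u) := \int_Q \sqrt{1+|Du|^2}\,dy - \int_Q g\,u\,dy$ is the forced area functional (with the volume term $\int g u$). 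Integrating in $t$ from $0$ to $T$ telescopes the right-hand side to $\mathcal A(u_0) - \mathcal A(u(T,\cdot))$, so the claim reduces to showing $\mathcal A(u(T,\cdot))$ is bounded below uniformly in $T$ and to converting $\int \frac{u_t^2}{\sqrt{1+|Du|^2}}$ into control of $\int u_t^2$. The latter is fine on $[0,T]$ for finite $T$ using the gradient bound, but for the \emph{global} $L^2$ bound one must be more careful near $t=0$; one resolves this by the Lipschitz bound on $Du$ on all of $[0,1]\times Q$ (so $\sqrt{1+|Du|^2}$ is bounded there) and the interior estimate for $t\ge 1$. The uniform lower bound on $\mathcal A(u(T,\cdot))$ is the crux: since $u(T,\cdot)$ stays within $\text{osc}\,u_0 + CT$ of a linear-in-$t$ translate this is not obvious, and one uses instead that $\mathcal A$ is essentially bounded below on periodic functions once the volume term $\int_Q g\,u = \int_Q g\,(u - \text{mean}) $ is controlled via the periodic Poincaré/Sobolev inequality against $\int_Q\sqrt{1+|Du|^2}\ge |Du|_{\rm per}(Q)$ — this is exactly where the structural hypothesis on $g$ from the introduction (that $\int_A g \le \Per(A,\mathbb T^n)$ fails only mildly) is used to prevent $\mathcal A$ from being unbounded below, and I would cite or reprove the relevant coercivity estimate. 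If the theorem as stated is meant only under \eqref{initialcondition} without the extra hypothesis on $g$, then the bound on $\mathcal A(u(T,\cdot))$ must instead come from comparison with the traveling-wave sub/supersolutions, bounding $u(T,\cdot)$ between $\oc T + C$ and $\oc T - C$ and estimating $\mathcal A$ accordingly; either way this lower bound is the step requiring the most care.
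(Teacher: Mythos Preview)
Your first two stages match the paper's own treatment exactly: the paper offers only the one-sentence argument preceding the theorem, citing \cite{bbbl} for comparison/uniqueness and \cite{ck,eh} for the local Lipschitz bound, then invoking parabolic regularity. Your elaboration (Perron's method with barriers $u_0\pm Ct$, Schauder bootstrap once the gradient is bounded) is the correct way to flesh this out.

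The third stage, however, has a genuine gap --- and not because your energy identity is wrong. The identity
\[
\int_Q \frac{u_t^2}{\sqrt{1+|Du|^2}}\,dy \;=\; -\frac{d}{dt}\,\mathcal A(u(t,\cdot)),\qquad
\mathcal A(v)=\int_Q\sqrt{1+|Dv|^2}\,dy-\int_Q g\,v\,dy,
\]
is correct, but the lower bound on $\mathcal A(u(T,\cdot))$ you hope for simply fails whenever $\int_Q g\neq 0$: if $g\equiv c>0$ and $u_0\equiv 0$ then $u(t,y)=ct$, $u_t\equiv c$, and $u_t\notin L^2([0,+\infty)\times Q)$. Neither of your proposed fixes rescues this. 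The Poincar\'e-type argument controls only $\int_Q g\,(u-\bar u)$, not $\bar u\int_Q g$, and the mean $\bar u$ grows linearly; comparison with traveling waves pins $u(T,\cdot)$ between $\oc T\pm C$, which still gives $\mathcal A(u(T,\cdot))\sim -\oc T\int_Q g\to -\infty$.

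In fact the paper never proves (nor uses) the global claim $u_t\in L^2([0,+\infty)\times Q)$ as written. What is actually established and used later (Lemma~\ref{lemmadecreasing}, Proposition~\ref{meanconv}) is the \emph{weighted} estimate
\[
\int_0^{+\infty}\!\!\int_Q \frac{e^{\oc w}\,w_t^2}{\sqrt{1+|Dw|^2}}\,dy\,dt \;\le\; F_\oc(u_0)<+\infty,
\qquad w=u-\oc t,
\]
coming from monotonicity of the exponentially weighted functional $F_\oc$, which is bounded below by $0$ by Corollary~\ref{coro}. The unweighted $L^2$ statement in the theorem should be read either as $u_t\in L^2([0,T]\times Q)$ for each fixed $T$ (which \emph{does} follow from your energy identity plus the gradient bound, since $\mathcal A(u(T,\cdot))\ge -\|g\|_\infty\|u(T,\cdot)\|_\infty>-\infty$ for each $T$), or as a slip in the formulation.
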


We need another condition on the  forcing term $g$, in order to prove existence of generalized traveling wave 
solutions to \eqref{graphequation},
namely we assume that 
\begin{equation}\label{gcondition} \exists A\subseteq Q \text{ such that }   
  \int_{A} g(y)\,dy\, > \,\Per(A,\mathbb T^n).    
\end{equation} 
Note that condition \eqref{gcondition} implies $\max_Q g>0$, 
and is fullfilled for instance if $\int_Q g >0$. 

\begin{remark}\rm 
In \cite{bcn} (see also \cite{ct}) we considered a sort of complementary condition to \eqref{gcondition}. Indeed it is proved  that, if $g$ has zero average and there exists $\delta\in (0,1)$ such that 
\begin{equation}\label{gcondstat}
 \int_{A} g(y)\,dy <\delta \, \Per(A,\mathbb T^n) \qquad  \forall A\subseteq Q ,
\end{equation} 
then there exists a periodic stationary solution of \eqref{graphequation}. 
\end{remark} 

We conclude this section by recalling a classical result about the regularity of hypersurfaces of prescribed 
bounded mean curvature \cite[Thm. 4.1]{m}, \cite[Thm. 1]{t}.
\begin{theorem}\label{regularity}
Let $K$ be a Caccioppoli set with bounded  prescribed mean curvature $A(x)\in L^{\infty}$,  $x\in \partial  K$. Then $\mathcal{H}^{k}(\partial K\setminus  \partial^\star K)=0$   for every $k>n-8$,  and  there exists  $\delta>0$,   such that for every $x\in \partial^\star K$ we get that  $\partial K\cap B(x, \delta)=\partial^\star K\cap B(x, \delta)$ and $ \partial K\cap B(x, \delta)$ is a  $\mathcal{C}^{1+\alpha}$ hypersurface 
for any $\alpha\in (0,1)$.
Moreover, letting $(K_n)_n $ be a sequence of Caccioppoli sets such that:
\begin{itemize}
\item[i)] every $ K_n $  is a locally minimizer  of the functional ${\rm Per}(V) +\int_V  A_n(y)dy $, with $\|A_n\|_\infty\leq A$ independent of $n$,
\item[ii)] $K_n$ converges to $K_\infty$ locally in the $L^1$-topology,
\end{itemize}
and letting $x_n\in \partial K_n$, with $x_n\to x_\infty$ as $n\to+\infty$, we have $x_\infty\in \partial K_\infty$. If $x_\infty\in \partial^\star K_\infty$, then $x_n\in \partial^\star K_n$ for all $n>n_0$, and the unit outward normal to $\partial^\star K_n$ at $x_n$ converges to the unit outward normal to $\partial^\star K_\infty$ at $x_\infty$. 
\end{theorem}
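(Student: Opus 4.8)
The plan is to recognize the hypotheses as the standard setting of regularity theory for almost minimizers of the perimeter and to assemble the conclusion from the classical results of \cite{m,t}. The starting point is that a Caccioppoli set $K$ of prescribed mean curvature $A\in L^\infty$ — understood, as in \cite{m}, as a local minimizer of $\Per(V)+\int_V A(y)\,dy$ — is a $(\Lambda,\delta_0)$-minimizer of the perimeter, with $\Lambda=\|A\|_\infty$: comparing $K$ with any competitor $F$ such that $F\Delta K\Subset B_r(x)$, $r<\delta_0$, gives
\[
\Per(K;B_r(x))\ \le\ \Per(F;B_r(x))+\|A\|_\infty\,|K\Delta F|.
\]
The very same inequality, with the common constant $\Lambda=A$, holds for every $K_n$ in the sequence and, since perimeter is lower semicontinuous and the volume term converges under $L^1_{\rm loc}$ convergence, it passes to the limit: $K_\infty$ is a $(\Lambda,\delta_0)$-minimizer as well.

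From $\Lambda$-minimality one obtains the usual toolbox. First, uniform upper and lower density estimates for volume and perimeter at every boundary point (comparison with $K\cup B_r(x)$ and $K\setminus B_r(x)$, the isoperimetric inequality, and an iteration on $r\mapsto|K\cap B_r(x)|$, the volume term being absorbed by taking $\delta_0$ small). These already give the first assertion of the closure statement: if $x_n\in\partial K_n$ and $x_n\to x_\infty$, then $|K_n\cap B_r(x_n)|\ge c\,r^n$ and $|B_r(x_n)\setminus K_n|\ge c\,r^n$ uniformly in $n$, hence the same bounds hold for $K_\infty$ at $x_\infty$, so $x_\infty\in\partial K_\infty$. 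Second, an almost-monotonicity formula for $r\mapsto e^{c\Lambda r}\,r^{1-n}\Per(K;B_r(x))$, which makes the rescalings $(K-x)/\rho$ precompact as $\rho\to0$ and forces every blow-up limit to be a perimeter-minimizing cone (the quantity $\Lambda\rho$ vanishing in the limit).

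The heart of the matter is De Giorgi's $\varepsilon$-regularity theorem, in Tamanini's form \cite{t} for $\Lambda$-minimizers: there is $\varepsilon_0=\varepsilon_0(n)>0$ such that, whenever the spherical excess of $K$ in some ball $B_r(x)$, $r<\delta_0$, is below $\varepsilon_0$, then $\partial K\cap B_{r/2}(x)$ is a $\mathcal{C}^{1+\alpha}$ hypersurface with estimates depending only on $n$, $\|A\|_\infty$ and $\delta_0$; the H\"older exponent can be taken arbitrarily close to $1$ because $A\in L^\infty$ corresponds to the endpoint $p=\iy$ of Massari's $\mathcal{C}^{1,\beta}$ regularity ($\beta=1-n/p$). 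Coupling this with the blow-up analysis, at every $x\in\partial^\star K$ the excess is infinitesimal along a sequence of scales, so a dyadic ball works and $\partial K$ coincides with $\partial^\star K$ and is $\mathcal{C}^{1+\alpha}$ near $x$; a covering/compactness argument turns the radius into a fixed $\delta$ (possibly smaller than $\delta_0$), as claimed. The bound on the singular set is then Federer's dimension-reduction argument: a point of $\partial K\setminus\partial^\star K$ produces, after finitely many blow-ups, a perimeter-minimizing cone in $\R^m$ that is singular at the origin, which by Simons' theorem cannot exist for $m\le7$; hence $\mathcal{H}^k(\partial K\setminus\partial^\star K)=0$ for every $k>n-8$.

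It remains to pass the regularity to the limit in the closure statement. If $x_\infty\in\partial^\star K_\infty$, then the excess of $K_\infty$ at $x_\infty$ is below $\varepsilon_0$ at some scale $r<\delta_0$; since $K_n\to K_\infty$ in $L^1_{\rm loc}$ and, by $\Lambda$-minimality together with the density estimates, the perimeters converge as well, the excess of $K_n$ at $x_n$ at scale $r/2$ is eventually $<\varepsilon_0$. Thus $x_n\in\partial^\star K_n$ for $n$ large, $\partial K_n$ is $\mathcal{C}^{1+\alpha}$ near $x_n$ with uniform estimates, and Arzel\`a--Ascoli yields $\nu_{K_n}(x_n)\to\nu_{K_\infty}(x_\infty)$. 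The main obstacle in this program is, as always, the $\varepsilon$-regularity theorem: the excess-decay iteration (Lipschitz approximation, reverse Poincar\'e inequality, comparison with affine and harmonic competitors) is the substantial analytic work, whereas the density estimates, the monotonicity formula, the dimension reduction and the two closure assertions are comparatively routine once it is available. Since all of this is classical, in the paper we simply invoke \cite[Thm. 4.1]{m} and \cite[Thm. 1]{t}.
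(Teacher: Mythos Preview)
Your proposal is correct and matches the paper's treatment exactly: the paper does not prove this theorem at all but simply recalls it as a classical result, citing \cite[Thm.~4.1]{m} and \cite[Thm.~1]{t}, which is precisely what you conclude in your last sentence. Your sketch of the underlying argument (almost-minimality, density estimates, excess decay and $\varepsilon$-regularity, Federer dimension reduction, closure via convergence of excess) is a faithful outline of the classical theory behind those references, but none of it appears in the paper itself.
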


%
%
\section{Existence and regularity of generalized traveling waves}\label{secex}

We now show existence of special solutions to \eqref{graphequation},
which we call {\em generalized traveling waves}. 
They are solutions of the form $\psi(x)+\oc t$, where the graph of $\psi$ is called the profile of the traveling wave and $\oc$ is called the traveling speed.  
Observe that to prove the existence of a traveling wave solution it is sufficient to  determine $c\in \R$ such that the equation 
\begin{equation}\label{equ1} 
- {\rm div}\left( \frac{D\psi }{\sqrt{1+|D\psi|^2}}\right) =  g(y)-   \frac{c}{\sqrt{1+|D\psi |^2}}
\end{equation}
admits a $\Z^n$-periodic solution $\psi:\R^n\to \R$. 
In the following we will show that it is always possible to define a unique  traveling speed $\oc$ for the problem  under our assumption \eqref{gcondition} on the forcing, but in general, the previous equation does not admit a global solution. 
We will prove  that there exists a maximal set $E\subseteq Q$, which is a sufficiently regular domain,  and a function $\psi:Q\to [-\infty, +\infty)$ (which is defined up to additive constants) such that  $E=\{\psi>-\infty\}$,   $\psi\in \mathcal{C}^{2+\alpha}(E)$ and solves 
\begin{equation}\label{equ2} 
- {\rm div}\left( \frac{D\psi }{\sqrt{1+|D\psi|^2}}\right) =  g(y)-  \frac{\oc}{\sqrt{1+|D\psi |^2}},\qquad \text{ in }E 
\end{equation}  
with the boundary conditions
\begin{equation}\label{singularbc}
 \psi(x)\to -\infty\quad \text{  as } {\rm dist}(x,\partial E)\to 0 
\qquad {\rm for\ }\mathcal H^{n-1}-{\rm a.e.\ }x\in\partial E.
\end{equation}  
Moreover we will show that the solutions we construct  satisfy also 
a  stronger boundary condition,  more natural in viscosity solutions theory, say \begin{equation}\label{singularbc2}
\text{ for every $\phi\in \mathcal{C}^1_{\rm per}(\overline{Q})$,\, $\phi-\psi$ achieves its minimum in $E$.}\end{equation} 

First of all we note that the equation \eqref{equ1} can be interpreted, for any $c>0$,  as the Euler-Lagrange equation associated to the functional 
\begin{equation}\label{functionalu}
F_c (\psi)=\int_Q e^{c \psi(y)}\left(\sqrt{1+ |D\psi(y)| ^2} - \frac{g(y)}{c} \right)dy \qquad \psi\in \mc C^1_{\rm per}(Q).
\end{equation}
Using the  change of variable $\Psi (y):= \frac{e^{c\psi(y)}}{c} $, we can rewrite the functional $F_c$ as     
\begin{equation}\label{functionalv}
F_c (\psi)=G_c (\Psi):=\int_Q \sqrt{c^2\Psi^2(y)+ |D\Psi(y) | ^2} -  g(y)\Psi(y) dy, 
\end{equation}
which can extended as a lower semicontinuous functional on $BV_{\rm per}(Q)$, see \cite{AFP}.
Using $G_c$, we can extend the functional $F_c$ to all measurable functions $\psi:Q\to [-\infty,0)$
such that $e^{c\psi(y)}\in BV_{\rm per}(Q)$ (where we use the notation $e^{-\infty}=0$) by setting 
\begin{equation}\label{gf}
F_c(\psi) := G_c\left( \frac{e^{c\psi(y)}}{c}\right).
\end{equation}
In particular, for all such $\psi$ the following representation formula holds (cfr. \cite[Sec. 12]{giustibook}):
\begin{eqnarray}
\nonumber 
F_c (\psi) \!\!&=&\!\! \sup \left\{ \int_Q e^{c \psi(y)}\left( \frac{{\rm div}\phi'}{c}+\phi_{n+1}\right)dy:
\,(\phi',\phi_{n+1})\in \mc C^1_{\rm per}(Q;\R^{n+1}),
\, |\phi'|^2+\phi_{n+1}^2\le 1\right\}
\\ \label{represent}
&& - \int_Q \frac{e^{c \psi(y)}}{c} g(y)\,dy
\end{eqnarray}
which can be easily checked on smooth functions, and then extends by relaxation 
to all $\psi$ such that $e^{c\psi(y)}\in BV_{\rm per}(Q)$.

\begin{proposition} \label{minima1}
Under the standing assumption \eqref{gcondition} there exists a unique   constant $\oc>0$, with $\int_Q g \leq \overline{c}\leq\max _Q g$, such that 
\begin{itemize}
\item if $0<c<\overline{c}$, then   $\inf \{ G_c(\Psi)\ |\   \Psi\in BV_{\rm per}(Q), \ \Psi\geq 0\}=-\infty$,
\item if $c>\overline{c}$, then       $\inf \{ G_c(\Psi)\ |\   \Psi\in BV_{\rm per}(Q), \ \Psi\geq 0\}=0$, and $G_c(\Psi)>0$ for every  $\Psi\not \equiv 0$,
\item    $\min \{ G_\oc(\Psi)\ |\   \Psi\in BV_{\rm per}(Q), \ \Psi\geq 0\}=0$, and there exists $\Psi\not \equiv 0$ s.t. $G_{\oc}(\Psi)=0$.
\end{itemize} 
\end{proposition}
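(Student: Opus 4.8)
\emph{Proof strategy.} The plan is to deduce all three claims from the convexity, lower semicontinuity and \emph{positive $1$-homogeneity} of $G_c$, together with its monotone dependence on $c$. From the representation \eqref{represent} (equivalently, from the standard relaxation formula $G_c(\Psi)=\int_Q\sqrt{c^2\Psi^2+|\nabla\Psi|^2}\,dy+|D^s\Psi|(Q)-\int_Q g\Psi\,dy$, see \cite{giustibook}) one reads off, for $\Psi\in BV_{\rm per}(Q)$ with $\Psi\ge0$: (i) $G_c(t\Psi)=t\,G_c(\Psi)$ for every $t>0$; (ii) $\Psi\mapsto G_c(\Psi)$ is convex and lower semicontinuous in $L^1(Q)$; (iii) $c\mapsto G_c(\Psi)$ is non-decreasing, with $|G_{c'}(\Psi)-G_c(\Psi)|\le|c-c'|\,\|\Psi\|_{L^1(Q)}$. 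In particular (i) forces $\inf\{G_c(\Psi):\Psi\ge0\}\in\{0,-\iy\}$: it is $\le G_c(0)=0$, and if $G_c(\Psi_0)<0$ for some $\Psi_0$ then $G_c(t\Psi_0)=t\,G_c(\Psi_0)\to-\iy$ as $t\to+\iy$.

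I would then \emph{define}
\[
\oc:=\sup\bigl\{\,c>0:\ \inf\{G_c(\Psi):\Psi\in BV_{\rm per}(Q),\ \Psi\ge0\}=-\iy\,\bigr\}.
\]
By the monotonicity (iii) this set is an interval $(0,\oc)$ or $(0,\oc]$, so the first bullet holds by construction once we check $0<\oc<\iy$ and the stated bounds. Taking $\Psi=\chi_A$ with $A$ as in assumption \eqref{gcondition}, the relaxation formula gives $G_c(\chi_A)=c|A|+\Per(A,\mathbb T^n)-\int_Ag$, which is negative for $0<c<c_0$, where $c_0:=(\int_Ag-\Per(A,\mathbb T^n))/|A|>0$ by \eqref{gcondition}; hence the set contains $(0,c_0)$ and $\oc\ge c_0>0$. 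If instead $c\ge\max_Qg$, then $\sqrt{c^2\Psi^2+|\nabla\Psi|^2}\ge c\Psi$ on $\{\Psi\ge0\}$, so $G_c(\Psi)\ge\int_Q(c-g)\Psi\,dy\ge0$ and $\inf G_c=0$; hence $\oc\le\max_Qg$. Finally $\oc\ge\int_Qg$ (trivial if $\int_Qg\le0$, and otherwise because $G_c(1)=c-\int_Qg<0$ for $c<\int_Qg$), giving the announced $\int_Qg\le\oc\le\max_Qg$.

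For $c>\oc$ we have $\inf G_c\ne-\iy$, so $\inf G_c=0$ by (i). If moreover $G_c(\Psi)=0$ for some $\Psi\ge0$, $\Psi\not\equiv0$, choose $c'\in(\oc,c)$; then by the relaxation formula $G_c(\Psi)-G_{c'}(\Psi)=\int_Q\bigl(\sqrt{c^2\Psi^2+|\nabla\Psi|^2}-\sqrt{c'^2\Psi^2+|\nabla\Psi|^2}\bigr)dy>0$, since the integrand is strictly positive on the positive-measure set $\{\Psi>0\}$; so $G_{c'}(\Psi)<0$, forcing $\inf G_{c'}=-\iy$ and contradicting $c'>\oc$. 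Hence $G_c(\Psi)>0$ for every $\Psi\not\equiv0$, which is the second bullet. The Lipschitz part of (iii) then shows $\inf G_\oc\ne-\iy$: if $G_\oc(\Psi_0)<0$, after rescaling so that $\|\Psi_0\|_{L^1(Q)}=1$ we would get $G_c(\Psi_0)<0$ for some $c>\oc$, contradicting the second bullet; so $\min G_\oc=0$, attained at $\Psi=0$.

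To produce a \emph{nontrivial} minimizer at $c=\oc$, I would take $c_k\uparrow\oc$, pick $\Psi_k\ge0$ with $G_{c_k}(\Psi_k)<0$, and rescale via (i) so that $\int_Q\Psi_k=1$. The bound $G_{c_k}(\Psi_k)<0$ together with $\|\Psi_k\|_{L^1(Q)}=1$ forces $|D\Psi_k|_{\rm per}(Q)\le\|g\|_{L^\infty(Q)}$, so $(\Psi_k)$ is bounded in $BV_{\rm per}(Q)=BV(\mathbb T^n)$; by compactness $\Psi_k\to\Psi_\iy$ in $L^1(Q)$ along a subsequence, with $\Psi_\iy\ge0$ and $\int_Q\Psi_\iy=1$, hence $\Psi_\iy\not\equiv0$. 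By (iii), $G_\oc(\Psi_k)\le G_{c_k}(\Psi_k)+(\oc-c_k)<\oc-c_k\to0$, so by the lower semicontinuity (ii), $G_\oc(\Psi_\iy)\le\liminf_kG_\oc(\Psi_k)\le0$, whence $G_\oc(\Psi_\iy)=0$. Uniqueness of $\oc$ is immediate, since any constant satisfying the three bullets must be the threshold separating the regime $\inf G_c=-\iy$ from the regime $\inf G_c=0$. The inputs (i)--(iii) are routine relaxation theory; the only genuinely delicate point is this last compactness step — normalizing so that the sublevel constraint yields a uniform $BV$ bound, and then making sure that the strict negativity along $c_k\uparrow\oc$ survives, as a non-strict inequality, in the $L^1$-limit — which is precisely where $1$-homogeneity and the joint control of $G_c$ in $(c,\Psi)$ are used.
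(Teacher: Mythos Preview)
Your proof is correct, and although it uses the same basic ingredients as the paper's argument (positive $1$-homogeneity of $G_c$, monotone and Lipschitz dependence on $c$, BV compactness), the organization is genuinely different. The paper does not define $\oc$ as the threshold directly; instead it introduces the \emph{constrained} problem
\[
\mu_c:=\inf\Bigl\{G_c(\Psi):\ \Psi\in BV_{\rm per}(Q),\ \Psi\ge0,\ \int_Q g\Psi=1\Bigr\},
\]
shows by the direct method that a minimizer $\Psi_c$ exists for each $c$, proves $c\mapsto\mu_c$ is continuous and strictly increasing (via the same Lipschitz-in-$c$ estimate you use), and then \emph{defines} $\oc$ by $\mu_{\oc}=0$; the nontrivial minimizer at $\oc$ is simply $\Psi_{\oc}$. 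Your route is more direct: you define $\oc$ as the sup of the set $\{c:\inf G_c=-\iy\}$, deduce the second bullet by the strict monotonicity of $c\mapsto G_c(\Psi)$ on $\{\Psi>0\}$, and manufacture the nontrivial minimizer at $\oc$ by a diagonal limit along $c_k\uparrow\oc$ with the normalization $\int_Q\Psi_k=1$. The advantage of your approach is economy --- you never solve an auxiliary variational problem and the normalization $\int_Q\Psi=1$ is slightly cleaner than the paper's $\int_Q g\Psi=1$; the advantage of the paper's approach is that it packages the whole phase transition into the single monotone function $\mu_c$, which is the viewpoint inherited from \cite{mu,nm2}.
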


\begin{proof} As $G_c$ is positively one-homogeneous, 
it follows that $\inf_{ \Psi\in BV_{\rm per}(Q), \ \Psi\geq 0} G_c(\Psi)$ can be either $0$ or $-\infty$. 
By definition of $G_c$,  if $c>\max_Q g$, then  $ G_c(\Psi)\geq 0$ for every $\Psi\geq 0$, so 
that $\inf_{\Psi\ge 0} G_c(\Psi)=0$. On the other hand, take  $\Psi=\chi_A$, where $\chi_A$ is the characteristic function 
of the set $A$ appearing in   \eqref{gcondition}.  If $A\subset Q$, then by condition \eqref{gcondition} there exists $k>1$ such that 
\[
G_c(\chi_A)= \Per(A,\T^n)+c|A|-\int_A g< -(k-1)\Per(A,\T^n)+c|A|.  
\]  
Then, choosing $0<c<(k-1)\Per(A,\T^n)/|A|$, we obtain that $G_c( \chi_A)<0$, which implies $ \inf_{\Psi\geq 0} G_c(\Psi)=-\infty$. 
Moreover if  $\int_Q g>0$, then 
\begin{equation}\label{eqqq}
G_c(\chi_Q)<0 \qquad {\rm for\ every\ }0<c<\int_Q\, g . 
\end{equation}
For $c>0$ we consider  the constrained problem 
\begin{equation}\label{constraint} 
\inf\left\{G_c (\Psi) \ |\   \Psi\in BV_{\rm per}(Q), \ \Psi\geq 0,\ \int_Q g\Psi=1\right\}.
\end{equation} 
By the direct method of the Calculus of Variations, one can easily show that 
this problem admits a (possibly nonunique) minimizer $\Psi_c$ \cite{giustibook}. 
We  define the function  minimum value as
\[c\mapsto \mu_c:= G_c(\Psi_c)\] 
and we claim that this function is continuous and strictly increasing. 
Notice that, by minimality of $\Psi_c$, we have
\begin{equation}\label{fici}
\int_Q c\Psi_c dy \leq G_c(\Psi_c) + \int_Q g\Psi_c dy = \mu_c + 1.
\end{equation}
The monotonicity  of $\mu_c$ is due to the fact that $G_c(\Psi_c)$ is increasing as a function of $c$. 
To prove the continuity, we follow  the same argument as in \cite[Prop. 4.1]{nm2}. For $c_1 <c_2$, we get
\begin{eqnarray*}
0 & < &  G_{c_2}(\Psi_{c_2})-G_{c_1}(\Psi_{c_1})
\leq  G_{c_2}(\Psi_{c_1})-G_{c_1}(\Psi_{c_1})
\\
&=& \int_Q \sqrt{{c_2}^2 \Psi_{c_1}^2+|D\Psi_{c_1}|^2}-\sqrt{{c_1}^2 \Psi_{c_1}^2+|D\Psi_{c_1}|^2} 
\\
&=&({c_2}-{c_1}) \int_Q \frac{c(y)\Psi_{c_1}^2}{\sqrt{c(y)^2 \Psi_{c_1}^2+|D\Psi_{c_1}|^2} }dy
\\
&\leq& ({c_2}-{c_1})\int_Q \Psi_{c_1} dy
 \leq  \frac{{c_2}-{c_1}}{c_1}\, \left(\mu_{c_1}+1\right)
\end{eqnarray*}
for some $c(y)\in [c_1,c_2]$, where the last inequality follows from \eqref{fici}.
Since the value function is continuous and strictly increasing, it is possible to define $\oc>0$ as the unique constant for which  
$\mu_\oc= G_{\oc}( \Psi_{\oc}) =0$. From \eqref{eqqq} it follows $\oc\ge \int_Q g$.

Observe  that, due to the constraints,  $\Psi_{\oc}\not\equiv 0$  and,  due to the positive one-homogeneity of $G_c$, 
$k\Psi_{\oc}$ is also a minimizers of $G_{\oc}$ for every $k\geq 0$.  

Finally, observe that necessarily if $c>\oc$ and $\Psi\not\equiv 0$, then $G_c(\Psi)>0$. On the contrary, if $G_c(\Psi)=0$
and   $\Psi\not\equiv 0$, then $\int_Q g \Psi =\lambda>0$. So  $\lambda^{-1}\Psi$ would be a minimizer  to \eqref{constraint},  and $\mu_c=0$, for $c>\oc$, in contradiction with the monotonicity of the value function. 
 \end{proof}

Recalling \eqref{gf}, it is immediate to state the analogous result for the functional $F_c$. 

\begin{corollary}\label{coro}
There exists a unique constant $\oc>0$  with $\int_Q g\leq \overline{c}\leq\max _Q g$ such that 
\begin{itemize}
 \item if $0<c<\overline{c}$, then $\inf \{ F_c(\psi)\ |\   e^{c\psi} \in BV_{\rm per}(Q)\}=-\infty$,
\item if $c> \overline{c}$, then $\inf \{ F_c(\psi)\ |\    e^{c\psi}\in BV_{\rm per}(Q)   \}=0$, and $F_c(\psi)>0$ for all $\psi \not \equiv -\infty$, 
\item  there exists $\psi:Q\to [-\infty, +\infty)$ such that $\psi\not \equiv -\infty$, $e^{\oc \psi}\in BV_{\rm per}(Q)$ and $F_{\oc}(\psi)=0$.
\end{itemize} 
\end{corollary}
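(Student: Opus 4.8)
The plan is to deduce Corollary \ref{coro} directly from Proposition \ref{minima1} via the bijective change of variables $\Psi = e^{c\psi}/c$. First I would recall the defining identity \eqref{gf}, namely $F_c(\psi) = G_c(e^{c\psi}/c)$, which is meaningful precisely when $e^{c\psi} \in BV_{\rm per}(Q)$. The key observation is that the map $\psi \mapsto e^{c\psi}/c$ is a bijection between $\{\psi : Q \to [-\infty, 0) \text{ measurable}, \ e^{c\psi} \in BV_{\rm per}(Q)\}$ and $\{\Psi \in BV_{\rm per}(Q) : \Psi \ge 0\}$, where $\psi \equiv -\infty$ corresponds to $\Psi \equiv 0$. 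Actually one must be slightly careful: the functional $F_c$ in \eqref{functionalu} is written for $\psi \in \mc C^1_{\rm per}(Q)$ without sign restriction, but the extension \eqref{gf} together with the one-homogeneity of $G_c$ means that restricting to $\psi < 0$ (i.e. $\Psi$ bounded by $1/c$, or after rescaling, any nonnegative $\Psi$) loses no generality; I would note that adding a constant $k$ to $\psi$ multiplies $\Psi$ by $e^{ck}$ and hence scales $G_c$ by the same factor, so the infimum over $\psi$ with no sign constraint equals the infimum over $\Psi \ge 0$.

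With this correspondence in hand, each of the three bullet points transfers verbatim. For the first bullet, if $0 < c < \oc$, Proposition \ref{minima1} gives $\Psi \ge 0$ in $BV_{\rm per}(Q)$ with $G_c(\Psi) < 0$; replacing $\Psi$ by $\lambda \Psi$ for large $\lambda$ and using one-homogeneity shows $\inf G_c = -\infty$ over $\Psi \ge 0$, and then $\psi = \frac1c \log(c\Psi)$ (interpreting $\log 0 = -\infty$) realizes $F_c(\psi) = G_c(\Psi) \to -\infty$, so $\inf F_c = -\infty$. For the second bullet, when $c > \oc$, Proposition \ref{minima1} gives $G_c(\Psi) \ge 0$ for all $\Psi \ge 0$ with strict inequality unless $\Psi \equiv 0$; since $\Psi \equiv 0$ corresponds to $\psi \equiv -\infty$ and $G_c(0) = 0$, we get $\inf F_c = 0$ attained only in the limit $\psi \to -\infty$, and $F_c(\psi) > 0$ for every $\psi \not\equiv -\infty$. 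For the third bullet, the minimizer $\Psi_{\oc} \not\equiv 0$ with $G_{\oc}(\Psi_{\oc}) = 0$ produced in Proposition \ref{minima1} yields, via $\psi := \frac1{\oc} \log(\oc \Psi_{\oc})$, a function $\psi : Q \to [-\infty, +\infty)$ with $\psi \not\equiv -\infty$, $e^{\oc \psi} \in BV_{\rm per}(Q)$, and $F_{\oc}(\psi) = 0$. The constant $\oc$ is the same in both statements, and the bounds $\int_Q g \le \oc \le \max_Q g$ carry over directly.

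The only genuinely delicate point — and the step I would spend the most care on — is the measurability and integrability bookkeeping in the change of variables: one must verify that $\Psi \in BV_{\rm per}(Q)$, $\Psi \ge 0$ translates to a well-defined extended-real-valued $\psi = \frac1c\log(c\Psi)$ that lies in the class over which $F_c$ was extended in \eqref{gf}, and conversely, with the convention $e^{-\infty} = 0$ handled consistently on the set $\{\psi = -\infty\} = \{\Psi = 0\}$. Since $F_c$ was *defined* by \eqref{gf} precisely on this class, this is really a matter of unwinding notation rather than proving anything substantive; the analytic content is entirely contained in Proposition \ref{minima1}. Hence the proof is short: state the bijection, invoke one-homogeneity to remove the sign constraint, and transcribe the three alternatives.

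\begin{proof}
This is an immediate consequence of Proposition \ref{minima1} and the identity \eqref{gf}. Indeed, the map $\psi \mapsto \Psi := e^{c\psi}/c$ (with the convention $e^{-\infty}=0$) is a bijection between the set of measurable functions $\psi : Q \to [-\infty, +\infty)$ with $e^{c\psi}\in BV_{\rm per}(Q)$ and the set $\{\Psi \in BV_{\rm per}(Q) : \Psi \ge 0\}$, under which $\psi \equiv -\infty$ corresponds to $\Psi \equiv 0$, and which satisfies $F_c(\psi) = G_c(\Psi)$ by \eqref{gf}. Since adding a constant to $\psi$ amounts to multiplying $\Psi$ by a positive constant, and $G_c$ is positively one-homogeneous, the value $\inf\{F_c(\psi) : e^{c\psi}\in BV_{\rm per}(Q)\}$ equals $\inf\{G_c(\Psi) : \Psi \in BV_{\rm per}(Q),\ \Psi \ge 0\}$, and likewise for the attainment of the infimum. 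The three bullet points now follow one by one from the corresponding statements of Proposition \ref{minima1}: for $0<c<\oc$ the infimum is $-\infty$; for $c>\oc$ the infimum is $0$ and $F_c(\psi) = G_c(\Psi) > 0$ whenever $\psi \not\equiv -\infty$, i.e. $\Psi \not\equiv 0$; and for $c = \oc$ the minimizer $\Psi_{\oc}\not\equiv 0$ with $G_{\oc}(\Psi_{\oc}) = 0$ gives, setting $\psi := \frac1{\oc}\log(\oc\Psi_{\oc})$, a function $\psi \not\equiv -\infty$ with $e^{\oc\psi}\in BV_{\rm per}(Q)$ and $F_{\oc}(\psi) = 0$. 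The bounds $\int_Q g \le \oc \le \max_Q g$ are those of Proposition \ref{minima1}.
\end{proof}
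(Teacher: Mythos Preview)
Your proposal is correct and follows exactly the approach the paper intends: the paper's own proof is a single sentence (``Recalling \eqref{gf}, it is immediate to state the analogous result for the functional $F_c$''), and your argument simply spells out this immediacy via the bijection $\psi \mapsto e^{c\psi}/c$ and the positive one-homogeneity of $G_c$. Your care about the sign constraint on $\psi$ versus the range $[-\infty,+\infty)$ in the statement is well placed and correctly resolved by the homogeneity observation.
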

\begin{remark}\rm
Notice that Proposition \ref{minima1} and Corollary \ref{coro}, assuring the existence of generalized traveling waves solutions, 
 requires only $g\in L^\infty(Q)$.
\end{remark}
We now analyze the regularity of the minima of $F_\oc$ (or equivalently of $G_\oc$). 

We first give a geometric representation of the functional $F_c$ (cfr. \cite[Thm. 14.6]{giustibook}).
Given $c>0$ and $\Sigma\subset Q\times\R$ we define a weighted perimeter 
\begin{eqnarray} \label{perc}
\Per_c(\Sigma,\T^n\times \R):= 
\sup \left\{ \int_\Sigma e^{c z}\left( {\rm div}\phi(y,z)+c\phi_{n+1}(y,z)\right)dydz:\ \right. \\ \left. 
\phi\in \mc C^1_{\rm per}(Q\times\R;\R^{n+1}),
\, |\phi|^2\le 1\right\}.\nonumber
\end{eqnarray}  
Notice that, for all $\Sigma\subset Q\times\R$ of locally finite perimeter we have 
\[
\Per_c(\Sigma,\T^n\times \R) =
\int_{\partial^*\Sigma} e^{cz}\, d\mathcal H^n 
+ \int_\R e^{ct} \int_{\partial_0 Q} |\chi_\Sigma^Q(y)-\chi_\Sigma^Q(\sigma(y))| \,d\mathcal{H}^{n-1}(y)\,dt
\]
where $\sigma$ is as in \eqref{bvper}.

\begin{proposition}\label{psirepr} 
Let $\psi:Q\to [-\infty, +\infty)$ be such that $e^{c\psi}\in BV_{\rm per}(Q)$. Then
\begin{equation}\label{fcpc}
F_c(\psi) =  \mathcal F_c(\Sigma_\psi):=
\Per_c(\Sigma_\psi,\T^n\times \R)-\int_{\Sigma_\psi} e^{c z} g(y) \, dydz
\end{equation}
where $\Sigma_\psi:=\{(y,z)\in Q\times \R\ |\ z< \psi(y)\}$ is the epigraph of $\psi$.
\end{proposition}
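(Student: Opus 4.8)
My plan is to read \eqref{fcpc} as an equality between the weighted perimeter $\Per_c(\Sigma_\psi,\T^n\times\R)$ and a convex-duality representation of it, to strip off the volume term by Fubini, and then to prove the resulting identity for $\Per_c$ first on smooth functions and afterwards in general by approximation and lower semicontinuity. Since $\Sigma_\psi=\{(y,z):z<\psi(y)\}$ and $e^{-\infty}=0$, Fubini gives $\int_{\Sigma_\psi}e^{cz}g(y)\,dy\,dz=\int_Q g(y)\int_{-\infty}^{\psi(y)}e^{cz}\,dz\,dy=\tfrac1c\int_Q e^{c\psi(y)}g(y)\,dy$, which is exactly the volume term appearing both in \eqref{represent} and in \eqref{gf}. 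Recalling that $F_c(\psi)$ is given by the representation formula \eqref{represent}, the claim \eqref{fcpc} is therefore equivalent to
\[
\Per_c(\Sigma_\psi,\T^n\times\R)=\sup\Big\{\int_Q e^{c\psi(y)}\Big(\tfrac{{\rm div}\phi'(y)}{c}+\phi_{n+1}(y)\Big)\,dy:\ (\phi',\phi_{n+1})\in\mc C^1_{\rm per}(Q;\R^{n+1}),\ |\phi'|^2+\phi_{n+1}^2\le1\Big\},
\]
and this is the identity I would establish; the case $\psi\equiv-\infty$ is trivial, both sides being zero.

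For the inequality ``$\ge$'' I would take any admissible pair $(\phi',\phi_{n+1})$ on $Q$ and extend it to the $z$-independent field $\phi(y,z):=(\phi'(y),\phi_{n+1}(y))$, which lies in $\mc C^1_{\rm per}(Q\times\R;\R^{n+1})$ and still satisfies $|\phi|^2\le1$, hence is admissible in \eqref{perc}. Since $\partial_z\phi_{n+1}\equiv0$, the integrand $e^{cz}({\rm div}\phi+c\phi_{n+1})$ factors as $e^{cz}\big({\rm div}\phi'(y)+c\phi_{n+1}(y)\big)$; integrating first in $z\in(-\infty,\psi(y))$, where $e^{cz}$ is integrable, and using $\|{\rm div}\phi'\|_\infty+c<\infty$ and $e^{c\psi}\in L^1(Q)$ to justify the use of Fubini, one obtains $\int_{\Sigma_\psi}e^{cz}({\rm div}\phi+c\phi_{n+1})\,dy\,dz=\int_Q e^{c\psi}\big(\tfrac{{\rm div}\phi'}{c}+\phi_{n+1}\big)\,dy$. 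Taking the supremum over $(\phi',\phi_{n+1})$ yields ``$\ge$''.

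The reverse inequality ``$\le$'' is the substantive part. I would first treat $\psi\in\mc C^1_{\rm per}(Q)$: then $\partial^*\Sigma_\psi$ in $\T^n\times\R$ is the smooth compact graph of $\psi$, on which $\mathcal H^n=\sqrt{1+|D\psi|^2}\,dy$ and $z=\psi(y)$, while the $\partial_0 Q$ correction in the formula for $\Per_c$ recalled right after \eqref{perc} vanishes because $\psi(\sigma(y))=\psi(y)$ by periodicity; hence $\Per_c(\Sigma_\psi)=\int_Q e^{c\psi}\sqrt{1+|D\psi|^2}\,dy$, and comparison with \eqref{functionalu} gives $\mc F_c(\Sigma_\psi)=F_c(\psi)$ for smooth $\psi$. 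For a general $\psi$ with $u:=e^{c\psi}\in BV_{\rm per}(Q)$, I would mollify $u$ on the torus and add a vanishing positive constant to obtain smooth $\Z^n$-periodic $u_k>0$ with $u_k\to u$ in $L^1(Q)$ and $|Du_k|_{\rm per}(Q)\to|Du|_{\rm per}(Q)$ (strict convergence in $BV_{\rm per}(Q)$), and set $\psi_k:=\tfrac1c\log u_k\in\mc C^1_{\rm per}(Q)$. Strict convergence together with the continuity of the area-type functional $G_c$ under strict convergence (Reshetnyak's theorem, cf. \cite{AFP,giustibook}) gives $F_c(\psi_k)=G_c(u_k/c)\to G_c(u/c)=F_c(\psi)$. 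On the other hand $e^{c\psi_k}\to e^{c\psi}$ in $L^1(Q)$ forces $\chi_{\Sigma_{\psi_k}}\to\chi_{\Sigma_\psi}$ in $L^1_{\rm loc}(\T^n\times\R)$, and then lower semicontinuity of $\Per_c$ under this convergence, combined with the convergence of the volume term $\tfrac1c\int_Q e^{c\psi_k}g\,dy\to\tfrac1c\int_Q e^{c\psi}g\,dy$, gives $\mc F_c(\Sigma_\psi)\le\liminf_k\mc F_c(\Sigma_{\psi_k})=\liminf_k F_c(\psi_k)=F_c(\psi)$. Together with ``$\ge$'', this proves \eqref{fcpc}.

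The main obstacle is the reverse inequality, and within it two technical points: checking that for smooth periodic $\psi$ the weighted perimeter of the epigraph genuinely reproduces the exponentially weighted area --- in particular that the periodic $\partial_0 Q$ contribution drops out --- and guaranteeing that the approximants $\psi_k$ can be chosen simultaneously smooth, periodic, and strictly $BV$-convergent after the change of variables $u=e^{c\psi}$, which is what lets $G_c$, hence $F_c$, pass to the limit. The only feature of the lower semicontinuity of $\Per_c$ that is not completely standard is the weight $e^{cz}$, unbounded as $z\to+\infty$; it is harmless because $\Sigma_\psi$ lies below the essential supremum of $\psi$ and $\int_{\Sigma_\psi\cap\{z>M\}}e^{cz}\,dy\,dz=\tfrac1c\int_{\{\psi>M\}}\big(e^{c\psi}-e^{cM}\big)\,dy\to0$ as $M\to+\infty$ since $e^{c\psi}\in L^1(Q)$, so in \eqref{perc} one may restrict the test fields to those with compact support in $z$.
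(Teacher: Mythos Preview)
Your proposal is correct and follows essentially the same two-step strategy as the paper: the inequality $F_c(\psi)\le\mathcal F_c(\Sigma_\psi)$ comes from inserting $z$-independent competitors into the definition \eqref{perc} and comparing with the representation \eqref{represent}, while the reverse inequality is checked directly for $\psi\in\mc C^1_{\rm per}(Q)$ and then extended by relaxation (strict $BV$ approximation of $e^{c\psi}$ plus lower semicontinuity of $\Per_c$). The paper's proof is a three-line sketch citing \cite[Thm.~14.6]{giustibook}; you have simply filled in the details, including the Reshetnyak continuity for $G_c$ and the handling of the unbounded weight $e^{cz}$, so there is no genuine methodological difference.
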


\begin{proof} By exploiting formula \eqref{represent} and the definition of $\Per_c$ in \eqref{perc}, 
it is possible to check  that $ F_c(\psi) \leq \mathcal F_c(\Sigma_\psi)$. For the reverse inequality, we observe first of all that 
\eqref{fcpc}  holds  on smooth functions $\psi\in \mc C^1_{\rm per}(Q)$ and then  the inequality extends to all $\psi$'s by relaxation.
For a similar argument see \cite[Thm. 14.6]{giustibook}.
\end{proof}

\begin{lemma}\label{lemper}
Let $\psi:Q\to [-\infty, +\infty)$ be  a non trivial minimizer of $F_\oc$, then the epigraph 
$\Sigma_\psi$ of $\psi$ is a minimizer, under compact perturbations, 
of the functional $\mathcal F_\oc$ defined in \eqref{fcpc}. 
\end{lemma}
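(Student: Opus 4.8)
The plan is to transfer the minimality of $\psi$ for $F_\oc$ over the class of functions $\{\psi : e^{\oc\psi}\in BV_{\rm per}(Q)\}$ into the minimality of its epigraph $\Sigma_\psi$ for $\mathcal F_\oc$ over the class of sets obtained by compact perturbations, using the identification $F_\oc(\psi)=\mathcal F_\oc(\Sigma_\psi)$ from Proposition \ref{psirepr}. The one structural subtlety is that a compact perturbation of the epigraph need not be an epigraph, so the argument cannot be a one-line pullback; the main work is to reduce a general competitor set to an epigraph competitor without increasing the energy.

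First I would set up the competitor. Let $\Sigma$ be a set of locally finite perimeter in $Q\times\R$ with $\Sigma\,\triangle\,\Sigma_\psi\Subset Q\times\R$, so that $\Sigma$ and $\Sigma_\psi$ agree outside some cylinder $\omega\times(-R,R)$ with $\omega\Subset Q$; we want $\mathcal F_\oc(\Sigma_\psi)\le\mathcal F_\oc(\Sigma)$. The key observation is that the weighted perimeter $\Per_\oc$ with density $e^{\oc z}$ has the ``calibration in the vertical direction'' property: both the area term $\int_{\partial^*\Sigma}e^{\oc z}\,d\mathcal H^n$ and the volume term $\int_\Sigma e^{\oc z}g(y)\,dydz$ are monotone under vertical rearrangement, precisely because $e^{\oc z}$ is increasing in $z$. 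Concretely, replace $\Sigma$ by its vertical monotone rearrangement $\Sigma^\sharp:=\{(y,z): z<\psi^\sharp(y)\}$ where $\psi^\sharp(y):=\sup\{z: (y,z')\in\Sigma\ \text{for a.e.}\ z'<z\}$ (equivalently, the $1$-d rearrangement of each vertical slice, anchored at $+\infty$ so that the slice below stays filled). Since $\Sigma$ and $\Sigma_\psi$ coincide outside $\omega\times(-R,R)$ and $\Sigma_\psi$ is already an epigraph, $\Sigma^\sharp$ is still a compact perturbation of $\Sigma_\psi$ and is the epigraph of some $\psi^\sharp$ with $e^{\oc\psi^\sharp}\in BV_{\rm per}(Q)$ (one must check the trace/periodic-perimeter term behaves correctly, which it does because the vertical slices on $\partial_0 Q$ are rearranged consistently on both faces). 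The Polya--Szegő–type inequality for the weighted area functional gives $\Per_\oc(\Sigma^\sharp,\T^n\times\R)\le\Per_\oc(\Sigma,\T^n\times\R)$, while the increasing weight $e^{\oc z}$ gives $\int_{\Sigma^\sharp}e^{\oc z}g\,dydz\ge\int_\Sigma e^{\oc z}g\,dydz$ on the set where $g\ge 0$ — here I would be careful and instead argue slicewise with $g(y)$ fixed: on each slice, moving mass up (toward larger $z$, larger $e^{\oc z}$) when $g(y)>0$ and keeping the epigraph structure is exactly what the rearrangement does, and where $g(y)\le 0$ one must check monotonicity the other way; cleanest is to observe that among all sets with a prescribed vertical slice measure, the epigraph is the one minimizing $\Per_\oc$ and simultaneously the energy $-\int e^{\oc z}g$ is determined only by the slice measures together with their ``height,'' so rearranging to the epigraph can only lower $\mathcal F_\oc$. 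Hence $\mathcal F_\oc(\Sigma^\sharp)\le\mathcal F_\oc(\Sigma)$, and it suffices to prove minimality against epigraph competitors.

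Then I would finish by the pullback: if $\Sigma^\sharp=\Sigma_{\widetilde\psi}$ is an epigraph competitor, then $e^{\oc\widetilde\psi}\in BV_{\rm per}(Q)$ and $e^{\oc\widetilde\psi}=e^{\oc\psi}$ outside $\omega$, so $\widetilde\psi$ is an admissible competitor for the minimization of $F_\oc$; by Proposition \ref{psirepr}, $\mathcal F_\oc(\Sigma^\sharp)=F_\oc(\widetilde\psi)\ge F_\oc(\psi)=\mathcal F_\oc(\Sigma_\psi)$, using that $\psi$ is a (global, hence a fortiori local-perturbation) minimizer of $F_\oc$. Chaining the two inequalities yields $\mathcal F_\oc(\Sigma_\psi)\le\mathcal F_\oc(\Sigma)$ for every compact perturbation $\Sigma$, which is the claim.

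\textbf{Main obstacle.} The delicate point is the vertical rearrangement step: making precise that the monotone rearrangement of an arbitrary finite-perimeter competitor $\Sigma$ (i) remains a compact perturbation of $\Sigma_\psi$, (ii) is genuinely an epigraph of a function $\widetilde\psi$ with $e^{\oc\widetilde\psi}\in BV_{\rm per}(Q)$ including the correct behavior of the boundary trace term in \eqref{bvper}, and (iii) does not increase $\mathcal F_\oc$ — where the area part needs a weighted Polya--Szegő inequality and the volume part needs the slicewise monotonicity in $z$ dictated by the sign of $g(y)$. Once this structural reduction is in hand, the rest is the straightforward pullback through Proposition \ref{psirepr}.
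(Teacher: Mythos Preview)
Your overall architecture matches the paper's: reduce an arbitrary set competitor to a function competitor, then invoke the minimality of $\psi$ for $F_\oc$ together with Proposition~\ref{psirepr}. The gap is entirely in the reduction step.

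Your vertical rearrangement is not well posed. The slices $\Sigma_y$ of a compact perturbation of $\Sigma_\psi$ contain a half-line $(-\infty,-R)$, so they have infinite Lebesgue measure and a ``measure-preserving rearrangement into $(-\infty,\psi^\sharp(y))$'' does not make sense. Your explicit formula $\psi^\sharp(y)=\sup\{z: (y,z')\in\Sigma\text{ for a.e. }z'<z\}$ is not a rearrangement at all: it simply discards every connected component of $\Sigma_y$ above the first gap, so $\Sigma^\sharp\subset\Sigma$. In that case, where $g>0$ one has $-\int_{\Sigma^\sharp}e^{\oc z}g\ge -\int_\Sigma e^{\oc z}g$, the \emph{wrong} direction, and there is no reason the perimeter drop compensates. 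Your own discussion of the volume term (first arguing by sign of $g(y)$, then retreating to a vague ``slice measures together with their height'' statement) reflects this: the inequality $\mathcal F_\oc(\Sigma^\sharp)\le\mathcal F_\oc(\Sigma)$ is not established.

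The paper's device dissolves the difficulty. Given a competitor $F$, set
\[
\frac{e^{\oc\psi_F(y)}}{\oc}\ :=\ \int_{F_y} e^{\oc z}\,dz,
\]
i.e.\ rearrange each slice with respect to the \emph{weighted} measure $e^{\oc z}dz$ (which is finite on $F_y$). Because $g$ is $z$-independent, the volume term becomes an \emph{exact equality},
\[
\int_F e^{\oc z}g(y)\,dydz=\int_Q \frac{e^{\oc\psi_F(y)}}{\oc}\,g(y)\,dy,
\]
so no sign discussion of $g$ is needed. The perimeter inequality $\Per_\oc(F)\ge$ (area part of $F_\oc(\psi_F)$) then drops out immediately by testing the sup in \eqref{perc} with $z$-independent vector fields and comparing with the representation \eqref{represent}. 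Chaining gives $\mathcal F_\oc(F)\ge F_\oc(\psi_F)\ge F_\oc(\psi)=\mathcal F_\oc(\Sigma_\psi)$, which is the lemma. In short: your Polya--Szeg\H{o} instinct is right, but the rearrangement must be taken with respect to $e^{\oc z}dz$; once you do that, the ``main obstacle'' you flag evaporates.
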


\begin{proof}
We reason as in \cite[Thm. 14.9]{giustibook}.
Given $F\subset Q\times \R$ such that $\int_F e^{\oc z}dydz<+\infty$, we consider
$\psi_F:Q\to [-\infty,+\infty)$ be such that 
\[
\frac{e^{\oc \psi_F(y)}}{\oc} = \int_{-\infty}^{\psi_F(y)}e^{\oc z}dz
=\int_{F_y}e^{\oc z}dz \qquad {\rm for\ a.e.\ }y\in Q,
\]
where $F_y:=\{ z\in\R:\, (y,z)\in F\}$. Observe that, by   definition,  $e^{\oc\psi_F}\in BV(Q)$ and  
\begin{equation}\label{eqg}
\int_{F} e^{\oc z} g(y) \, dydz = \int_Q e^{\oc \psi_F(y)} \frac{g(y)}{\oc} \, dy.
\end{equation}
Moreover, by definition of $\Per_\oc$, for all $\phi=(\phi',\phi_{n+1})\in \mc C^1_{\rm per}(Q;\R^{n+1})$
we have
\begin{equation}\label{eqper}
\Per_\oc(F,\T^n\times \R) \ge 
\int_F e^{\oc z}\left( {\rm div}\phi'+\oc\phi_{n+1}\right)dydz = 
\int_Q e^{\oc \psi_F}\left( \frac{{\rm div}\phi'}{\oc}+\phi_{n+1}\right)dy.
\end{equation}
By taking the supremum over all $\phi$'s in \eqref{eqper},  and using the representation formula \eqref{represent} and  \eqref{eqg}, we then get 
\[
\mathcal F_\oc(F) \ge F_\oc(\psi_F) \ge F_\oc(\psi) = \mathcal F_\oc(\Sigma_\psi)
\]
where the last equality follows from Proposition \ref{psirepr}, thus proving the claim.
\end{proof}

Notice that if $\Sigma$ is a minimizer of $\mathcal F_\oc$, then $\Sigma + (0,z)$ is also a minimizer for all $z\in\R$, that is, 
the class of minimizers is invariant by vertical shifts.
Reasoning as in \cite[Prop. 5.14]{giustibook} (see also \cite{Amb97})
one can prove a density estimate for minimizers of $\mathcal F_\oc$.

\begin{lemma}
There exist constants $\lambda,r_0>0$, depending only on $n$ and $\|g\|_\infty$, such that 
for all minimizers $\Sigma$ of $\mathcal F_\oc$, $x\in\Sigma$ and $r\in (0,r_0)$ the following density estimate holds:
\begin{equation}\label{eqdens}
|\Sigma\cap B_r(x)|\ge \lambda\, r^{n+1}.
\end{equation}
\end{lemma}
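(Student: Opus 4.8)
The plan is to prove the density estimate \eqref{eqdens} by the standard comparison-with-balls argument for almost-minimizers of perimeter, adapted to the weighted setting. The key point is that the exponential weight $e^{cz}$ and the volume term $\int_\Sigma e^{cz}g(y)\,dydz$ are lower-order perturbations of the ordinary perimeter on small balls, so that $\Sigma$ is a $(\Lambda,r_0)$-minimizer of the usual perimeter with $\Lambda$ and $r_0$ controlled by $n$ and $\|g\|_\infty$, and the classical density estimate applies.

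First I would fix $x\in\Sigma$, which up to a vertical translation (using the shift-invariance of $\mathcal F_\oc$ noted just before the statement) we may place at height $z=0$, and restrict to $r<r_0$ with $r_0$ small enough that $B_r(x)\Subset Q\times\R$; on $B_{r_0}(x)$ the weight satisfies $e^{-cr_0}\le e^{cz}\le e^{cr_0}$, so it is comparable to $1$ with constants as close to $1$ as we wish. Then I would compare $\Sigma$ with $\Sigma\setminus B_r(x)$: by minimality under compact perturbations,
\[
\Per_\oc(\Sigma,\T^n\times\R)-\int_\Sigma e^{\oc z}g\le \Per_\oc(\Sigma\setminus B_r(x),\T^n\times\R)-\int_{\Sigma\setminus B_r(x)} e^{\oc z}g.
\]
Writing $m(r):=|\Sigma\cap B_r(x)|$, the weighted volume term is bounded by $e^{cr_0}\|g\|_\infty\,m(r)$, and the difference of weighted perimeters is estimated from above and below using the comparability of $e^{cz}$ with $1$ together with the elementary slicing/cutting identity
\[
\Per(\Sigma\setminus B_r(x))\le \Per(\Sigma;\,\R^{n+1}\setminus\overline{B_r(x)})+\mathcal H^n(\Sigma\cap\partial B_r(x)),
\]
while $\Per(\Sigma;B_r(x))$ is bounded below by $\mathcal H^n(\partial B_r(x)\cap\Sigma)$ via the isoperimetric inequality applied to $\Sigma\cap B_r(x)$ (the part of its boundary inside $B_r$ has $\mathcal H^n$-measure at least $\Per(\Sigma;B_r(x))$, and the part on $\partial B_r$ equals $\mathcal H^n(\Sigma\cap\partial B_r(x))$). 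Combining these yields a differential inequality of the form
\[
c_n\,m(r)^{\frac{n}{n+1}}\le \mathcal H^n(\Sigma\cap\partial B_r(x))+C\,m(r)=m'(r)+C\,m(r)\qquad\text{for a.e. }r\in(0,r_0),
\]
with $C=C(n,\|g\|_\infty)$, after absorbing the weight factors (shrinking $r_0$ so that $e^{cr_0}\le 2$, say). Since $m(r)>0$ for all $r>0$ because $x\in\Sigma$ (and $\Sigma$ may be assumed to have been modified on a null set so that it coincides with its points of positive density), for $r_0$ small the term $C\,m(r)$ is absorbed into the left-hand side, giving $m'(r)\ge \tfrac{c_n}{2}m(r)^{n/(n+1)}$, and integrating this ODE inequality from $0$ to $r$ produces $m(r)\ge\lambda r^{n+1}$ with $\lambda=\lambda(n,\|g\|_\infty)$.

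The main obstacle is purely bookkeeping: making sure that the weighted perimeter $\Per_\oc$, which includes the extra boundary term from the periodicity identification on $\partial_0 Q$, genuinely reduces to ordinary perimeter on a small ball $B_r(x)\Subset Q\times\R$ — this is where choosing $r_0$ smaller than the distance of $x$ to the "seam" $\partial_0 Q\times\R$ matters, and one should note this can be done uniformly only because the estimate is local; for $x$ near the seam one instead unfolds by periodicity and works in $\T^n\times\R$ directly, where $\Per_\oc$ is literally the $e^{cz}$-weighted Hausdorff measure of $\partial^*\Sigma$ and the same argument runs verbatim. A secondary technical point is justifying the replacement $\mathcal H^n(\Sigma\cap\partial B_r(x))=m'(r)$ for a.e. $r$, which is the coarea formula applied to the distance function from $x$; this is standard. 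All of this is routine and modeled on \cite[Prop. 5.14]{giustibook} and \cite{Amb97}, as indicated, so I would not reproduce the computations in detail but only point to the comparison competitor and the resulting ODE inequality.
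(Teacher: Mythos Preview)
Your proposal is correct and follows exactly the route the paper indicates: the paper gives no proof of this lemma beyond the sentence ``Reasoning as in \cite[Prop.~5.14]{giustibook} (see also \cite{Amb97}) one can prove a density estimate for minimizers of $\mathcal F_\oc$,'' and your sketch is precisely the comparison-with-balls/ODE argument from those references, adapted to the weighted perimeter via the shift-invariance and the comparability of $e^{\oc z}$ with $1$ on small balls. One cosmetic remark: the sentence ``$\Per(\Sigma;B_r(x))$ is bounded below by $\mathcal H^n(\partial B_r(x)\cap\Sigma)$ via the isoperimetric inequality'' is garbled---minimality gives the \emph{upper} bound $\Per(\Sigma;B_r)\le \mathcal H^n(\Sigma\cap\partial B_r)+Cm(r)$, while isoperimetry gives $c_n m(r)^{n/(n+1)}\le \Per(\Sigma;B_r)+\mathcal H^n(\Sigma\cap\partial B_r)$---but combining these yields your displayed differential inequality (with a harmless factor of $2$ on $m'(r)$), so the argument is sound.
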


\begin{proposition}\label{psireg}
Let $\psi:Q\to [-\infty, +\infty)$ be  a non trivial minimizer of $F_\oc$.
Then $\Gamma_\psi:=\partial\Sigma_\psi$ is a $\mathcal{C}^{2+\alpha}$ hypersurface for all $\alpha<1$, out of a closed singular set 
$S_\psi\subset \Gamma_\psi$
of  Hausdorff dimension  at most   $n-7$.
Moreover, letting $E_\psi:=\Pi_{\R^n}(\Gamma_\psi\setminus S_\psi)$ the projection onto $\R^n$ of $\Gamma_\psi\setminus S_\psi$, we have 
that 
\begin{enumerate}
\item $E_\psi$ is a open set and $E_\psi = {\rm int}(\overline{E_\psi})={\rm int}(\Pi_{\R^n}\Gamma_\psi)$,
\item $\psi\equiv -\infty$ a.e. on $Q\setminus E_\psi$,
\item $\psi\in \mathcal{C}^{2+\alpha}_{\rm loc}(E_\psi)$ for all $\alpha<1$,
\item $\psi$ solves  \eqref{equ2} in $E_\psi$ with boundary conditions \eqref{singularbc}.
\end{enumerate}
Finally, letting $\tilde\psi$ another minimizer of $F_\oc$, 
for every connected component $E_i$ of $E_\psi$ there exists $k_i\in\R$ such that $\tilde\psi= \psi+k_i$.
\end{proposition}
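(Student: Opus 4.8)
The plan is to exploit the minimality of the epigraph $\Sigma_\psi$ under compact perturbations (Lemma~\ref{lemper}) together with the regularity theory of Theorem~\ref{regularity}, and then to read off the listed properties of $E_\psi$ and $\psi$ from the structure of the minimal hypersurface $\Gamma_\psi=\partial\Sigma_\psi$. First I would observe that, by the geometric representation \eqref{fcpc}, $\Sigma_\psi$ is a local minimizer of a weighted-perimeter-plus-volume functional; rewriting the weight $e^{cz}$ as a smooth positive density, this is (locally) equivalent to minimizing a standard anisotropic perimeter with a bounded prescribed-mean-curvature term, whose $L^\infty$ bound depends only on $c$ and $\|g\|_\infty$. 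Hence Theorem~\ref{regularity} applies: $\Gamma_\psi$ is $\mathcal C^{1+\alpha}$ away from a closed singular set $S_\psi$ with $\mathcal H^k(S_\psi)=0$ for $k>n-7$ (the ambient dimension here being $n+1$, so the critical dimension drops by one relative to the $n-8$ threshold for $\partial^\star$ vs $\partial$), and by elliptic bootstrap on the prescribed-mean-curvature equation $\mathcal C^{1+\alpha}$ improves to $\mathcal C^{2+\alpha}$ where the surface is a graph. The density estimate \eqref{eqdens} is what guarantees $S_\psi$ is genuinely lower-dimensional and that $\overline{\Sigma_\psi}\setminus\Sigma_\psi$ has no interior, which I will use below.

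Next I would establish items (1)--(4). Since $\Sigma_\psi$ is an epigraph it is invariant under downward vertical translations, and this forces every connected piece of $\Gamma_\psi\setminus S_\psi$ to be locally a graph over $\R^n$ with $\mathcal C^{2+\alpha}$ profile; call $E_\psi$ the union of the projections. That $\psi\in\mathcal C^{2+\alpha}_{\rm loc}(E_\psi)$ and solves \eqref{equ2} in $E_\psi$ then follows because the mean curvature of the graph of $\psi$ computed from the first variation of $\mathcal F_\oc$ is exactly $g(y)-\oc/\sqrt{1+|D\psi|^2}$. The identity $E_\psi={\rm int}(\overline{E_\psi})={\rm int}(\Pi_{\R^n}\Gamma_\psi)$ uses the density estimate: a point on $\partial E_\psi$ over which $\psi$ were finite and smooth would make $\Gamma_\psi$ a graph there, contradicting maximality of $E_\psi$, and conversely the projection of the singular set cannot have interior because $S_\psi$ is too small. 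For (2), on $Q\setminus E_\psi$ the slice $(\Sigma_\psi)_y$ is either empty or all of $\R$; the latter is excluded because $e^{\oc\psi}\in BV_{\rm per}(Q)$ would fail (it would not be locally integrable) and because $\psi<+\infty$, so $\psi\equiv-\infty$ there. The boundary condition \eqref{singularbc} is precisely the statement that $\Gamma_\psi$ has no horizontal "wall" over $\mathcal H^{n-1}$-a.e. boundary point, which again follows from the density estimate applied at points of $\partial E_\psi$.

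Finally, for the rigidity statement about two minimizers $\psi,\tilde\psi$: fix a connected component $E_i$ of $E_\psi$ and set $k_i:=\sup_{E_i}(\tilde\psi-\psi)$ (finite or $+\infty$ a priori). If $k_i<+\infty$, then the epigraphs $\Sigma_{\tilde\psi}$ and $\Sigma_\psi+(0,k_i)$ are both minimizers of $\mathcal F_\oc$ (minimality is translation-invariant), they are ordered, and they touch; the strong maximum principle for the (locally uniformly elliptic) minimal-surface-type equation \eqref{equ2}, valid in the interior where both are smooth graphs, forces them to coincide on the connected component containing the contact point, i.e. $\tilde\psi=\psi+k_i$ on $E_i$. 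If instead $k_i=+\infty$ I would derive a contradiction by a sliding argument: translating $\Sigma_{\tilde\psi}$ downward it must at some finite height first touch $\Sigma_\psi$ from above over $E_i$ (using that $\psi$ is bounded above on compact subsets of $E_i$ and the density/boundary behaviour prevents escape to the lateral boundary without contact), again yielding coincidence by the strong maximum principle.

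The main obstacle I expect is the rigidity step: making the "touching implies coinciding" argument rigorous requires knowing that contact occurs at an \emph{interior} regular point of \emph{both} surfaces — one must rule out first contact occurring only on the singular set $S_\psi$ or escaping to $\partial E_i$ where $\psi\to-\infty$. Handling this cleanly likely needs the convergence/normal-continuity part of Theorem~\ref{regularity} (to pass to limits of the shifted minimizers) together with a careful use of the density estimate \eqref{eqdens} near $\partial E_\psi$ to confine the contact set to the regular graph region; the interior strong maximum principle itself is then standard.
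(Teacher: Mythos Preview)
Your treatment of the regularity assertions (the bound on the singular set and items (1)--(4)) follows the paper's line: invoke Lemma~\ref{lemper}, apply the prescribed-curvature regularity theory to $\Gamma_\psi$, and use the subgraph structure together with the density estimate \eqref{eqdens} to obtain the graphical description and boundary behaviour. The paper phrases the key step $\nu_{n+1}\ne 0$ on $\Gamma_\psi\setminus S_\psi$ by citing \cite[Prop.~14.11, Thm.~14.13]{giustibook} rather than arguing directly from vertical monotonicity, but the strategy is the same as yours.

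For the rigidity assertion the paper takes a different and much shorter route. Rather than sliding plus the strong maximum principle, it uses the \emph{convexity of $G_\oc$} in the variable $\Psi=e^{\oc\psi}/\oc$: since $\Psi$ and $\tilde\Psi$ both minimize the convex functional $G_\oc$ with value $0$, so does every convex combination, and equality in the convexity inequality for the integrand $\sqrt{\oc^2\Psi^2+|D\Psi|^2}$ forces $(\oc\Psi,D\Psi)$ and $(\oc\tilde\Psi,D\tilde\Psi)$ to be parallel a.e., i.e.\ $\Psi\,D\tilde\Psi=\tilde\Psi\,D\Psi$ on $E_\psi\cap E_{\tilde\psi}$. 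Unwinding the exponential change of variables gives $D\psi=D\tilde\psi$ there, so $\tilde\psi-\psi$ is locally constant. This argument is purely variational and never requires locating a first touching point, so the obstacle you anticipate (contact on $S_\psi$ or escape to $\partial E_i$) simply does not arise.

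Your sliding scheme also has a gap beyond the one you flag. If a component $E_i$ of $E_\psi$ and a component of $E_{\tilde\psi}$ overlap without one containing the other, then near points of $\partial E_i$ lying inside $E_{\tilde\psi}$ one has $\psi\to-\infty$ while $\tilde\psi$ stays bounded, so $\sup_{E_i}(\tilde\psi-\psi)=+\infty$; symmetrically the infimum is $-\infty$. In this configuration there is \emph{no} finite height at which a vertical translate of $\Sigma_{\tilde\psi}$ first touches $\Sigma_\psi$ over $E_i$, and the contradiction you sketch for the $k_i=+\infty$ case does not go through. Rescuing the sliding approach would require first proving that overlapping components coincide, which is exactly the conclusion sought; the convexity argument delivers this directly.
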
 

\begin{proof}
By Lemma \ref{lemper} $\Sigma_\psi$ is a minimizer of $\mathcal F_\oc$ under compact perturbations. 
Classical results about regularity of minimal surfaces with prescribed curvature
\cite{m,Amb97} then imply that $\Gamma_\psi$
is $\mathcal{C}^{2+\alpha}$ for all $\alpha<1$, out of a closed singular set $S_\psi$
of Hausdorff dimension at most  $n-7$.

Recalling that $g$ is Lipschitz continuous and $\Per_\oc(\Sigma_\psi,\T^n\times \R)<+\infty$, 
we can 
reason as in \cite[p. 168 and Prop. 14.11]{giustibook} (see also \cite{giusti})
to obtain that $\nu_{n+1}\ne 0$ on $\Gamma_\psi\setminus S_\psi$, 
where $\nu=(\nu_1,\ldots,\nu_{n+1})$ denotes the exterior unit normal to $\Sigma_\psi$.
Reasoning as in \cite[Thm. 14.13]{giustibook} it then follows 
$E_\psi = {\rm int}(\overline{E_\psi})={\rm int}(\Pi_{\R^n}\Gamma_\psi)$ and 
$\psi\in \mathcal{C}^{2+\alpha}_{\rm loc}(E_\psi)$.
{}From the density estimate  \eqref{eqdens} we can derive that
$\psi\le C$ for some $C>0$, using the same argument as in Thm 14.10, \cite{giustibook}. 
So,  this implies that $\psi$ solves  \eqref{equ2} in $E_\psi$ with boundary conditions \eqref{singularbc}.

To prove the last assertion we notice that, letting 
$\tilde\psi$ be another minimum of $F_\oc$, by convexity we have
$F_\oc (\lambda \psi+(1-\lambda)\tilde\psi)=0$ for every $\lambda \in [0,1]$. 
By definition of $F_\oc$ we then get
\[
0=F_\oc\left(\lambda \psi+( 1-\lambda)\tilde\psi\right)= 
\lambda F_\oc(\psi)+(1-\lambda)F_\oc\left(\tilde\psi\right)
\] 
if and only if 
\[
\psi \,D\tilde\psi =\tilde\psi \,D\psi \qquad {\rm on\ }E_\psi\cap E_{\tilde\psi},
\]
which implies the assertion.
\end{proof}

\begin{remark}\rm
Integrating \eqref{equ2} on $E_\psi$ and using \eqref{singularbc} we obtain 
\begin{equation}\label{per} 
{\rm{Per}}(E_\psi,\T^n) = -\int_{E_\psi} {\rm{div }} 
\left(\frac{D\psi}{\sqrt{1+|D\psi|^2}}\right)dy = 
\int_{E_\psi}  \left( g(y) -\frac{\oc}{\sqrt{1+|D\psi (y)|^2}}\right)dy ,
\end{equation} 
which implies that $E_\psi$ has finite perimeter.
\end{remark}
\begin{corollary}\label{corobc}
Let $\psi$ as in Proposition \ref{psireg}. Then $\psi$ satisfies  the boundary conditions \eqref{singularbc2} on $\partial E_\psi$.
\end{corollary}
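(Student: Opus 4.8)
Since $\psi$ is bounded above by Proposition \ref{psireg} and $\phi\in\mathcal C^1_{\rm per}(\overline Q)$ is bounded, the function $\phi-\psi$ --- read as $+\infty$ on $Q\setminus E_\psi$ --- is bounded below. The plan is to pass to the upper semicontinuous representative $\psi(y):=\sup\{z:(y,z)\in\overline{\Sigma_\psi}\}$, which does not change the values on the open set $E_\psi$ (where $\psi\in\mathcal C^2$), remains bounded above, and equals $-\infty$ on $Q\setminus\overline{E_\psi}$. Then $\phi-\psi$ is lower semicontinuous on the torus, hence attains its finite minimum $m$. If some minimum point lies in $E_\psi$ we are done, so I would argue by contradiction and assume that every minimum point $x_0$ lies on $\partial E_\psi$; at such a point $\psi(x_0)=\phi(x_0)-m=:a_0\in\R$, and it suffices to reach a contradiction.

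Fix one such $x_0$ and put $p_0:=(x_0,a_0)$. The first step is to observe that $p_0\in\Gamma_\psi=\partial\Sigma_\psi$: upper semicontinuity gives $y_k\in E_\psi$, $y_k\to x_0$, $\psi(y_k)\to a_0$, so $p_0$ is a limit of points of $\Sigma_\psi$, while no $(x_0,z)$ lies in the open set $\Sigma_\psi$ because, by Proposition \ref{psireg}(1), $E_\psi={\rm int}(\overline{E_\psi})$ and hence every neighbourhood of $x_0$ meets the open set $Q\setminus\overline{E_\psi}$ on which $\psi\equiv-\infty$. The second step records the barrier: since $\psi\le\phi-m$ everywhere, the epigraph satisfies $\Sigma_\psi\subseteq\widehat\Sigma:=\{(y,z):z<\phi(y)-m\}$ and touches $\partial\widehat\Sigma$ at $p_0$; and because $\phi$ is differentiable at $x_0$, the blow-ups of $\widehat\Sigma$ at $p_0$ converge to the half-space $H=\{\xi:\xi\cdot\nu_T\le0\}$ with $\nu_T=(-D\phi(x_0),1)/\sqrt{1+|D\phi(x_0)|^2}$ \emph{non-vertical}.

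The core is then a blow-up argument. By Lemma \ref{lemper}, $\Sigma_\psi$ minimizes $\mathcal F_\oc$ under compact perturbations; since the weight $e^{\oc z}$ is smooth and locally bounded away from $0$ and $\infty$ and $\int_\Sigma e^{\oc z}g$ is a lower-order volume term, $\Sigma_\psi$ is an almost-minimizer of the area, so a subsequence of its rescalings at $p_0$ converges to a set $\Sigma_\infty$ whose boundary is an area-minimizing cone with $0\in\partial\Sigma_\infty$ (the lower density bound comes from \eqref{eqdens}, the complementary one from $\Sigma_\psi\subseteq\widehat\Sigma$). From $\Sigma_\psi\subseteq\widehat\Sigma$ we get $\Sigma_\infty\subseteq H$. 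The decisive fact is that an area-minimizing cone of $\R^{n+1}$ contained in a half-space must coincide with the bounding hyperplane --- trivial when $n\le7$, since then minimizing boundaries are smooth and a minimizing hyperplane is rigid, and in general a consequence of the strong maximum principle for stationary varifolds. Hence $\partial\Sigma_\infty=\partial H=:T$ is a hyperplane, so by the regularity theory invoked in Proposition \ref{psireg} the point $p_0$ is regular: $p_0\notin S_\psi$ and $\Gamma_\psi$ is, near $p_0$, a $\mathcal C^{2+\alpha}$ hypersurface tangent to $T$. As $T$ is non-vertical, this piece of $\Gamma_\psi$ is the graph of a $\mathcal C^{2+\alpha}$ function over a neighbourhood $U$ of $x_0$ in $\R^n$, so $U\subseteq\Pi_{\R^n}\Gamma_\psi$ and $x_0\in{\rm int}(\Pi_{\R^n}\Gamma_\psi)=E_\psi$ by Proposition \ref{psireg}(1), contradicting $x_0\in\partial E_\psi$.

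This contradiction shows that $\phi-\psi$ attains its minimum at a point of $E_\psi$, which is \eqref{singularbc2}; the reduction to $\overline Q$ is harmless because $\psi$ and $\phi$ are $\Z^n$-periodic. I expect the genuine obstacle to be exactly the rigidity of the blow-up cone trapped in the half-space: in low dimensions it is elementary, but for general $n$ it rests on the maximum-principle classification of minimal cones. An alternative route, closer to \cite{giustibook}, would replace the blow-up by a direct comparison of $\Sigma_\psi$ with the sub/supergraph of a slightly perturbed affine function built to be a strict barrier for the prescribed curvature $g$ near $x_0$; this bypasses the cone classification at the cost of heavier computations.
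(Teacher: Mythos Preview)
Your proof is correct and follows essentially the same blow-up strategy as the paper: both touch $\Sigma_\psi$ from above by the subgraph of $\phi$, blow up at the contact point, and use that a minimizing cone contained in a half-space must equal that half-space. The only difference is organizational---the paper first observes that $p_0\in S_\psi$ (since points of $\Gamma_\psi\setminus S_\psi$ project into $E_\psi$) and then contradicts this with $C=H$, whereas you run the blow-up first and deduce regularity of $p_0$; your write-up is in fact more explicit than the paper's about the rigidity step $C\subseteq H\Rightarrow C=H$, which the paper asserts without justification.
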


\begin{proof}
Let $\phi\in \mathcal{C}^1_{\rm per}(Q)$. 
By Proposition \ref{psireg}, $\min_{\overline{Q}} (\phi-\psi)= \min_{\overline E_\psi } (\phi-\psi)$. 
Assume  by contradiction that $\phi-\psi$ attains its minimum at $y_0\in \partial E_\psi$. 
Without loss of generality, we can assume that $z_0:=\phi(y_0)=\psi(y_0)$ and that $\phi(y)-\psi(y)>0$ for every $y\neq y_0$. 
Again by Proposition \ref{psireg}, we have $x_0:=(y_0, z_0)\in S_\psi$, where $S_\psi$ is the singular set of $\Gamma_\psi$.
 
Let us now blow-up the sets $\Sigma_\psi$ and the subgraph $\Sigma_\phi$ of $\phi$ around $x_0$
If we let 
\begin{eqnarray*}
\Sigma_\psi^s&:=&\{x\in\R^{n+1}|\, sx\in \Sigma_\psi- x_0\}
\\ 
\Sigma_\phi^s&:=&\{x\in\R^{n+1}|\, sx\in \Sigma_\phi- x_0\},
\end{eqnarray*}
by standard arguments of the theory of minimal surfaces \cite[Chapter 9]{giustibook}, 
one can prove that along a subsequence $s_i\to 0$, $\Sigma_\phi^{s_i}$ converges to a half-space $H\subset\R^{n+1}$, 
and $\Sigma_\psi^{s_i}$ converges to a minimal cone $C$. 
{}From the inclusion $\Sigma_\psi\subseteq\Sigma_\phi$ it follows
$C\subseteq H$, but this implies that $C=H$, thus leading to a contradiction 
since the cone $C$ is singular.
\end{proof} 

We now define the maximal support $E$ for minima of the functional $F_\oc$, and  study the regularity of such set.

\begin{proposition} \label{support} 
There exists a  set $E=\cup_{i=1}^k E_i\subseteq Q$, where $E_i$ are connected components, 
such that the support of every minimum $\psi$ of $F_\oc$ is given by the union of some connected components of $E$.   

In particular, if $E$ is connected, then there exists a unique nontrivial minimizer $\psi$ of $F_\oc$, up to an additive constant.

Moreover, there exists a closed set $S\subset\partial E$ such that $\partial E\setminus S$ is a 
$\mathcal{C}^{2+\alpha}$ hypersurface, with $\mathcal{H}^{\gamma} (S)=0$ for every $\gamma>n-8$, and satisfies
the geometric equation
\begin{equation}\label{eqgeomk}
\kappa = g   \qquad \text{\rm on\ }\partial E\setminus S.
\end{equation}

\end{proposition}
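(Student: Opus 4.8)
The plan is to define $E$ as the union of the supports $E_\psi$ of all nontrivial minimizers $\psi$ of $F_\oc$, and to show this union is finite and has the stated boundary regularity. First I would observe, using Proposition \ref{psireg}, that whenever $\psi,\tilde\psi$ are two nontrivial minimizers, on every connected component $E_i$ of $E_\psi$ one has $\tilde\psi=\psi+k_i$ on $E_i\cap E_{\tilde\psi}$; combined with the maximum-support property this will show that the supports of distinct minimizers overlap only by sharing entire connected components. Hence the collection of all connected components arising as pieces of some $E_\psi$ is well-defined, and $E$ is their union. To see that this collection is finite, I would use the density estimate \eqref{eqdens}: each connected component $E_i$ carries a piece of $\Gamma_\psi$, and the lower volume bound $|\Sigma_\psi\cap B_r(x)|\ge\lambda r^{n+1}$ together with the periodicity (so everything lives in $\T^n\times\R$ modulo vertical shifts) and the perimeter bound \eqref{per} forces a uniform lower bound on $|E_i|$, so only finitely many components can fit inside $Q$. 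This gives $E=\cup_{i=1}^k E_i$, and the claim that each minimizer is supported on a union of the $E_i$, with the uniqueness-up-to-constant statement when $E$ is connected following immediately from Proposition \ref{psireg}.

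For the boundary regularity of $E$, the idea is to transfer the known regularity of $\Gamma_\psi=\partial\Sigma_\psi$ (Proposition \ref{psireg}: $\mathcal{C}^{2+\alpha}$ off a singular set of dimension $\le n-7$) down to $\partial E$ via the vertical projection $\Pi_{\R^n}$. By Proposition \ref{psireg} we have $E=E_\psi={\rm int}(\Pi_{\R^n}\Gamma_\psi)$ for a minimizer $\psi$ whose support is $E$ (take $\psi$ with maximal support, i.e.\ supported on all of $E$ — such $\psi$ exists by summing appropriately chosen translates of minimizers on each component). Near a point $y_0\in\partial E$ with $(y_0,z_0)\in\Gamma_\psi\setminus S_\psi$, the boundary condition \eqref{singularbc} (i.e.\ $\psi\to-\infty$ at $\partial E$) together with $\nu_{n+1}\ne0$ on $\Gamma_\psi\setminus S_\psi$ means that $\Gamma_\psi$ becomes asymptotically vertical over $\partial E$; more precisely, along $\Gamma_\psi\setminus S_\psi$ the normal $\nu$ tends, as $z\to-\infty$, to a horizontal vector, which is exactly the statement that $\partial E$ is locally a $\mathcal{C}^{2+\alpha}$ hypersurface (this is the classical analysis of the asymptotic behavior of the graph near a singular boundary, cf.\ \cite[Thm.\ 14.13]{giustibook} and the discussion on p.\ 168 there). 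The singular set $S\subset\partial E$ is then the set of $y_0$ such that the vertical line $\{y_0\}\times\R$ meets $S_\psi$, or over which $\Gamma_\psi$ is not asymptotically vertical; since $S_\psi$ has dimension $\le n-7$ and the extra degeneracy set contributes at most dimension $n-7$ as well, a projection/covering argument gives $\mathcal{H}^{\gamma}(S)=0$ for $\gamma>n-7$, hence certainly for $\gamma>n-8$ as stated.

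Finally, for the geometric equation \eqref{eqgeomk} on $\partial E\setminus S$: starting from equation \eqref{equ2}, $-{\rm div}\bigl(D\psi/\sqrt{1+|D\psi|^2}\bigr)=g-\oc/\sqrt{1+|D\psi|^2}$ in $E$, I would let a point approach $\partial E\setminus S$. Since $\psi\to-\infty$ and, by the asymptotic verticality just established, $|D\psi|\to+\infty$ there, the term $\oc/\sqrt{1+|D\psi|^2}\to 0$, while $-{\rm div}\bigl(D\psi/\sqrt{1+|D\psi|^2}\bigr)$ is exactly the mean curvature $\kappa$ of the level set of $\psi$, which converges to the mean curvature of $\partial E$ (this is where one uses the $\mathcal{C}^{2+\alpha}$ convergence near $\partial E\setminus S$ and the convergence of normals from Theorem \ref{regularity}, applied to the level sets $\{\psi=-M\}$ as $M\to+\infty$, which are themselves minimizers of prescribed-curvature functionals with uniformly bounded curvature $\|g\|_\infty$). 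Passing to the limit yields $\kappa=g$ on $\partial E\setminus S$.

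The main obstacle I expect is the second paragraph: rigorously passing from the regularity of $\Gamma_\psi$ off $S_\psi$ to the regularity of the \emph{projected} boundary $\partial E$, i.e.\ controlling the set where the graph fails to be asymptotically vertical and showing it does not inflate the Hausdorff dimension of the singular set under projection. This is precisely the technical heart of the analogous results in \cite[Chapters 14--15]{giustibook}, and adapting it to the exponentially-weighted periodic setting (where the relevant minimal surface equation in $\T^n\times\R$ has the extra zeroth-order term $c\phi_{n+1}$) requires checking that the blow-up analysis and the strong maximum principle arguments go through unchanged — which they do, since the weight $e^{cz}$ is smooth and positive and the perturbation is lower order, but verifying this carefully is the delicate part.
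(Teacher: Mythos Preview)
Your treatment of the structure of $E$ (overlap of supports, uniqueness when connected) follows the paper. For finiteness the paper argues more directly: from \eqref{per} one has $\Per(E_i,\T^n)\le(\max_Q g)|E_i|$, and the isoperimetric inequality \eqref{iso} then gives $|E_i|\ge(C_n/\max_Q g)^n$; you do not need the density estimate here.

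For the regularity of $\partial E$ and the equation $\kappa=g$, your route is genuinely different from the paper's and contains a real gap. The paper does \emph{not} project the regularity of $\Gamma_\psi$ down to $\partial E$. Instead it observes that the vertical translates $\Sigma_\lambda:=\Sigma_{\psi+\lambda}$ are all minimizers of $\mathcal F_\oc$, and as $\lambda\to+\infty$ they converge locally in $L^1$ to the vertical cylinder $E\times\R$. By compactness of quasi-minimizers, $E\times\R$ is itself a local minimizer of $\mathcal F_\oc$, so standard regularity theory applies directly to $\partial E\times\R$. Since this hypersurface is invariant under vertical translations, its singular set is of the form $S\times\R$ with $\mathcal H^{\gamma+1}(S\times\R)=0$ for $\gamma+1>n-7$, i.e.\ $\mathcal H^\gamma(S)=0$ for $\gamma>n-8$. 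The equation $\kappa=g$ is then the Euler--Lagrange equation of $\mathcal F_\oc$ restricted to the cylinder (the weight contributes $\oc\,\nu_{n+1}$, which vanishes since the normal is horizontal).

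Your projection argument cannot reach the stated bound. Projecting $S_\psi$ (which has Hausdorff dimension at most $n-7$ in $\R^{n+1}$) gives at best $\dim S\le n-7$, i.e.\ $\mathcal H^\gamma(S)=0$ for $\gamma>n-7$. Your sentence ``hence certainly for $\gamma>n-8$'' has the implication backwards: the interval $(n-8,n-7]$ is \emph{not} covered, and this is precisely the loss incurred by projecting rather than exploiting the translation invariance of the limiting cylinder. The missing idea is exactly the passage to $E\times\R$ as a minimizer in its own right; once you have that, both the sharp singular-set estimate and the equation $\kappa=g$ come for free, and the delicate asymptotic-verticality analysis you flag as the ``main obstacle'' becomes unnecessary.
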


\begin{proof} Let  $\psi_1, \psi_2$ be two minima of the functional $F_\oc$ and $E_1, E_2$ be the respective supports. By Proposition \ref{psireg},  if $E_1^i$ and $E_2^j$ are   connected components   respectively of $E_1$ and $E_2$ then either $ E_1^i\cap E_2^j=\emptyset $ or $E_1^i=E_2^j$. In this case there exists a constant $k$ such that $\psi_1=\psi_2+k $ on $E_1^i=E_2^j$. 
We then define $E$ as the union of all the connected components of the supports of the minima of the functional $F_\oc$. 

We claim  that the connected components of $E$ are finite. 
Fix    $E_i$   connected component  of $E$ and  $\psi_i$ solution to \eqref{equ2} with support   $E_i$. 
{}From \eqref{per} we obtain that Per$(E_i,\T^n)\leq \max_Q g|E_i|$. 
This, combined with the isoperimetric inequality \eqref{iso}, gives    that   $|E_i|\geq \left(C_n/\max_Q g\right)^n$, which implies our claim.  

If $E$ is connected, the uniqueness up to addition of constants of the minimizers is  a consequence of Proposition \ref{psireg}.    

We now show the regularity of $\partial E$. Let $\psi\ge 0$ be a minimizer of $F_\oc$ and assume
without loss of generality that $E=E_\psi$. Since $\psi_\lambda = \psi+\lambda$ is also a minimizer for all $\lambda\in \R$,
from the proof of Proposition \ref{psireg} we know that the subgraphs $\Sigma_\lambda=\{(y,z)\in Q\times \R\ |\ z< \psi_\lambda(y)\}$
(locally) minimize the functional $\mathcal F_\oc$ defined in \eqref{fcpc}, for all $\lambda\in \R$. In particular, since $\Sigma_\lambda\to E\times \R$ locally in 
the $L^1$-topology, as $\lambda\to +\infty$, by compactness of quasi minimizers of the area functional \cite{Amb97} we have that 
$E\times \R$ is also a minimizer of $\mathcal F_\oc$ under compact perturbations. 
The thesis then follows by classical regularity theory
for minimal surfaces with prescribed curvature \cite{m,Amb97}.

\end{proof} 

\begin{remark}\rm \label{rkk}
When $n=1$, \eqref{eqgeomk} reduces to 
\begin{equation*}
g = 0 \qquad \text{\rm on\ }\partial E.
\end{equation*}
In particular, $E\ne Q$ necessarily implies $\min_Q g\le 0$.
\end{remark}
\begin{remark}\rm \label{sub}
Let $\psi: E \to \R$ be a minimizer of $F_\oc$ with maximal support, as in the proof of Proposition \ref{support}, and let $\Psi=\frac{e^{\oc\psi}}{\oc}$
be the corresponding minimizer of $G_\oc$. Since $G_\oc$ is a convex functional on $L^2(Q)$, by the general theory of subdifferentials in \cite{brezis,ACM} 
there exist  a vector field 
$\xi_\Psi=\xi:Q\to \R^n$, with $|\xi|\leq 1$  and div$(\xi )\in L^2(Q)$, and a function $h_\Psi=h:Q\to \R$, with $0\leq h \leq 1$, such that  
\begin{equation}\label{subdifferential} 
\int_Q \left(-\text{div}\,\xi  (y)+\oc h (y)- g(y) \right)(w-\Psi) dy\geq 0\qquad \text{ in }Q,
\end{equation} 
for all $w\in BV_{\text{per}}(Q)$ such that $w\geq 0$.
Moreover, for all $y\in E_\psi$,
\begin{eqnarray*}
h(y) &=& \frac{\oc\Psi(y)}{\sqrt{\oc^2\Psi^2(y)+|D\Psi (y)|^2}}
\\
\xi(y) &=& \frac{D\Psi (y)}{\sqrt{\oc^2\Psi^2(y)+|D\Psi (y)|^2}}.
\end{eqnarray*}
If we apply   inequality \eqref{subdifferential} to   $w=\Psi+\chi_F$, where $F\subseteq Q$ is a set of finite perimeter, we obtain   
\begin{equation}\label{diseqvi}
\Per(F,Q)+  \int_F \left(\oc h(y)-g(y)\right) dy\geq 0\,. 
\end{equation} 

In particular, \eqref{per} and \eqref{diseqvi}  imply that $E$ is a minimum for the functional 
\[\mathcal{G}(F)={\rm Per} (F,\T^n) +\int_V \left(\oc h(y)-g(y)\right) dy
\qquad F\subseteq Q.
\] 
\end{remark}

\begin{remark}\rm
We observe that, if $\psi$ a solution to \eqref{equ1} such that $e^{c\psi}\in BV_{\rm per}(Q)$ for some $c>0$, 
which by regularity amounts to say that $\psi$ is bounded from above, then necessarily $F_c(\psi)=0$ so that 
$c\le \oc$ (see Corollary \ref{coro}). Moreover, if $c<\oc$, the support of $\psi$ is strictly smaller than $Q$.  
This means that our variational method selects the {\it fastest} traveling wave solutions to \eqref{graphequation}
which are bounded from above \cite{mu}.

However, there might exist other traveling wave solutions with $c>\oc$, which are not in $BV_{\rm per}(Q)$ 
(see for instance \cite{mrr}). 
\end{remark}

\subsection{Existence of classical traveling waves}\label{globalsolutionconditions} 

In this subsection we state some  condition on the forcing term $g$ under which equation \eqref{equ2} admits a bounded solution $\psi$ in $Q$. 
This problem can be restated as following: find sufficient conditions on   $g$,  under which the maximal support $E$ defined in Proposition \ref{support} coincides with $Q$. 

\begin{remark} \label{stationary} \rm Observe that a first necessary condition on $g$, under which equation \eqref{equ2}, with $\oc>0$,  admits a bounded solution $\psi$ in $Q$ is that $\int_Q g> 0$. In fact, if $\int_Q g= 0 $ and   $\psi$ is a bounded solution to \eqref{equ1}, then $c=0$. In \cite{bcn} we show that condition \eqref{gcondstat} is sufficient to get the existence of a bounded smooth solution to \eqref{equ1} on $Q$ with $c=0$. Proposition \ref{psireg} shows that this condition is essentially optimal for the existence of \emph{stationary wave solutions}. 
\end{remark} 
We consider a solution $\psi$ to \eqref{equ2} with boundary conditions \eqref{singularbc} and  maximal support $E$. Let  
$\Psi=\frac{e^{\oc \psi} }{\oc}$.   
We recall that by \eqref{per}  
\begin{equation}\label{e}
\Per(E,\T^n)=\int_E \left(g(y)- \oc h(y)\right)dy \le \max_Q g\, |E|,
\end{equation} 
where $h=h_\Psi$ is the function defined in \eqref{subdifferential}, In Remark \ref{sub}.  
Since by \eqref{diseqvi}
\[
\int_Q \oc h(y)-g(y)dy \ge 0,
\]
we also have
\begin{equation}\label{f}  \Per(E,\T^n)\leq \int_{Q\setminus E}   \oc h(y) -g(y)dy. 
\end{equation}

{}From inequality \eqref{f}, recalling   $0\leq h\leq 1$ and that $\int_Q g\leq \oc\leq \max_Qg$,  it follows
\begin{equation}\label{emmo}
\Per(E,\T^n)\le \left( \max_Q g-\min_Q g\right)|Q\setminus E|.
\end{equation}

%
%
%
%
%

Assume now $|Q\setminus E|>0$. Recalling the isoperimetric inequality \eqref{iso}, from \eqref{e} and \eqref{emmo} we get
\begin{eqnarray*}
\left( \max_Q g-\min_Q g\right)\frac{1}{2^\frac 1 n}\ge 
\left( \max_Q g-\min_Q g\right)|Q\setminus E|^\frac{1}{n} \ge C_n \qquad {\rm or}\qquad |Q\setminus E|>\frac 1 2.
\end{eqnarray*}
In particular, if 
\begin{equation}\label{equno}
 \max_Q g-\min_Q g < C_n\, 2^\frac 1 n
\end{equation}
we necessarily have $|E|\le 1/2$ and, from \eqref{e},
\begin{equation}\label{eq2}
\max_Q g \geq \frac{\Per(E,\T^n)}{|E|}\geq C_n |E|^{-\frac 1 n}\geq C_n 2^{\frac 1 n}.
\end{equation}  

If $\min_Q g \leq	0$, then \eqref{equno} implies that , in contradiction with \eqref{eq2}. 

If $\min_Q g>0$,   from \eqref{eq2} we get 
\[
\frac 1 2\ge |E|\ge \left( \frac{C_n}{\max_Q g}\right)^n.
\]
{}From \eqref{emmo} it then follows 
\begin{eqnarray*}
\left( \max_Q g-\min_Q g\right)\left(1- \left( \frac{C_n}{\max_Q g}\right)^{n }\right)  &\ge& 
\left( \max_Q g-\min_Q g\right)(1-|E|)
\\
&\ge& C_n |E|^\frac{n-1}{n}
\\
&\ge& C_n \left( \frac{C_n}{\max_Q g}\right)^{n-1}.
\end{eqnarray*} So if $\min_Q g$, we necessarily   have $E=Q$ if either $ \max_Q g < C_n\, 2^\frac 1 n$ or $ \max_Q g \geq  C_n\, 2^\frac 1 n$
and $\max_Q g-\min_Q g  < C_n \left( \frac{C_n}{\max_Q g}\right)^{n-1}\left(1- \left( \frac{C_n}{\max_Q g}\right)^{n }\right)^{-1}$, 

Collecting the previous results above and recalling Remark \ref{rkk} 
we get the following proposition.

\begin{proposition}\label{classicaltw} Assume that $\int_Q g>0$. Then  equation \eqref{equ2} admits a bounded solution $\psi$ in $Q
 $ if one of the following conditions is verified.  
\begin{itemize}
\item[-] $\min_Q g\leq 0$ and $\max_Q g-\min_Qg< C_n 2^{1/n}$;
\item[-] $g>0$ on $Q$ and $\max_Q g < C_n 2^{1/n}$;
\item[-] $g>0$ on $Q$,    $\max_Q g \geq C_n 2^{1/n}$ and $\max_Q g-\min_Qg< \max_Qg \left(\left(\frac{\max_Qg}{C_n}\right)^n-1\right)^{-1}$;
\item[-] $n=1$ and $g>0$ on $Q$ 
\end{itemize}
where $C_n$ is the isoperimetric constant appearing in \eqref{iso} (and $C_1=2$). 
\end{proposition}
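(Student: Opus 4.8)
The plan is to collect the chain of inequalities developed in the paragraphs immediately preceding the statement and to show that, for each listed configuration of $g$, the assumption $|Q\setminus E|>0$ leads to a contradiction, so that necessarily $E=Q$; since $E$ is the maximal support of a minimizer $\psi$ of $F_\oc$, Proposition \ref{psireg} then gives $\psi\in\mc C^{2+\alpha}_{\rm loc}(Q)$ solving \eqref{equ2} on all of $Q$, i.e.\ a bounded classical traveling wave. The hypothesis $\int_Q g>0$ guarantees via Corollary \ref{coro} (and \eqref{eqqq}) that $\oc>0$ and $\int_Q g\le\oc\le\max_Q g$, which is exactly what is used in passing from \eqref{f} to \eqref{emmo}. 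The starting point in each case is the dichotomy obtained from the isoperimetric inequality \eqref{iso} applied to $Q\setminus E$ together with \eqref{emmo}: either $|Q\setminus E|>1/2$, or $|Q\setminus E|\le 1/2$ and then $(\max_Q g-\min_Q g)\,2^{-1/n}\ge C_n$.

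First I would treat the case $\min_Q g\le 0$. Under \eqref{equno}, i.e.\ $\max_Q g-\min_Q g<C_n 2^{1/n}$, the second alternative of the dichotomy is impossible, so $|Q\setminus E|>1/2$, hence $|E|<1/2$; then \eqref{e} together with the isoperimetric inequality \eqref{iso} gives $\max_Q g\ge \Per(E,\T^n)/|E|\ge C_n|E|^{-1/n}> C_n 2^{1/n}$, which contradicts $\max_Q g\le \max_Q g-\min_Q g<C_n 2^{1/n}$ (here $\min_Q g\le 0$ is used to absorb the negative term). Thus $|Q\setminus E|=0$, i.e.\ $E=Q$.

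Next I would handle the case $g>0$ on $Q$. Again \eqref{equno} forces $|E|\le1/2$, and \eqref{eq2} yields $|E|\ge (C_n/\max_Q g)^n$. If $\max_Q g<C_n 2^{1/n}$ one gets directly a contradiction with \eqref{eq2}, so $E=Q$; this is the second bullet. Otherwise $\max_Q g\ge C_n 2^{1/n}$, and then I would feed the two-sided bound $(C_n/\max_Q g)^n\le |E|\le 1/2$ back into \eqref{emmo} written for $1-|E|=|Q\setminus E|$ and into \eqref{iso}, obtaining
\[
\bigl(\max_Q g-\min_Q g\bigr)\Bigl(1-(C_n/\max_Q g)^n\Bigr)\ge C_n(1-|E|)^{\frac{n-1}{n}}\cdot\frac{1}{(1-|E|)^{?}}
\]
— more precisely the computation displayed just before the proposition gives $\max_Q g-\min_Q g\ge \max_Q g\bigl((\max_Q g/C_n)^n-1\bigr)^{-1}$, contradicting the third-bullet hypothesis; hence $E=Q$. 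The one-dimensional bullet follows from Remark \ref{rkk}: for $n=1$, $E\ne Q$ forces $\min_Q g\le 0$, so $g>0$ on $Q$ already implies $E=Q$.

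The main obstacle is purely bookkeeping: one must be careful that the various inequalities \eqref{e}, \eqref{emmo}, \eqref{eq2} are applied with the correct role of $|E|$ versus $|Q\setminus E|$, and that the algebraic manipulation turning the combined isoperimetric bounds into the explicit threshold $\max_Q g\bigl((\max_Q g/C_n)^n-1\bigr)^{-1}$ is carried out consistently (the inequalities in the excerpt's displayed computation have some typos, e.g.\ the missing conclusion "in contradiction with" and the exponent mismatch, which should be cleaned up). No new analytic input is needed beyond the isoperimetric inequality \eqref{iso}, the perimeter identity \eqref{per}, the variational inequality \eqref{diseqvi}, and the regularity statement of Proposition \ref{psireg}; once $E=Q$ is established, the existence of the bounded solution is immediate.
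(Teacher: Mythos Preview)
Your proposal is correct and follows exactly the paper's own argument: assume $|Q\setminus E|>0$, use the dichotomy coming from \eqref{emmo} and the isoperimetric inequality \eqref{iso}, deduce $|E|\le 1/2$ under \eqref{equno}, and then derive from \eqref{e} and \eqref{emmo} the explicit lower bounds on $\max_Q g$ and on the oscillation that contradict each bullet's hypothesis; the $n=1$ case is handled via Remark \ref{rkk}. One small slip to clean up: in your third-bullet display the isoperimetric inequality must be applied to $E$ (since $|E|\le 1/2$), so the right-hand side should carry $|E|^{(n-1)/n}\ge (C_n/\max_Q g)^{n-1}$ rather than a power of $1-|E|$, exactly as in the paper's chain; and you should note (as the paper leaves implicit) that the third-bullet oscillation bound automatically implies \eqref{equno} when $\max_Q g\ge C_n 2^{1/n}$, so the reduction to $|E|\le 1/2$ is legitimate there too.
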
 

\begin{remark}\rm Observe that   the assumptions in  the previous Proposition assure the existence of    classical  traveling wave solutions to \eqref{graphequation}, i.e. solutions of the form $\oc t+\psi(x)$, where $\psi$ is a smooth, $\Z^n$-periodic solution to  \eqref{equ2}.  
\end{remark}
\begin{remark}\label{knownest}\rm  
In \cite{ls} Lions and Souganidis  showed  that \eqref{equ2} admits a (periodic) solution over all $Q$
if $g$ does not change sign and satifies the condition 
\[
\exists \theta\in (0,1)\ \text{ s.t. }\min_{x\in Q}\left(\theta g^2(x)-(n-1)^2 |Dg(x)|\right)>0.
\]

In \cite{CLS} Cardaliaguet, Lions and Souganidis  proved  
that, when $n=1$ and $\int_0^1 g(y)dy>0$, the following condition implies the solvability of the cell problem: 
\begin{equation}\label{eqcls}
0\leq \int_0^1 g(y)dy-\min_{z\in [0,1]} g(z)<2 .
\end{equation}
\end{remark} 

\section{Stability and long-time behavior} \label{secconv}

If $u$ is a solution to  \eqref{graphequation}, then $w(t,y)=u(t,y)-\oc t$ is  a solution to 
\begin{equation}\label{equationc}  
 w_t   =  \tr\left[\left(\mathbf{I}-  \frac{ Dw \otimes Dw }{1+|Dw |^2}\right)D^2w  \right]+ g  \sqrt{1+|Dw |^2} -\oc 
\qquad  \text{in } (0, +\infty)\times Q
\end{equation}    
with periodic boundary conditions
and initial datum $ w(0,y)  = u_0(y)$. 
Note that  $w$ is the unique solution to \eqref{equationc}, and it is also a classical solution, see Theorem \ref{thex}.
Standard comparison gives that 
$(\min g -\oc)t-\|u_0\|_\infty\leq w(t,x)\leq (\max g -\oc)t+\|u_0\|_\infty$ for every $t\geq 0$, $x\in\R^n$. 
Moreover, under the assumption \eqref{gcondition},  $w$ is bounded (from below) uniformly in $t$.

\begin{lemma} \label{lemmabound}
Let $w$ be the solution to \eqref{equationc} and $\psi$ be any solution to \eqref{equ2}, then   
\begin{equation} \label{bound} 
w(t,y)-\psi(y)\geq \min_{Q}\, (u_0-\psi)\qquad \forall \ t\geq 0, \ y\in Q.
\end{equation} 

Moreover, if there exists a solution $\psi$ to \eqref{equ2} in $Q$, then there exists a constant $M$, depending only on $\|u_0\|_\infty$  
such that $|w(t,x)|\leq M$ for every $t\geq 0$ and $y\in Q$.
\end{lemma}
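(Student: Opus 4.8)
The plan is to prove the lower bound \eqref{bound} by a comparison argument between the solution $w$ of \eqref{equationc} and the stationary profile $\psi$. The key observation is that $\psi$, being a solution of \eqref{equ2}, is a \emph{stationary} solution of \eqref{equationc}: indeed, rewriting the mean curvature operator in nondivergence form, $\tr\big[(\mathbf I - Dw\otimes Dw/(1+|Dw|^2))D^2w\big] = \sqrt{1+|Dw|^2}\,\mathrm{div}\big(Dw/\sqrt{1+|Dw|^2}\big)$, equation \eqref{equ2} says precisely that the right-hand side of \eqref{equationc} vanishes when $w$ is replaced by $\psi$. Therefore $\psi$ is a time-independent solution of the same parabolic equation that $w$ solves, and both satisfy periodic boundary conditions on $\partial Q$. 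First I would set $c_0 := \min_Q(u_0-\psi)$ and note $c_0$ is finite (by Proposition \ref{psireg}, $\psi$ is bounded above and $u_0$ is bounded, and where $\psi \equiv -\infty$ the inequality is vacuous — more carefully one works on $E_\psi$ and lets $\mathrm{dist}(y,\partial E_\psi)\to 0$ using \eqref{singularbc}). Then $\psi + c_0$ is again a stationary solution of \eqref{equationc}, and at $t=0$ we have $w(0,\cdot) = u_0 \ge \psi + c_0$ on $Q$. By the comparison principle for \eqref{equationc} (Theorem \ref{thex} and \cite{bbbl}) applied on $[0,\infty)\times Q$ with periodic boundary conditions, it follows that $w(t,y)\ge \psi(y)+c_0$ for all $t\ge 0$, $y\in Q$, which is exactly \eqref{bound}.

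For the second assertion, suppose $\psi$ solves \eqref{equ2} on all of $Q$; then by Proposition \ref{classicaltw}-type regularity (or directly since $E_\psi = Q$ is compactly contained in the torus) $\psi$ is a bounded smooth $\Z^n$-periodic function, say $\|\psi\|_\infty \le L$ for some constant $L$ depending on $g$ and $\oc$. The lower bound \eqref{bound} then gives $w(t,y) \ge \psi(y) + \min_Q(u_0-\psi) \ge -L - (\|u_0\|_\infty + L) = -(\|u_0\|_\infty + 2L)$. For the upper bound, I would run the symmetric comparison from above: since $\psi$ is a stationary solution, $\psi + \max_Q(u_0 - \psi)$ lies above $u_0$ at $t=0$, so comparison yields $w(t,y) \le \psi(y) + \max_Q(u_0-\psi) \le L + (\|u_0\|_\infty + L) = \|u_0\|_\infty + 2L$. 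Taking $M := \|u_0\|_\infty + 2L$ gives $|w(t,y)|\le M$ for all $t\ge 0$ and $y\in Q$, with $M$ depending only on $\|u_0\|_\infty$ (and on the fixed data $g$, $\oc$, through $L$).

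The main technical point — and the only place where care is genuinely needed — is the application of the comparison principle when $\psi$ may take the value $-\infty$ on part of $Q$ in the first assertion: the comparison \cite{bbbl} is stated for the graph equation with finite continuous data, so one cannot directly compare $w$ with $\psi$ when $E_\psi \subsetneq Q$. I would handle this by working inside the open set $E_\psi$: for each compact $K \Subset E_\psi$ one has $\psi$ smooth and finite on a neighborhood of $K$, and the singular boundary behaviour \eqref{singularbc} guarantees that near $\partial E_\psi$ the function $\psi + c_0$ dives to $-\infty$ while $w$ stays bounded below (by the crude bound $w(t,x)\ge (\min g - \oc)t - \|u_0\|_\infty$ on finite time intervals, or by a barrier), so the boundary of the comparison region poses no obstruction and the parabolic comparison principle applies on $(0,T]\times E_\psi$; letting $T\to\infty$ and $K \uparrow E_\psi$ yields \eqref{bound} on $E_\psi$, hence on $Q$ since the claim is vacuous off $E_\psi$. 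When instead $\psi$ solves \eqref{equ2} on all of $Q$ this difficulty disappears entirely and the argument is a clean two-sided comparison.
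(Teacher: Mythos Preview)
Your approach is essentially the same as the paper's: use $\psi + c_0$ (with $c_0 = \min_Q(u_0-\psi)$) as a stationary subsolution of \eqref{equationc} and apply the comparison principle. The paper phrases this as showing that $m(t) = \min_Q(w(t,\cdot)-\psi)$ is nondecreasing in $t$, but the content is identical, and for the second assertion both you and the paper run the symmetric two-sided comparison with $\psi \pm (\|u_0\|_\infty + \|\psi\|_\infty)$.

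The one place where your argument is not quite watertight is the handling of $\partial E_\psi$ when $E_\psi \subsetneq Q$. You invoke \eqref{singularbc}, but that condition only says $\psi(x) \to -\infty$ for $\mathcal H^{n-1}$-a.e.\ boundary point; at points lying under the singular set $S_\psi$ of $\Gamma_\psi$ (which may be nonempty in high dimensions), $\psi$ can remain finite, so your claim that ``$\psi + c_0$ dives to $-\infty$ near $\partial E_\psi$'' does not hold pointwise and a naive boundary comparison could fail there. The paper addresses exactly this issue by appealing to Corollary~\ref{corobc}, i.e.\ the viscosity-type boundary condition \eqref{singularbc2}: for any $\phi \in \mathcal C^1_{\rm per}(\overline Q)$ the function $\phi - \psi$ attains its minimum in the \emph{interior} of $E_\psi$. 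Applying this with $\phi = w(t+s,\cdot)$ rules out boundary minima of $w - \psi$ and makes the comparison on $E_\psi$ rigorous. Replace your reference to \eqref{singularbc} by Corollary~\ref{corobc} and the argument is complete.
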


\begin{proof}  
We fix a $\psi$ solution to \eqref{equ2}, and let   $E=E_\psi$ (see Proposition \ref{psireg}). 
We recall that by Corollary \ref{corobc}, $\psi$ satisfies the boundary conditions \eqref{singularbc2} on $\partial E_\psi$. 

We shall prove that 
\[m(t):= \min_{x\in Q}(w(t,x)-\psi(x)) \] is nondecreasing in $t$. 
Obviously this is sufficient to prove that  $\min_{x\in \overline{E}}(w(t,x)-\psi(x))$ is nondecreasing in $t$. 
We fix $s\geq 0$   and  observe that  $w(t+s,x)$ is the solution to 
\begin{equation*}
v_t (t,x)   =  \tr\left[\left(\mathbf{I}-  \frac{ Dv \otimes Dv }{1+|Dv |^2}\right)D^2v  \right]+  g\left(x\right)  \sqrt{1+|Dv |^2} -\oc \quad \text{in } (0, +\infty)\times E\end{equation*}  
with initial datum $ v(0,x)  = w(s,x)$, and with boundary conditions $v(t,x)=w(t+s,x)$ on $\partial E$ for all $t\geq 0$. 
Notice that $\psi(y)+\min_{\hat y\in Q}(w(s,\hat y)-\psi(\hat y))$ is a regular (stationary) subsolution to the same problem. 
Moreover by 
Corollary \ref{corobc} we have that $w(t+s , x)- [\psi(x)+\min_{y\in Q}(w(s,y)-\psi(y))]$ can attain its minima only in the interior of $E$. So we can apply   comparison principle arguments (see \cite{bbbl}) to conclude  that $w(t+s,x)-\psi(x)\geq \min_{y\in Q}(w(s,y)-\psi(y))$ for   every $t\geq 0$ and $x\in Q$.

Finally, if there exists a solution $\psi$ to \eqref{equ2} in the whole $Q$, then $\psi(x)+\|u_0\|_\infty+\|\psi\|_\infty$ and $\psi(x)-\|u_0\|_\infty-\|\psi\|_\infty$ are, respectively, a supersolution and a subsolution to \eqref{graphequation} and  we conclude by the  standard comparison principle.  
\end{proof}
\begin{remark}\rm \label{remdecr} 
Note that if there is a solution to \eqref{equ2} in the whole $Q$,   a similar  argument gives that 
\[M(t):= \max_{x\in Q}(w(t,x)-\psi(x)) \] is  nonincreasing in $t$. 
\end{remark}

\begin{lemma}\label{lip} 
Let $w$ be a solution to \eqref{equationc}.
Then for all $\tau>0$ there exists a constant $C>0$, depending on $u_0$, $g$ and $\tau$, such that $\|w_t\| \leq C$ for all $t\ge \tau$.
\end{lemma}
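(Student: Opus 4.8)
The plan is to differentiate equation \eqref{equationc} in time and derive a parabolic equation for $w_t$ to which one can apply interior parabolic estimates. Setting $v:=w_t$, a formal differentiation of \eqref{equationc} shows that $v$ solves a linear uniformly parabolic equation (uniformly parabolic on the time interval $[\tau/2,+\infty)$ because, by Theorem \ref{thex}, $Dw$ is bounded on $[\tau/2,T]\times Q$ for every $T$, and in fact the gradient bound is uniform in $T$ once $w$ itself is bounded, which holds by the lower bound in Lemma \ref{lemmabound} together with the a priori upper bound $w\le(\max g-\oc)t+\|u_0\|_\infty$ combined with the forthcoming refined bounds — but for the present lemma it suffices to work on bounded time windows and patch) with bounded coefficients, of the schematic form
\begin{equation*}
v_t = \operatorname{tr}\left[a_{ij}(Dw)\,D^2 v\right] + b_i(Dw,D^2w)\,D_i v + c(y,Dw)\,v .
\end{equation*}
Here the zeroth-order coefficient $c$ comes from differentiating the forcing term $g\sqrt{1+|Dw|^2}$ and is bounded in terms of $\|g\|_\infty$ and $\|Dw\|_\infty$; crucially $c$ does \emph{not} have a favourable sign, so one cannot get the bound from a maximum principle directly.

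Instead the key point is the global-in-time $L^2$ bound $w_t\in L^2([0,+\infty)\times Q)$ from Theorem \ref{thex}. The strategy is: first establish, on a unit time window, that $v=w_t$ is bounded in terms of its $L^2$ norm over a slightly larger window, by a standard interior parabolic estimate (De Giorgi–Nash–Moser / $L^p$ parabolic regularity applied to the equation above, whose coefficients are bounded and whose leading part is uniformly elliptic thanks to $\|Dw\|_{L^\infty([\tau/2,T]\times Q)}<\infty$). Concretely, for $t\ge\tau$ one has
\begin{equation*}
\|w_t\|_{L^\infty([t,t+1]\times Q)} \le C\left(1+\|w_t\|_{L^2([t-1,t+1]\times Q)}\right),
\end{equation*}
with $C$ depending only on $n$, $\|g\|_{C^1}$, $\tau$ and the gradient bound. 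Since $\|w_t\|_{L^2([0,+\infty)\times Q)}<+\infty$, the right-hand side is bounded uniformly in $t\ge\tau$, which gives the claim. One must be a little careful that the constant $C$ does not deteriorate as $t\to+\infty$: this is where one uses that the ellipticity constant and the coefficient bounds depend only on $\|Dw\|_\infty$ and $\|g\|_{C^1}$, both of which are controlled uniformly in time — the gradient estimate of Ecker–Huisken type \cite{ck,eh} applied on the moving window $[t-1,t+1]$, starting from the uniform $L^\infty$ bound on $w$ on that window (which follows from Lemma \ref{lemmabound} and the linear-in-$t$ two-sided a priori bound, after the trivial observation that on a window of length $2$ the oscillation of the affine bounds is itself bounded).

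The main obstacle is precisely controlling the gradient $Dw$ uniformly in time so that the parabolic constant in the interior estimate is time-independent; once that is in hand the argument is a routine bootstrap from the $L^2$-in-time integrability. A clean way to organize it is: (i) from Lemma \ref{lemmabound} (lower bound) and the elementary upper bound $w(t,x)\le(\max g-\oc)t+\|u_0\|_\infty$ deduce that on each window $[t,t+2]$ the oscillation of $w$ is bounded by a constant independent of $t$ (note $\max g-\oc\le\max g-\int_Q g$, but in any case a fixed constant); (ii) invoke the interior gradient estimate for the mean curvature flow with smooth forcing on such windows to get $\sup_{[t+1,t+2]\times Q}|Dw|\le C$ with $C$ independent of $t$; (iii) apply the interior parabolic $L^\infty$–$L^2$ estimate to the equation for $v=w_t$ on these windows; (iv) sum over the windows using $w_t\in L^2([0,+\infty)\times Q)$ to conclude. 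This proves $\|w_t\|_{L^\infty([\tau,+\infty)\times Q)}\le C$, as asserted.
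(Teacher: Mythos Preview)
Your approach has a genuine gap, and it also overlooks a structural feature that makes the whole machinery unnecessary.

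On the structure: the linearized equation for $v=w_t$ has \emph{no} zeroth-order term. Differentiating $g(y)\sqrt{1+|Dw|^2}$ in $t$ gives $g(y)\,\dfrac{Dw\cdot Dv}{\sqrt{1+|Dw|^2}}$, a first-order term in $v$; more generally, the right-hand side of \eqref{equationc} depends only on $(y,Dw,D^2w)$ and not on $w$, so the linearization reads $v_t=a_{ij}(Dw)\,D_{ij}v+b_i(Dw,D^2w)\,D_iv$. At a spatial maximum of $v$ one has $Dv=0$ and $a_{ij}D_{ij}v\le 0$, hence $v_t\le 0$ regardless of the size of $b_i$; the classical maximum principle therefore yields $\|w_t(t,\cdot)\|_\infty\le\|w_t(\tau,\cdot)\|_\infty$ for all $t\ge\tau$. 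The paper carries out exactly this argument, phrased at the level of $w$: since \eqref{equationc} is autonomous and invariant under $w\mapsto w+\text{const}$, both $w(\cdot+s,\cdot)$ and $w(\cdot,\cdot)\pm\sup_Q|w(\tau+s,\cdot)-w(\tau,\cdot)|$ are solutions, and comparison from time $\tau$ gives $|w(t+s,x)-w(t,x)|\le Cs$ for $t\ge\tau$, with $C:=\|w_t(\tau,\cdot)\|_\infty$ finite by Theorem~\ref{thex}. No gradient or oscillation control whatsoever is needed beyond this single time $\tau$.

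On the gap: your route through interior $L^\infty$--$L^2$ parabolic estimates needs the ellipticity constant and the drift $b_i$ (which involves $D^2w$) bounded uniformly in $t$, which you try to obtain from an Ecker--Huisken gradient estimate on sliding windows. That estimate in turn requires a uniform bound on the oscillation of $w$ on each window, and your step~(i) does not provide one. The upper barrier has slope $\max g-\oc$ while the lower barrier has slope $\min g-\oc$, so their gap grows linearly in $t$; and Lemma~\ref{lemmabound} gives a $t$-independent lower bound only when a \emph{global} $\psi$ exists. In the general setting of Section~\ref{secex} one has $\psi\equiv-\infty$ on $Q\setminus E_\psi$, and in fact Theorem~\ref{convergence2} shows $w(t,\cdot)\to-\infty$ locally uniformly on $Q\setminus\overline{E}_\opsi$, so the spatial oscillation of $w(t,\cdot)$ is unbounded. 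Steps (i)--(ii) therefore fail, and with them the entire bootstrap.
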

\begin{proof}
Recalling Theorem \ref{thex}, we define 
$$
C:= \left\|\tr\left[\left(\mathbf{I}-  
\frac{ Dw(\tau,\cdot) \otimes Dw(\tau,\cdot) }{1+|Dw(\tau,\cdot)|^2}\right)D^2 w(\tau,\cdot)\right]+  g(x) 
\sqrt{1+|D w(\tau,\cdot)|^2} -\oc \right\|_{L^\infty(Q)}<+\infty
$$ 
Then    $S(t,x)= C t + w(t,\cdot)$ is a supersolution to \eqref{equationc} and $s(t, x)=-Ct +w(t,\cdot)$ is a subsolution for all $t>\tau$.
Then by comparison \cite{bbbl} we obtain $-C t  \leq w(t,x)- w(\tau,x)\leq C t $. 
Moreover, for every fixed $s>\tau$, we get that $w(t,x)+\sup_x |w(s,x)-w(\tau,x)|$ 
and $w(t,x)-\sup_x |w(s,x)-w(\tau,x)|$ are respectively a supersolution and a subsolution to \eqref{equationc} with initial data $w(s,x)$. So, again by comparison, and recalling the previous estimate,  for every $\tau\leq s\leq t$ we obtain
$$
-Cs   \leq w(t+s,x)- w(t,x)\leq  C s.
$$ 
\end{proof} 

The estimate in Lemma \ref{lip} imlies  that, for all $t>0$ the function $w(t,\cdot)$ satisfies in the viscosity sense  
\begin{equation}\label{curvatura}  
 -C-g(x)\leq \text{div}\left(\frac{ Dw(t,x) }{\sqrt{1+|Dw(t,x)|^2}}\right)    \leq C +\oc -g(x)\ \text{in }  \R^n\ \forall t\geq \tau. 
\end{equation} 
So, this gives in particular that the curvature of the graph of $w(t,\cdot)$ is uniformly  bounded with respect to $t\in [\tau,+\infty)$.

\begin{proposition}\label{proreg}
Let $\Gamma_w(t)\subset Q\times \R$ be the graph of $w(t,\cdot)$. Then, for all $\tau>0$, $\Gamma_w(t)$ are 
hypersurfaces of class $\mathcal{C}^{1+\alpha}$, for all $\alpha\in (0,1)$, uniformly in $t\in [\tau,+\infty)$. 
\end{proposition}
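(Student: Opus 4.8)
The plan is to upgrade the curvature bound \eqref{curvatura} into uniform $\mathcal{C}^{1+\alpha}$ regularity of the graphs $\Gamma_w(t)$ by the same prescribed-mean-curvature regularity machinery already invoked several times in Section~\ref{secex}. Concretely, for each $t\ge\tau$ the graph $\Gamma_w(t)$ is the reduced boundary of the subgraph $\Sigma_w(t):=\{(y,z):z<w(t,y)\}$, and by \eqref{curvatura} together with Lemma~\ref{lip} the mean curvature of $\partial\Sigma_w(t)$ is bounded by a constant $A:=C+\oc+\|g\|_\infty$ independent of $t$ (here $C=C(u_0,g,\tau)$ is the constant from Lemma~\ref{lip}). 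Since $\Sigma_w(t)$ is a \emph{subgraph}, it is automatically a local minimizer of the functional $\mathrm{Per}(V)+\int_V A_t(y,z)\,dy\,dz$ with $\|A_t\|_\infty\le A$ — this is the classical fact that subgraphs of functions of bounded (prescribed) mean curvature are locally area-minimizing with that forcing, cf.\ \cite[Chapter~14]{giustibook} — and crucially the bound on the forcing is uniform in $t$. One then applies Theorem~\ref{regularity} (with the forcing terms $A_n$ there replaced by our $A_t$, all bounded by the same $A$): this gives that $\partial\Sigma_w(t)\setminus\partial^\star\Sigma_w(t)$ has Hausdorff dimension at most $n-7$ and that near each regular point the boundary is a $\mathcal{C}^{1+\alpha}$ hypersurface; but since $\Sigma_w(t)$ is a subgraph of a (smooth, by Theorem~\ref{thex}) function, the boundary has no singular points, so $\Gamma_w(t)$ is globally a $\mathcal{C}^{1+\alpha}$ hypersurface for every $\alpha\in(0,1)$.

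The point that needs the compactness/closure part of Theorem~\ref{regularity} is the \emph{uniformity} in $t$. The standard $\eps$-regularity theorem for minimizers of $\mathrm{Per}(V)+\int_V A\,dV$ with $\|A\|_\infty\le A_0$ provides a radius $\delta=\delta(n,A_0)>0$ and a $\mathcal{C}^{1+\alpha}$ estimate that depend only on $n$, $A_0$ and the excess at a fixed scale — hence, for the family $\{\Sigma_w(t)\}_{t\ge\tau}$, which all satisfy the same forcing bound $A$, one gets a single $\delta$ and a single modulus of continuity for the unit normal, once one knows the density and excess are controlled uniformly. Density below a fixed scale is controlled uniformly by the density estimate (the lower bound \eqref{eqdens}-type estimate for quasi-minimizers, depending only on $n$ and $A$, together with the obvious upper density bound since $\Sigma_w(t)$ is a subgraph), and excess smallness at the chosen scale follows because, by the $\mathcal{C}^{1+\alpha}$ interior estimate already valid for each fixed $t$, the normal cannot oscillate too much; the only subtlety is to make this last step scale-invariant and $t$-independent, which is exactly what the convergence statement in Theorem~\ref{regularity} packages: if the conclusion failed uniformly there would be a sequence $t_n\to\bar t\in[\tau,+\infty]$ and points $x_n\in\Gamma_w(t_n)$ at which the $\mathcal{C}^{1+\alpha}$ norm blows up, but $\Sigma_w(t_n)\to\Sigma_\infty$ (locally in $L^1$, along a subsequence, by the uniform curvature and Lipschitz bounds of Theorem~\ref{thex} and Lemma~\ref{lip}) with $\Sigma_\infty$ again a subgraph of bounded mean curvature, hence regular at every point by the first part of Theorem~\ref{regularity}; the normal convergence statement in Theorem~\ref{regularity} then forces the normals to $\Gamma_w(t_n)$ to converge, contradicting the blow-up.

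So the steps, in order, are: (i) record from Lemma~\ref{lip} and \eqref{curvatura} that the mean curvature of $\Sigma_w(t)$ is bounded by a $t$-independent constant $A$ for all $t\ge\tau$; (ii) note $\Sigma_w(t)$ is a subgraph of a smooth function, hence a local minimizer of $\mathrm{Per}+\int A_t$ with $\|A_t\|_\infty\le A$, so by Theorem~\ref{regularity} $\Gamma_w(t)$ is a $\mathcal{C}^{1+\alpha}$ hypersurface with no singular set (dimension $\le n-7$ but empty because it is a graph); (iii) get uniformity in $t$ by the compactness and normal-convergence part of Theorem~\ref{regularity}, arguing by contradiction against a sequence $t_n$ along which the $\mathcal{C}^{1+\alpha}$ norms blow up, and using that subgraphs of functions with uniformly bounded curvature (and uniformly bounded Lipschitz constant on $[\tau,\infty)\times Q$ by Theorem~\ref{thex}) are precompact in $L^1_{\mathrm{loc}}$ with limit again a bounded-curvature subgraph. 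I expect step (iii) — extracting the $t$-uniform $\mathcal{C}^{1+\alpha}$ bound, rather than merely a bound for each fixed $t$ — to be the only real content; steps (i)–(ii) are essentially bookkeeping on top of results already proved in the excerpt.
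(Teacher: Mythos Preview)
Your approach is essentially the same as the paper's: both argue by contradiction, extract a sequence $t_n$ along which the $\mathcal{C}^{1+\alpha}$ norm blows up at some points, use the uniform curvature bound \eqref{curvatura} to view the subgraphs as quasi-minimizers of a prescribed-curvature functional with a $t$-independent forcing bound, pass to an $L^1_{\rm loc}$ limit by compactness of quasi-minimizers, and then invoke the convergence-of-normals part of Theorem~\ref{regularity} together with elliptic regularity to contradict the blow-up.

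There is one genuine technical gap in your step (iii). You write that the subgraphs $\Sigma_w(t_n)$ are precompact in $L^1_{\rm loc}$ ``by the uniform curvature and Lipschitz bounds of Theorem~\ref{thex} and Lemma~\ref{lip}''. But Theorem~\ref{thex} only gives $Dw\in L^\infty([0,T]\times Q)$ for each finite $T$, not uniformly on $[\tau,+\infty)$, and Lemma~\ref{lip} bounds $w_t$, not $Dw$. More importantly, without any normalization the points $x_n\in\Gamma_w(t_n)\subset Q\times\R$ need not stay in a compact region: their vertical coordinate $w(t_n,y_n)$ can drift to $\pm\infty$, so the limit set $\Sigma_\infty$ may be empty or all of $Q\times\R$ and you lose the boundary point at which to apply Theorem~\ref{regularity}. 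The paper fixes this by a vertical translation: it sets $\widetilde w_n(x):=w(x,t_n)-w(x_n,t_n)$, so that the relevant boundary point is pinned at $(x_n,0)$ with $x_n\in Q$ compact, and compactness of the translated subgraphs then follows from quasi-minimality alone (no Lipschitz bound needed). Once you insert this normalization, your argument goes through exactly as the paper's does; you should also note, as the paper does, that after writing the limit and the approximating surfaces as graphs in the direction of the limit normal $\nu_\infty$, it is standard elliptic regularity for minimizers of the prescribed-curvature functional that yields the uniform $\mathcal{C}^{1+\alpha}$ bound and hence the contradiction.
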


\begin{proof}
Assume by contradiction the statement to be false. Then we can find $(x_n,t_n)\in Q\times [0,+\infty)$ such that, 
for all $\rho>0$, the hypersurfaces $\Gamma_w(t_n)\cap B_\rho(x_n,t_n)$ are not uniformly $\mathcal{C}^{1+\alpha}$.
Letting $\widetilde w_n(x) := w(x,t_n)-w(x_n,t_n)$, from \eqref{curvatura} we have that 
\begin{equation}\label{curvtilde}  
-\text{div}\left(\frac{ D\widetilde w_n(x) }{\sqrt{1+|D\widetilde w_n(x)|^2}}\right)= h_n(x),    
\end{equation} 
with $\|h_n\|_\infty\le \C$ for some $\C$ independent of $n$. As a consequence $\widetilde w_n$ is a minimizer of
the prescribed curvature functional
\[
\int_Q \left(\sqrt{1+|Du|^2} - h_n u\right)dy.
\]
By the compactness theorem for quasi minimizers of the perimeter \cite{Amb97} the graphs $\Gamma_{\widetilde w_n}$ of $\widetilde w_n$
converge locally in the $L^1$-topology, up to a subsequence, to a limit hypersurface $\Gamma_\infty$ of class $\mc C^{1+\alpha}$.
We can also assume that $x_n\to x$ for some $x\in Q$, and let $\nu_\infty$ be the normal vector to $\Gamma_\infty$ at $(x,0)$.
However, by Theorem \ref{regularity} there exists $\rho>0$ such that $\Gamma_{\widetilde w_n}\cap B_\rho(x,0)$ and 
$\Gamma_{\infty}\cap B_\rho(x,0)$ can all be written as graphs in the direction given by $\nu_\infty$.
Therefore, by elliptic regularity for minimizers of the prescribed curvature functional \cite{m},
the sets $\Gamma_{\widetilde w_n}\cap B_\rho(x,0)$ are uniformly of class $\mc C^{1+\alpha}$ for all $\alpha\in (0,1)$,
thus leading to a contradiction.
\end{proof}  

The following lemma that will be useful in the following. \begin{lemma} \label{lemmadecreasing} 
Let  $F_\oc(v)= \int_Q e^{\oc v(y)}\left(\sqrt{1+ |Dv(y)| ^2} - \frac{g(y)}{\oc} \right)dy$
the functional defined in \eqref{functionalu}. Then for 
every (smooth) solution $w$ to the equation in \eqref{equationc}, 
\begin{equation}\label{decreasing}  
0\le F_\oc(w(t, \cdot))\le F_\oc(u_0 ) \qquad \text{\rm for all }  t>0.
\end{equation} 
\end{lemma}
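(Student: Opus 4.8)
The plan is to show that $t\mapsto F_\oc(w(t,\cdot))$ is nonincreasing along the flow, and that it stays nonnegative. The nonnegativity is the easy half: by Lemma~\ref{lip} and Proposition~\ref{proreg}, for each fixed $t>0$ the function $w(t,\cdot)$ is smooth with bounded gradient, so $e^{\oc w(t,\cdot)}\in BV_{\rm per}(Q)$, and hence by Corollary~\ref{coro} (third bullet, combined with the first two) we have $F_\oc(w(t,\cdot))\ge 0$, since $\oc$ is precisely the threshold speed at which the infimum of $F_c$ jumps from $-\infty$ to $0$. It only remains to establish monotonicity.

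**Monotonicity via the gradient flow structure.** The key observation is that \eqref{equationc} is, up to the exponential weight, the $L^2$-gradient flow of $F_\oc$. Concretely, I would compute $\frac{d}{dt}F_\oc(w(t,\cdot))$ directly. Writing $F_\oc(w)=\int_Q e^{\oc w}\big(\sqrt{1+|Dw|^2}-\tfrac{g}{\oc}\big)\,dy$ and differentiating under the integral sign (justified by the smoothness and gradient bounds from Theorem~\ref{thex}, Lemma~\ref{lip}, and Proposition~\ref{proreg} for $t\ge\tau>0$),
\[
\frac{d}{dt}F_\oc(w(t,\cdot)) = \int_Q e^{\oc w}\,w_t\left(\oc\sqrt{1+|Dw|^2}-g\right)dy + \int_Q e^{\oc w}\,\frac{Dw\cdot Dw_t}{\sqrt{1+|Dw|^2}}\,dy.
\]
In the second term I would integrate by parts (the boundary terms vanish by periodicity), moving the divergence onto $e^{\oc w}\,Dw/\sqrt{1+|Dw|^2}$. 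Expanding $\mathrm{div}\big(e^{\oc w}\,Dw/\sqrt{1+|Dw|^2}\big)=e^{\oc w}\big(\mathrm{div}(Dw/\sqrt{1+|Dw|^2})+\oc|Dw|^2/\sqrt{1+|Dw|^2}\big)$ and combining with the first term, the terms involving $\oc$ recombine into $\oc\,e^{\oc w}w_t\sqrt{1+|Dw|^2}$, so that
\[
\frac{d}{dt}F_\oc(w(t,\cdot)) = -\int_Q e^{\oc w}\,w_t\left(\sqrt{1+|Dw|^2}\,\mathrm{div}\Big(\frac{Dw}{\sqrt{1+|Dw|^2}}\Big)+g\sqrt{1+|Dw|^2}-\oc\right)dy.
\]
But the quantity in the large parentheses is exactly $w_t$, by the equation \eqref{equationc} (rewritten in the equivalent form $w_t=\sqrt{1+|Dw|^2}\,\mathrm{div}(Dw/\sqrt{1+|Dw|^2})+g\sqrt{1+|Dw|^2}-\oc$, which is the same as \eqref{graphequation} shifted by $\oc t$). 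Hence
\[
\frac{d}{dt}F_\oc(w(t,\cdot)) = -\int_Q e^{\oc w(t,y)}\,w_t^2(t,y)\,dy\le 0,
\]
which gives $F_\oc(w(t,\cdot))\le F_\oc(w(s,\cdot))$ for $0<s<t$.

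**Passing to the initial time.** To reach $F_\oc(w(t,\cdot))\le F_\oc(u_0)$ I would let $s\to 0^+$ in the inequality above, using $w(s,\cdot)\to u_0$ uniformly (Theorem~\ref{thex}) together with the gradient bound $Dw(s,\cdot)\in L^\infty$ on $[0,T]\times Q$ — which gives $Dw(s,\cdot)\rightharpoonup Du_0$ weakly-$\ast$ — and lower semicontinuity of $F_\oc$ with respect to this convergence (Proposition~\ref{psirepr} and the relaxation properties of $G_\oc$ on $BV_{\rm per}$); in fact, since $u_0$ is Lipschitz, $F_\oc(w(s,\cdot))\to F_\oc(u_0)$. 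The main obstacle is the rigorous justification of differentiating $F_\oc(w(t,\cdot))$ and integrating by parts near $t=0$: I sidestep this by doing the computation only for $t\ge\tau>0$ (where Lemma~\ref{lip}, Proposition~\ref{proreg}, and parabolic regularity give full smoothness and uniform bounds), obtaining monotonicity on $(0,\infty)$, and only then taking the one-sided limit at the endpoint. Alternatively, one may integrate the identity $\frac{d}{dt}F_\oc(w(t,\cdot))=-\int_Q e^{\oc w}w_t^2\,dy$ over $[\tau,T]$ and combine with the $L^2$-bound on $w_t$ from Theorem~\ref{thex}, but the limiting argument above suffices.
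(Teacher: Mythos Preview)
Your approach is the same as the paper's: differentiate $F_\oc(w(t,\cdot))$ in $t$, integrate by parts using periodicity, and recognise the integrand via equation \eqref{equationc}. However, you make an algebraic slip when combining the $\oc$-terms. After the integration by parts the derivative actually reads
\[
\frac{d}{dt}F_\oc(w(t,\cdot))=\int_Q e^{\oc w}w_t\left[-\mathrm{div}\Big(\frac{Dw}{\sqrt{1+|Dw|^2}}\Big)-g+\frac{\oc}{\sqrt{1+|Dw|^2}}\right]dy,
\]
since $\oc\sqrt{1+|Dw|^2}-\oc\,|Dw|^2/\sqrt{1+|Dw|^2}=\oc/\sqrt{1+|Dw|^2}$, not $\oc\sqrt{1+|Dw|^2}$ as you wrote. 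The bracket equals $-w_t/\sqrt{1+|Dw|^2}$ by \eqref{equationc}, so the correct identity (formula \eqref{decr} in the paper) is
\[
\frac{d}{dt}F_\oc(w(t,\cdot))=-\int_Q\frac{e^{\oc w}\,w_t^2}{\sqrt{1+|Dw|^2}}\,dy,
\]
rather than your $-\int_Q e^{\oc w}w_t^2\,dy$. For the present lemma the slip is harmless---both expressions are nonpositive---but the precise form with the $1/\sqrt{1+|Dw|^2}$ factor is used later in the proofs of Proposition~\ref{meanconv} and Theorem~\ref{convergence2}, so it is worth getting right. Your treatment of the nonnegativity (via Corollary~\ref{coro}) and of the limit $s\to 0^+$ is more careful than the paper's, which simply records the derivative computation.
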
 
\begin{proof} 
For every solution $w$ to \eqref{equationc}, 
using the definition of the functional $F_\oc$, we get
\begin{eqnarray} \frac{d F_\oc(w(t, \cdot))}{dt} &= & \int_Q e^{\oc w}w_t\left[-\text{div }\left(\frac{Dw}{\sqrt{1+|Dw|^2}}\right) -g +\frac{\oc}{\sqrt{1+|Dw |^2}} \right] 
\nonumber 
\\ &=&-\int_Q \frac{ e^{\oc w}w_t^2}{\sqrt{1+|Dw |^2}}\le 0.
\label{decr}\end{eqnarray}
\end{proof} 
The first result on the asymptotic behavior of the solutions $u$ to \eqref{graphequation} is about the convergence of $\frac{u(t,x)}{t}$ as $t\to +\infty$.
\begin{proposition}\label{meanconv}  
Let  $u$ be  the solution to \eqref{graphequation} and $E$ be the maximal support defined in Proposition \ref{support}. 
Then
\[\lim_{t\to +\infty}\frac{\max_{x\in \R^n} u(t,x)}{t}=  \oc,  
\quad\text{and} \quad\lim_{t\to +\infty}\frac{u(t,x)}{t}=\oc  
\quad  \text{   locally uniformly in }E.
\] Moreover if there exists a bounded solution to \eqref{equ2},  \[ \lim_{t\to +\infty}\frac{u(t,x)}{t}=\oc  
\quad  \text{    uniformly in }\R^n.\]   In particular there exists a constant $C\in\R$ such that 
\begin{equation}\label{log} \min_{Q} u_0(x)\leq   M(t):=\max_Q(u(t,x)-\oc t) \leq C+\frac{\log (1+t)}{\oc}.\end{equation} 
 
\end{proposition}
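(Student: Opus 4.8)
I would work with $w(t,y):=u(t,y)-\oc t$, which solves \eqref{equationc} and is smooth for $t>0$, and with the quantities
\[
M(t):=\max_Q w(t,\cdot),\qquad V(t):=\int_Q e^{\oc w(t,y)}\,dy,\qquad P(t):=\int_Q e^{\oc w(t,y)}\sqrt{1+|Dw(t,y)|^2}\,dy .
\]
Since $\max_{\R^n}u(t,\cdot)=\oc t+M(t)$, all the limits in the statement follow once one proves the two–sided estimate $\min_Q u_0\le M(t)\le C+\oc^{-1}\log(1+t)$ of \eqref{log} together with a local–in–$E$ lower bound for $w$: one then divides by $t$ and lets $t\to+\infty$. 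The substantial part is the upper bound on $M(t)$, which I would obtain by sandwiching $V(t)$ between $c'\,e^{\oc M(t)}$ (from the uniform curvature bound) and a quantity growing at most linearly in $t$ (from the monotonicity of the energy $F_\oc$).

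\emph{Lower bounds.} Pick a non trivial minimizer $\psi$ of $F_\oc$ (Corollary \ref{coro}); by Proposition \ref{psireg}, $\psi$ is locally bounded on its support $E_\psi$, bounded from above, and tends to $-\infty$ at $\partial E_\psi$, so $m:=\inf_Q(u_0-\psi)\in\R$. Lemma \ref{lemmabound} gives $w(t,y)\ge\psi(y)+m$ for all $t\ge0$, $y\in Q$; evaluating near a point $\bar y$ where $u_0-\psi$ is (almost) minimized yields $M(t)\ge w(t,\bar y)\ge u_0(\bar y)\ge\min_Q u_0$, the left inequality of \eqref{log}. The same argument with a minimizer that is finite on a prescribed connected component $E_i$ of the maximal support $E$ (Proposition \ref{support}) shows that for every compact $K\subset E_i$ there is a constant $C_K$ with $u(t,y)=\oc t+w(t,y)\ge\oc t-C_K$ on $K$; hence $\liminf_t u(t,y)/t\ge\oc$ uniformly on $K$, and a fortiori $\liminf_t\max_{\R^n}u(t,\cdot)/t\ge\oc$.

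\emph{Upper bound.} Fix $\tau>0$. \emph{Step 1: $V(t)\ge c'\,e^{\oc M(t)}$ for $t\ge\tau$.} By Lemma \ref{lip} and \eqref{curvatura} the graph of $w(t,\cdot)$ has mean curvature bounded uniformly in $t\ge\tau$; by Proposition \ref{proreg} these graphs are uniformly $\mc C^{1+\alpha}$, so near a maximum point $\bar y$ of $w(t,\cdot)$ (where the tangent plane is horizontal) the graph is a $\mc C^{1+\alpha}$ graph over the horizontal plane with norm bounded independently of $t$; consequently $w(t,\cdot)\ge M(t)-1$ on a ball $B_\delta(\bar y)$ of fixed radius $\delta>0$, and integrating $e^{\oc w}$ over $B_\delta(\bar y)$ gives $V(t)\ge c'\,e^{\oc M(t)}$ with $c'=e^{-\oc}|B_\delta|>0$. (Alternatively, invoke the density estimates for the subgraph, which is a uniform quasi–minimizer of the perimeter.) \emph{Step 2: $V(t)\le C_1(1+t)$.} By Lemma \ref{lemmadecreasing}, $t\mapsto F_\oc(w(t,\cdot))$ is nonnegative, nonincreasing, bounded by $K_0:=F_\oc(u_0)<\infty$, with $-\tfrac{d}{dt}F_\oc(w(t,\cdot))=\int_Q e^{\oc w}w_t^2(1+|Dw|^2)^{-1/2}\,dy$. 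Since $F_\oc(w(t,\cdot))=P(t)-\oc^{-1}\int_Q e^{\oc w}g\,dy$ we get $P(t)\le C_2(1+V(t))$; differentiating $V$ and using Cauchy–Schwarz,
\[
V'(t)=\oc\!\int_Q e^{\oc w}w_t\,dy\ \le\ \oc\,\big(-\tfrac{d}{dt}F_\oc(w(t,\cdot))\big)^{1/2}P(t)^{1/2}\ \le\ \oc\,C_2^{1/2}\,\big(-\tfrac{d}{dt}F_\oc(w(t,\cdot))\big)^{1/2}(1+V(t))^{1/2},
\]
so $\tfrac{d}{dt}(1+V(t))^{1/2}\le\tfrac{\oc}{2}C_2^{1/2}\big(-\tfrac{d}{dt}F_\oc(w(t,\cdot))\big)^{1/2}$; integrating on $[\tau,t]$ and using Cauchy–Schwarz again with $\int_\tau^t(-\tfrac{d}{dt}F_\oc)\le K_0$ yields $(1+V(t))^{1/2}\le(1+V(\tau))^{1/2}+c\,(t-\tau)^{1/2}$, i.e. $V(t)\le C_1(1+t)$ for $t\ge\tau$. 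For $t\in[0,\tau]$ the same bound follows (after enlarging $C_1$) from $w(t,\cdot)\le(\max_Q g-\oc)t+\|u_0\|_\infty$.

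\emph{Conclusion and main difficulty.} Combining Steps 1 and 2 gives $e^{\oc M(t)}\le(c')^{-1}C_1(1+t)$, hence $M(t)\le C+\oc^{-1}\log(1+t)$; together with $M(t)\ge\min_Q u_0$ this is \eqref{log}. Dividing by $t$ then gives $\max_{\R^n}u(t,\cdot)/t\to\oc$ and, with the local lower bound, $u(t,y)/t\to\oc$ locally uniformly on $E$; and if \eqref{equ2} admits a bounded solution, Lemma \ref{lemmabound} bounds $|w(t,\cdot)|$ uniformly in $t$, so the convergence is uniform on $\R^n$. I expect the pair of estimates in the upper bound to be the only real difficulty: Step 2 — realizing that the energy dissipation identity together with Cauchy–Schwarz forces $\sqrt{1+V}$ to grow no faster than $\sqrt{t}$, hence $V$ linearly and $M$ only logarithmically — is the key insight, while Step 1 is technically heavier but routine given Lemma \ref{lip}, \eqref{curvatura} and Proposition \ref{proreg}.
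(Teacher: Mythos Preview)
Your proposal is correct and shares the paper's overall architecture: the lower bound on $M(t)$ and the local lower bound on $w$ in $E$ via Lemma \ref{lemmabound}, and the upper bound on $M(t)$ by combining the energy dissipation identity \eqref{decr} with Cauchy--Schwarz in time and the uniform $\mc C^{1+\alpha}$ regularity of Proposition \ref{proreg}. The implementation of the upper bound differs, however. The paper works with the \emph{localized} quantity $\W(t)=\max_x\fint_{B_\rho(x)}\tfrac{2}{\oc}e^{\oc w(t,\cdot)/2}$, which is directly comparable to $e^{\oc M(t)/2}$ once $\rho$ is chosen (via Proposition \ref{proreg}) so that $|Dw|\le 1$ on the optimal ball; since $f_t^2=w_t^2 e^{\oc w}$ with $f=\tfrac{2}{\oc}e^{\oc w/2}$, the dissipation bound gives $\int_0^T \W'(t)^2\,dt\le C$ and hence $\W(T)\le \W(0)+\sqrt{CT}$. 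You instead use the \emph{global} integral $V(t)=\int_Q e^{\oc w}$, prove $V\ge c' e^{\oc M}$ from the same regularity, and control $V'$ by the Cauchy--Schwarz split $V'\le \oc(-\partial_t F_\oc)^{1/2}P^{1/2}$ together with $P\le C_2(1+V)$, which yields $(1+V)^{1/2}\lesssim 1+\sqrt{t}$. Both routes arrive at $e^{\oc M(t)/2}\lesssim \sqrt{1+t}$; your variant has the mild advantage of avoiding the (a.e.) differentiation of a pointwise maximum, at the cost of needing the extra observation $P\lesssim 1+V$ from the energy bound.
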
 
\begin{proof}
  
Recall that if the stationary problem \eqref{equ2} has a bounded solution, Lemma \ref{lemmabound} gives an uniform bound on  
$u(t,x)-\oc t$, and then we obtain the result. 

We observe, recalling Lemma \ref{lemmabound}, that to prove the general statement    it is sufficient to prove \eqref{log}. 
 The lower bound on $M$ is an immediate consequence of Lemma \ref{lemmabound}, just by choosing $\psi$ as the maximal nonpositive solution to \eqref{equ2}.  

We define $f(t,x):= \frac{2}{\oc}e^{\frac{\oc w(t,x)}{2}}$, so that 
$f_t^2(t,x)= w_t^2(t,x)e^{ \oc w(t,x) }$. Integrating \eqref{decr} between $0$ and $T$, we obtain
\[
C\geq F_\oc(u_0)\geq F_\oc(u_0)-F_\oc(w(T,\cdot)= \int_0^T\int_Q \frac{f_t^2(t,x)}{\sqrt{1+|Dw(t,x)|^2}} dxdt
\] 
for some constant $C>0$ depending only on $u_0$ and  $g$.

Let
\[
\W(t)=\max_x \fint_{B_\rho(x)} f(t,y) dy,
\]
Given a point $\bar x(t)$ where either $M(t)$ or $\W(t)$ attain the maximum,
thanks to Proposition \ref{proreg}
we can choose  $\rho<2/\oc$, independent of $t$, such that $|Dw(x,t)|\leq 1$ for every $x\in B_\rho(\bar x(t))$. 
Notice that 
\[
\frac{2}{\oc}\,e^\frac{\oc (W(t)-\rho)}{2}
\le \W(t)\le \frac{2}{\oc}\,e^\frac{\oc W(t)}{2}\qquad {\rm for\ all\ }t\ge 0,
\]
so that, in order to prove the second inequality in \eqref{log}, it is enough to show
\begin{equation}\label{tildew}
\W(t)\le C(1+\sqrt{t}).
\end{equation}
 
Given $t\ge 0$ let $\mathcal Z(t)$ be the set of points where $\W(t)$ attains its maximum.
Possibly increasing $C$, and using  the fact that  $|Dw(x,t)|\leq 1$ on  $B_\rho(\bar x(t))$,
from the previous inequality we get 
\begin{eqnarray}\label{maxest1}
\nonumber C  &\geq& \int_0^T\max_{\bar x(t)\in\mathcal Z(t)}\fint_{B_\rho(\bar x(t))} f_t^2(t,x) dx\, dt
\\
&\geq& \int_0^T\left(\max_{\bar x(t)\in\mathcal Z(t)}\fint_{B_\rho(\bar x(t))} f_t (t,x) dx\right)^2 dt
\\  \nonumber
&=& \int_0^T\W'(t)^2 dt
  \geq  \frac{1}{T} \left(\int_0^T \vert \W'(t)\vert\, dt\right)^2 .
\end{eqnarray} 
{}From \eqref{maxest1} we then have
\[
\W(T)\le \W(0)+\int_0^T \vert \W'(t)\vert\, dt\le \W(0)+\sqrt{CT},
\]
which proves \eqref{tildew}.
%
\end{proof}
 
We now prove the  main convergence result, on the stability of our traveling wave solutions.

\begin{theorem}\label{convergence2} 
Let $u(t,x)$ be the unique solution to \eqref{graphequation} with periodic boundary conditions, let $M(t):=\max_Q w(t,y)$, 
and let 
\[
\tilde w(t,x):= w(t,x)-M(t) = u(t,x)-\max_{x\in Q} (u(t,x))\le 0.
\]
Then, for any sequence $t_n\to +\infty$ there exists a subsequence $t_{n_k}$ such that, as $k\to+\infty$, 
\begin{equation}\label{conve1} 
w(t_{n_k},x)  \longrightarrow 
\left\{\begin{array}{ll}
\opsi(x)  & {\rm\ locally\ in\ } \mc C^{1+\alpha}(E_\opsi)
\\
-\infty & {\rm\ locally\ uniformly\ in\ } Q\setminus \overline E_\opsi
\end{array}\right.
\end{equation}  
for all $\alpha\in (0,1)$, where $\opsi$ is a traveling wave solution to \eqref{equ2}.

\end{theorem}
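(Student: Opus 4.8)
The plan is to use the compactness furnished by Proposition~\ref{proreg} together with the monotonicity of the energy $F_\oc$ along the flow (Lemma~\ref{lemmadecreasing}) and the characterization of minimizers of $F_\oc$ in Proposition~\ref{psireg}. First I would fix a sequence $t_n\to+\infty$ and set $\wt w_n(x):=w(t_n,x)-M(t_n)=\tilde w(t_n,x)\le 0$. By Lemma~\ref{lip} the functions $w(t_n,\cdot)$ have uniformly bounded time derivatives for $t_n$ large, hence by \eqref{curvatura} the graphs $\Gamma_{w}(t_n)$ have uniformly bounded prescribed curvature, and Proposition~\ref{proreg} gives that they are uniformly $\mc C^{1+\alpha}$ hypersurfaces. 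After a vertical translation by $M(t_n)$ this is still true for $\Gamma_{\wt w_n}$, so by the compactness theorem for quasi minimizers of the perimeter \cite{Amb97} we may extract a subsequence (not relabeled) along which $\Gamma_{\wt w_n}$ converges locally in $L^1$ to a limit hypersurface $\Gamma_\infty$ of class $\mc C^{1+\alpha}$; elliptic regularity \cite{m} upgrades this to local $\mc C^{1+\alpha}$ convergence of the graphs. Writing $\Sigma_n$ for the subgraphs, $\Sigma_n\to\Sigma_\infty$ locally in $L^1$, and $\Sigma_\infty$ is the subgraph of a function $\opsi:Q\to[-\infty,+\infty)$ with $\opsi\le 0$.

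Next I would identify $\opsi$ as a minimizer of $F_\oc$. By Proposition~\ref{meanconv} we have $M(t_n)=o(t_n)$, more precisely $M(t_n)\le C+\tfrac{\log(1+t_n)}{\oc}$; on the other hand $M(t)$ is bounded below. The key point is that $F_\oc(w(t,\cdot))$ is nonincreasing in $t$ and bounded below by $0$ (Lemma~\ref{lemmadecreasing}), so $F_\oc(w(t,\cdot))\to\ell\ge 0$ as $t\to+\infty$ and, integrating \eqref{decr}, $\int_0^\infty\int_Q \tfrac{e^{\oc w}w_t^2}{\sqrt{1+|Dw|^2}}\,dy\,dt<+\infty$. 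Using the relation $F_\oc(w(t,\cdot))=e^{\oc M(t)}F_\oc(\wt w(t,\cdot))$ — which follows directly from the definition of $F_\oc$ as $\int_Q e^{\oc v}(\cdots)$ — and $e^{\oc M(t_n)}\ge e^{\oc\min_Q u_0}>0$, we get that $F_\oc(\wt w_n)$ is bounded; combined with the time-derivative bound and the growth of $M$, a standard argument (choosing a nearby time where $\int_Q e^{\oc\wt w}\wt w_t^2/\sqrt{1+|D\wt w|^2}$ is small, so that $\wt w_n$ is close to being stationary for the Euler--Lagrange equation of $F_\oc$) shows that $\opsi$ satisfies \eqref{equ2} in the open set $E_\opsi=\{\opsi>-\infty\}$, hence is a generalized traveling wave; lower semicontinuity of $F_\oc$ under $L^1$-convergence of $e^{\oc\wt w_n}$ then gives $F_\oc(\opsi)\le\ell$, and since $F_\oc\ge 0$ and $\opsi$ solves the equation we conclude $F_\oc(\opsi)=0$, i.e. $\opsi$ is a nontrivial minimizer.

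It then remains to translate the geometric convergence $\Sigma_n\to\Sigma_\infty$ into the two-regime statement \eqref{conve1}. On $E_\opsi$ the $\mc C^{1+\alpha}$ convergence of the graphs $\Gamma_{\wt w_n}$ to $\Gamma_\opsi$ over compact subsets, established above, is exactly $\wt w_n\to\opsi$ locally in $\mc C^{1+\alpha}(E_\opsi)$. On $Q\setminus\overline{E_\opsi}$ one argues that $\Sigma_\infty\cap\big((Q\setminus\overline{E_\opsi})\times\R\big)=\emptyset$: if $\wt w_n$ did not tend to $-\infty$ locally uniformly there, then along a further subsequence $\wt w_n(x_n)\ge -K$ for some $x_n\to x_0\notin\overline{E_\opsi}$, and the uniform curvature bound \eqref{curvatura} together with the density estimate \eqref{eqdens} (applied to the subgraphs $\Sigma_n$, which by the proof of Lemma~\ref{lemper} are local minimizers of $\mc F_\oc$ up to the curvature error, or more simply via the Harnack-type lower bound for nonnegative supersolutions of the prescribed-curvature equation) would force $(x_0,-K)\in\Sigma_\infty$, hence $x_0\in\overline{E_\opsi}$, a contradiction; local uniform divergence to $-\infty$ follows since $\wt w_n\le 0$ and the $\limsup$ of $\wt w_n$ cannot stay bounded on any compact subset of $Q\setminus\overline{E_\opsi}$.

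\textbf{Main obstacle.} The delicate step is the passage from the mere decrease of $F_\oc$ along the flow to the statement that the limit $\opsi$ is a \emph{minimizer} (equivalently that it actually solves \eqref{equ2}): one must rule out that the dissipation $\int\int e^{\oc w}w_t^2/\sqrt{1+|Dw|^2}$ concentrates in a way that prevents $\wt w_n$ from being asymptotically critical, and one must control the interaction between the (slowly growing) shift $M(t)$ and the normalization. This is handled by the identity $F_\oc(w(t,\cdot))=e^{\oc M(t)}F_\oc(\wt w(t,\cdot))$, the two-sided control of $M(t)$ from Proposition~\ref{meanconv}, and a mean-value selection of times $s_n\in[t_n-1,t_n]$ along which the instantaneous dissipation is $o(1)$; once $\wt w_n$ is replaced by $\wt w(s_n,\cdot)$ (which is asymptotically indistinguishable by Lemma~\ref{lip}), the Euler--Lagrange equation passes to the limit by the elliptic compactness already invoked. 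The second, more routine but still nontrivial, difficulty is the behavior outside $\overline{E_\opsi}$, which relies essentially on the density lower bound \eqref{eqdens} to exclude pieces of the limit subgraph sitting over the complement of the support.
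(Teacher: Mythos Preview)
Your overall architecture matches the paper's: extract compactness for the normalized functions $\tilde w(t_n,\cdot)$, use the decay of $F_\oc$ along the flow to force the limit to be a minimizer, then read off the two regimes from the regularity theory and the density estimate. The treatment of the two regimes in your last paragraph is essentially what the paper does.

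The real divergence is in how you identify the limit as a minimizer, and there your argument has a gap. You propose a mean--value selection of times $s_n\in[t_n-1,t_n]$ with small instantaneous dissipation, and then claim that ``$\wt w(s_n,\cdot)$ is asymptotically indistinguishable from $\wt w(t_n,\cdot)$ by Lemma~\ref{lip}''. Lemma~\ref{lip} only gives $|w(t_n,\cdot)-w(s_n,\cdot)|\le C$, hence $|\tilde w(t_n,\cdot)-\tilde w(s_n,\cdot)|\le 2C$: the two are a bounded distance apart, not $o(1)$ close. So even if $\tilde w(s_n,\cdot)$ converges to a traveling wave, you have not shown that $\tilde w(t_n,\cdot)$ (or any subsequence of it) does; and the theorem asks for a subsequence of the \emph{given} $t_n$. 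Passing the Euler--Lagrange equation to the limit along the shifted times therefore does not prove the statement as written. A second, smaller issue: your line ``lower semicontinuity gives $F_\oc(\opsi)\le\ell$'' is not what lower semicontinuity yields; it gives $F_\oc(\opsi)\le\liminf F_\oc(\tilde w_n)=\liminf e^{-\oc M(t_n)}F_\oc(w(t_n,\cdot))$, which is not $\ell$ unless $M$ stays bounded.

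The paper avoids the time--shift problem by working in the transformed variable $W=e^{\oc w}/\oc$ and exploiting the \emph{one--homogeneity} of $G_\oc$. It observes that, for each fixed $t$, $W(t,\cdot)$ is an exact critical point of the modified convex functional
\[
\widetilde G_{\oc,t}(V)=\int_Q\Big(\sqrt{\oc^2V^2+|DV|^2}-\tilde g_t\,V\Big)\,dy,\qquad
\tilde g_t=g-\frac{W_t}{\sqrt{\oc^2W^2+|DW|^2}},
\]
so by homogeneity $\widetilde G_{\oc,t}(\WW(t,\cdot))=0$ \emph{identically in $t$}, with no need to make $w_t$ small. Then $G_\oc(\WW)=\widetilde G_{\oc,t}(\WW)-\int_Q \WW W_t/\sqrt{\oc^2W^2+|DW|^2}$, and only this last cross term has to be shown small along a subsequence; a H\"older estimate bounds it by $\big(\int_Q W_t^2/\sqrt{\cdots}\big)^{1/2}\big(\int_Q e^{-\oc M}/\oc^2\big)^{1/2}$, after which lower semicontinuity of $G_\oc$ gives $G_\oc(W_\infty)\le 0$. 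This is the idea you are missing: rather than making $w_t$ small and passing the PDE to the limit, absorb $w_t$ into the forcing and use that the resulting one--homogeneous functional vanishes on its own critical point at every time.
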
 

\begin{proof}
We let 
\[
W(t,y):=\frac{e^{\oc w(t,y)}}{\oc},\qquad \WW(t,y):=\frac{e^{\oc \w(t,y)}}{\oc}= e^{-\oc M(t)} W(t,y)\le \frac{1}{\oc}\,.
\]
Notice that from \eqref{equationc} it follows that $W$ satisfies the equation
\begin{equation}\label{equationC}  
W_t   =  \sqrt{\oc^2W^2+|DW|^2}\left({\rm div}\left(\frac{DW}{\sqrt{\oc^2W^2+|DW|^2}}\right)
+ g\right) -\oc^2 W \qquad  \text{in } (0, +\infty)\times Q.
\end{equation} 
By \eqref{decreasing} and \eqref{log}, for all $t\ge 0$ we have
$$
G_\oc(\WW(t,\cdot)) = F_\oc(\w(t, \cdot))=e^{-\oc M(t)} F_\oc(w(t,\cdot))\leq e^{-\oc(\min_Q u_0)} F_\oc(u_0).
$$ 
In particular,  
\[
\int_Q \sqrt{\oc^2\WW^2(t,y) +|D\WW(t,y)|^2}\,dy = G_\oc(\WW(t,\cdot)) +\int g(y)\WW(t,y)\,dy \le C
\]
for all $t\ge 0$, where $C$ depends only on $u_0$ and $g$.
Hence, up to extracting a subsequence $t_{n_k}$, $\WW(t_{n_k},\cdot)\rightharpoonup W_\infty$ weakly* in $BV_{\rm per}(Q)$, as $k\to +\infty$.
Notice that, as in the previous section, the epigraph of $\tilde w(t,\cdot)$  is, for every $t>0$, a minimizer of the prescribed curvature functional 
\[
\Sigma\mapsto \Per_c(\Sigma,\T^n\times \R)-\int_{\Sigma} e^{c z} \tilde g_t(y) \, dydz 
\] where $\tilde g_t$ is an appropriate bounded function, depending on $t$.  
It therefore satisfies the lower density bound \eqref{eqdens}, which implies $W_\infty\not\equiv 0$.
We claim that 
\begin{equation}\label{eqclaim}
G_\oc(W_\infty)=0.
\end{equation}
We introduce the modified functional, for $t>0$,  
\begin{equation*}
\widetilde G_{\oc, t}(W):= \int_Q \left( \sqrt{\oc^2W^2+|DW|^2} - \tilde g_t \,W\right)dy\end{equation*}  where \[\tilde g_t(y):= g(y)-\frac{W_t(t,y)}{\sqrt{\oc^2W^2(t,y)+|DW(t,y)|^2}}\in L^\infty(Q),\qquad \|\tilde g_t\|_\infty \leq C,\]
with $C$ independent of $t$. Note that from  \eqref{equationC}    it follows that, at every $t>0$, $W(t,\cdot)$ is a  critical points of the functional  $ \widetilde G_{\oc,t}$ and so $\widetilde G_{\oc,t}(W(t,\cdot))=0$. Moreover, also  $\widetilde G_{\oc,t}(\WW(t,\cdot))=0$. 
Recalling \eqref{decr}, up to extracting a further subsequence, we can assume that
\begin{eqnarray}\label{eqalpha}
\partial_t G_\oc(W(t_{n_k},\cdot)) = \partial_t F_\oc(w(t_{n_k},\cdot)) &=&
-\int_Q \frac{e^{\oc w(t_{n_k},y)}w_t^2(t_{n_k},y)}{\sqrt{1+|Dw(t_{n_k},y)|^2}}\,dy
\\ \nonumber
&=& -\int_Q \frac{W_t^2(t_{n_k},y)}{\sqrt{\oc^2W^2(t_{n_k},y)+|DW(t_{n_k},y)|^2}}\,dy \to 0
\end{eqnarray}
as $k\to +\infty$.

Since $G_\oc(v)\ge 0$ for every $v$,  to prove the claim \eqref{eqclaim} it is sufficient to show that $G_\oc(W_\infty)\leq 0$. We get, using the convexity of   $G_{\oc}$ and the definition of the modified functional $\widetilde G_{\oc,t}$, 
\begin{eqnarray*}
G_\oc(W_\infty)&\le& \liminf_{k\to +\infty}G_\oc(\WW(t_{n_k},y))  \\
&=& \liminf_{k\to +\infty}\left( \widetilde G_{\oc, t_{n_k}}(\WW(t_{n_k},y))
-\int_Q\frac{\WW(t_{n_k},y) W_t(t_{n_k},y)}{\sqrt{\oc^2W^2(t_{n_k},y)+|DW(t_{n_k},y)|^2}}\,dy\right) \\
&=& \liminf_{k\to +\infty} -\int_Q\frac{\WW(t_{n_k},y) W_t(t_{n_k},y)}{\sqrt{\oc^2W^2(t_{n_k},y)+|DW(t_{n_k},y)|^2}}\,dy  \end{eqnarray*}
 since $\widetilde G_\oc(\WW(t_{n_k},y))=0$. 
 Using the H\"older inequality, \eqref{eqalpha} and the definition of $\WW$, we obtain \begin{eqnarray*}  &&  \liminf_{k\to +\infty}   \int_Q\frac{-\WW(t_{n_k},y) W_t(t_{n_k},y)}{\sqrt{\oc^2W^2(t_{n_k},y)+|DW(t_{n_k},y)|^2}}\,dy \\ &\leq&   
 \liminf_{k\to +\infty}   
 \left(\int_Q \frac{W^2_t(t_{n_k},y)}{\sqrt{\oc^2W^2(t_{n_k},y)+|DW(t_{n_k},y)|^2}}\,dy \right)^\frac{1}{2}
 \left(\int_Q \frac{e^{-\oc M(t)}}{\oc^2}dy\right)^\frac{1}{2} =0
 \end{eqnarray*} which proves our claim. 
 In particular, $\opsi:=\log(\oc W_\infty)/\oc: E_\opsi\to [-\infty,+\infty)$
is a traveling wave solution of \eqref{equ2} with $c=\oc$.

Let us now prove \eqref{conve1}. Given $y\in E_\opsi$, by Theorem \ref{regularity}
there exists $r>0$ such that $B_r(y)\subset E_\opsi$ and $\|D\tilde w(t_{n_k},y)\|_{L^\infty(B_r(y))}$ is uniformly bounded in $k$.
By standard elliptic regularity \cite{GT} it then follows that the functions $\tilde w(t_{n_k},\cdot)$ are uniformly bounded 
in $\mc C^{1+\alpha}(B_r(y))$ for all $\alpha\in (0,1)$, so that they converge to $\opsi$ locally in $\mc C^{1+\alpha}(E_\opsi)$.

Fix now $y\in Q\setminus\overline E_\opsi$ and take $r>0$ such that $B_r(y)\subset Q\setminus\overline E_\opsi$.
Assume by contradiction that there exist $c\in\R$ and $y_k\in B_r(y)$, $k\in\mathbb N$, such that
$\tilde w(t_{n_k},y_k)\ge c$ for all $k$. By the density estimate \eqref{eqdens} this would imply
$\int_Q \WW(t_{n_k},y)dy\ge c'$ for some $c'\in\R$, contradicting the fact that 
$\WW(t_{n_k},y)\to W_\infty$ in $L^1(Q)$, with $W_\infty\equiv 0$ in $B_r(y)$.
We thus proved \eqref{conve1}.
\end{proof} 
\begin{remark}\rm 
If the functional $F_\oc$ admits a unique minimizer $\bar \psi:E_\opsi\to \R$ up to an additive constant 
(for instance if the maximal support $E$ is connected, see Proposition \ref{support}), then instead of \eqref{conve1} we have
\begin{equation}\label{conve2} 
\lim_{t\to +\infty}w(t,x) \,=\,  
\left\{\begin{array}{ll}
\opsi(x)-\max^{}_{\overline E_\opsi} \bar\psi  & {\rm\ locally\ in\ } \mc C^{1+\alpha}(E_\opsi)
\\
-\infty & {\rm\ locally\ uniformly\ in\ } Q\setminus \overline E_\opsi
\end{array}\right.
\end{equation}  
for all $\alpha\in (0,1)$.
\end{remark}
\begin{corollary}\label{convfort}
Let $u(t,x)$ be the unique solution to \eqref{graphequation} with periodic boundary conditions, and assume that there exist bounded solutions to \eqref{equ2} in $Q$ (see Proposition \ref{classicaltw}). Then 
\begin{equation*}  
u(t,x)-\oc t \longrightarrow\opsi(x) \qquad {\rm in\ } \mc C^{1+\alpha}(Q),{\rm\ as\ }t\to+\infty,
\end{equation*} where $\opsi$ is a bounded solution to \eqref{equ2}.
\end{corollary}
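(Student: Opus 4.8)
The plan is to deduce Corollary \ref{convfort} from Theorem \ref{convergence2} together with the a priori bounds in Lemma \ref{lemmabound} and Remark \ref{remdecr}. First I would fix a bounded solution $\psi$ to \eqref{equ2} in $Q$, which exists by hypothesis, and recall that by Lemma \ref{lemmabound} the quantity $m(t)=\min_Q(w(t,\cdot)-\psi)$ is nondecreasing, while by Remark \ref{remdecr} the quantity $M(t)=\max_Q(w(t,\cdot)-\psi)$ is nonincreasing. Since both are monotone and $w-\psi$ is uniformly bounded (again Lemma \ref{lemmabound}), $m(t)$ and $M(t)$ converge to finite limits $m_\infty\le M_\infty$ as $t\to+\infty$. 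Since $E_\psi=Q$ here, the convergence in \eqref{conve1} of Theorem \ref{convergence2} is locally (hence globally, by compactness of $\overline Q$ and periodicity) in $\mathcal{C}^{1+\alpha}(Q)$ along a subsequence to some bounded traveling wave $\overline\psi$; by the uniqueness statement in Proposition \ref{psireg} (the case $E$ connected, or more generally since any two bounded solutions differ by a constant on each component and $E=Q$ forces a single additive constant), we have $\overline\psi=\psi+k$ for a suitable $k\in\R$, so the subsequential limit of $w(t_{n_k},\cdot)$ is $\psi+k-\max_Q\overline\psi$, i.e. again a vertical translate of $\psi$.

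The next step is to upgrade subsequential convergence to full convergence by exploiting the two monotone quantities. Along any sequence $t_n\to+\infty$, Theorem \ref{convergence2} gives a sub-subsequence along which $w(t_{n_k},\cdot)-M(t_{n_k})\to\overline\psi$ in $\mathcal{C}^{1+\alpha}(Q)$ with $\overline\psi=\psi+\text{const}$; evaluating $\min$ and $\max$ of $w(t_{n_k},\cdot)-\psi$ and using that these are $m(t_{n_k})\to m_\infty$ and $M(t_{n_k})\to M_\infty$ pins down the additive constant: the subsequential limit of $w(t,\cdot)$ must be $\psi+m_\infty$ (and, consistently, $\min$ and $\max$ force $m_\infty=M_\infty$, so in fact $w(t,\cdot)-\psi$ converges uniformly to a constant). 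Since every subsequence has a further subsequence converging in $\mathcal{C}^{1+\alpha}(Q)$ to the \emph{same} limit $\psi+m_\infty$, the whole family $w(t,\cdot)$ converges to $\overline\psi:=\psi+m_\infty$ in $\mathcal{C}^{1+\alpha}(Q)$ as $t\to+\infty$; this $\overline\psi$ is again a bounded solution to \eqref{equ2}. Translating back, $u(t,x)-\oc t=w(t,x)\to\overline\psi(x)$ in $\mathcal{C}^{1+\alpha}(Q)$.

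Two technical points deserve care. The main obstacle is ensuring the $\mathcal{C}^{1+\alpha}$ convergence is \emph{global} on $\overline Q$ (not merely local on $E_\opsi=Q$): this follows because Proposition \ref{proreg} gives uniform $\mathcal{C}^{1+\alpha}$ bounds on the graphs $\Gamma_w(t)$ for $t\ge\tau$, and together with Lemma \ref{lemmabound} (uniform $L^\infty$ bound on $w$), the functions $w(t,\cdot)$ lie in a compact subset of $\mathcal{C}^{1+\alpha'}(\overline Q)$ for $\alpha'>\alpha$, so the limit identified locally is attained in $\mathcal{C}^{1+\alpha}(\overline Q)$; here one uses periodicity to cover $\overline Q$ by finitely many balls. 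The second point is to justify that the limit of $w(t_{n_k},\cdot)-M(t_{n_k})$, a priori only a \emph{generalized} traveling wave with support $E_\opsi$, actually has $E_\opsi=Q$: this is because the lower bound $w(t,\cdot)-\psi\ge m_\infty>-\infty$ passes to the limit, forcing $\opsi\ge\psi+m_\infty-M_\infty>-\infty$ everywhere, hence $E_\opsi=Q$, so the dichotomy in \eqref{conve1} has empty second alternative. With these two points settled, the argument above goes through and yields the stated $\mathcal{C}^{1+\alpha}(Q)$ convergence.
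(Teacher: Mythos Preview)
Your proof is correct and follows essentially the same approach as the paper's: the paper's proof also reduces the question, via Lemma \ref{lemmabound} and Remark \ref{remdecr}, to establishing convergence along a subsequence, which is then obtained by (re)running the argument of Theorem \ref{convergence2}. Your write-up is considerably more explicit --- in particular you spell out why $E_{\opsi}=Q$ and how the monotone quantities $m(t),\,M(t)$ pin down the additive constant so that the sub-subsequence argument applies --- which is fine; just be careful with two small slips: you overload $M(t)$ for both $\max_Q(w(t,\cdot)-\psi)$ (as in Remark \ref{remdecr}) and $\max_Q w(t,\cdot)$ (as in Theorem \ref{convergence2}), and in the bound ``$w(t,\cdot)-\psi\ge m_\infty$'' the correct inequality is $w(t,\cdot)-\psi\ge m(0)$ since $m$ is nondecreasing (this does not affect the argument, as only a uniform finite lower bound is needed).
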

\begin{proof} 
By Lemma \ref{lemmabound} and Remark \ref{remdecr}, it is enough to prove that $w(t_n,x)\to \opsi(x) $ uniformly along a subsequence $t_n\to +\infty$. 
This result can be obtained by repeating the same argument as in the proof of Theorem \ref{convergence2}. \end{proof} 

\begin{remark}\rm 
A straightforward adaptation of the argument in Corollary \ref{convfort} gives that, under assumption \eqref{gcondstat},  
\[ 
u(t,x) \to \psi(x) \qquad {\rm in\ } \mc C^{1+\alpha}(Q),{\rm\ as\ }t\to+\infty, 
\] 
where $\psi$ is a stationary solution  of the parabolic equation \eqref{graphequation} (whose existence has been shown in \cite{bcn}).
\end{remark}

\begin{remark}\rm 
The results of this paper can be easily extended to equation \eqref{graphequation} considered 
on a bounded open set $\Om\subset\R^n$ with Lipschitz boundary, and with Neumann boundary conditions on $\partial \Om$.
\end{remark}



\begin{thebibliography}{1}
\bibitem{Amb97} L. Ambrosio.
\newblock Corso introduttivo alla teoria geometrica della misura ed alle superfici minime. 
\newblock Edizioni della Scuola Normale Superiore, Pisa, 1997.
\bibitem{AFP} L. Ambrosio, N. Fusco, D. Pallara.
\newblock Functions of Bounded Variation and Free Discontinuity Problems.
\newblock Oxford Mathematical Monographs, 2000.
\bibitem{ACM}  F. Andreu-Vaillo, V. Caselles, J. Maz\'on.
\newblock Parabolic quasilinear equations minimizing linear growth functionals.
\newblock  Birkhauser Verlag, Basel, 2004.
\bibitem{bbbl} G. Barles, S.  Biton, M. Bourgoing, O. Ley.
\newblock{ Uniqueness results for quasilinear parabolic equations through
viscosity solutions methods.}
\newblock{ \em Calc. Var. Partial Differential Equations}, 18:159-179, 2003.
\bibitem{bcn} G. Barles, A. Cesaroni, M. Novaga.
\newblock Homogenization of fronts in highly heterogeneous media.  
\newblock {\em SIAM J. Math. Anal.} 43(1):212-227, 2011. 
\bibitem{bpt} G. Barles, A. Porretta, T. Tabet Tchamba.
\newblock On the large time behavior of solutions of the Dirichlet problem for subquadratic viscous Hamilton-Jacobi equations. 
\newblock {\em J. Math. Pures Appl.} 94(5):497-519, 2010.
\bibitem{bs} G. Barles, P. E. Souganidis.
\newblock Space-time periodic solutions and long-time behavior of solutions to quasi-linear parabolic equations. 
\newblock{\em SIAM J. Math. Anal.} 32(6):1311-1323, 2001.
\bibitem{lb}   L. Bendong.
\newblock Periodic traveling waves of a mean curvature flow in heterogeneous media.  
\newblock{ \em Discrete Contin. Dyn. Syst.}25 (1):231-249, 2009.
\bibitem{bc}  L.  Bendong, X.  Chen. 
\newblock Traveling waves of a curvature flow in almost periodic media.  
\newblock {\em J. Differential Equations } 247 (8):2189-2208, 2009. 
\bibitem{brezis} H. Brezis.
\newblock {\em Operateurs Maximaux Monotones.}
\newblock North-Holland, Amsterdam, 1973.
\bibitem{CLS} P. Cardaliaguet, P.-L. Lions, P.E. Souganidis. 
\newblock A discussion about the homogenization of moving interfaces.
\newblock{\em J. Math. Pures Appl.} 91:339-363, 2009.
\bibitem{ct} A. Chambolle, G. Thouroude. 
\newblock Homogenization of interfacial energies and construction of plane-like minimizers in periodic media through a cell problem. 
\newblock {\em Netw. Heterog. Media} 4(1):127-152, 2009.
\bibitem{ck} K.-S. Chou, Y.-C. Kwong.
\newblock On quasilinear parabolic equations which admit global solutions for initial data with unrestricted growth.
\newblock {\em Calc. Var. Partial Differential Equations}, 12 (3):281-315, 2001. 
\bibitem{dl} F. Da Lio.
\newblock  Large time behavior of solutions to parabolic equations with Neumann boundary conditions. 
\newblock{\em J. Math. Anal. Appl. }339(1):384-398, 2008. 
\bibitem{dky} N. Dirr, G. Karali, N.K. Yip.
\newblock Pulsating wave for mean curvature flow in inhomogeneous medium. 
\newblock{\em European J. Appl. Math. }19:661-699, 2008.
\bibitem{eh} K. Ecker, G. Huisken.
\newblock Mean curvature evolution of entire graphs.
\newblock{\em Annals of Math. }130(3):453-471, 1989.
\bibitem{GT} D. Gilbarg, N.S. Trudinger. 
\newblock {\em Elliptic Partial Differential Equations of Second Order}. 
\newblock Springer, Berlin, 1983.
\bibitem{giustibook} E. Giusti. 
\newblock {\em Minimal surfaces and functions of bounded variation.}  
\newblock Monographs in Mathematics, 80. Birkhauser Verlag, Basel, 1984.
\bibitem{giusti}E. Giusti.
\newblock On the equation of surfaces of prescribed mean curvature.
\newblock {\em Invent. Math.}, 46(2):111-137, 1978.
\bibitem{ls} 
P.-L. Lions, P. E. Souganidis.
\newblock{Homogenization of degenerate second-order PDE in periodic and almost periodic environments and applications.}
\newblock{\em Annales IHP - Analyse Nonlin\'eaire}, 22(5):667-677, 2005. 



\bibitem{m}
U. Massari.
\newblock Esistenza e regolarit\`a delle ipersuperfice di curvatura media assegnata in $\R^{n}$. 
\newblock {\em   Arch. Rational Mech. Anal.}, 55:357-382, 1974.

\bibitem{mrr}
R. Monneau, J.M. Roquejoffre, V. Roussier-Michon.
\newblock Travelling graphs for the forced mean curvature motion in an arbitrary space dimension.
\newblock {\em Preprint}, 2011.

\bibitem{mu}
C. Muratov.
\newblock A global variational structure and propagation of disturbances in reaction-diffusion
systems of gradient type. 
\newblock{\em Discrete Cont. Dyn. Syst. B}, 4:867-892, 2004.
\bibitem{nm2}
C. Muratov, M. Novaga.
\newblock  Front propagation in infinite cylinders. II. The sharp reaction zone limit. 
\newblock {\em Calc. Var. Partial Differential Equations}, 31(4):521-547, 2008. 
\bibitem{nr}
G. Namah, J.M. Roquejoffre.
\newblock Convergence to periodic fronts in a class of semilinear parabolic equations.
\newblock{\em Nonlinear Differential Equations Appl.} 4:521-536, 1997.
\bibitem{nt}
H. Ninomiya, M. Taniguchi.
\newblock  Stability of traveling curved fronts in a curvature flow with driving force.
\newblock{\em Methods Appl. Anal. }8:429-450, 2001.  
\bibitem{t}
I. Tamanini.
\newblock  Boundaries of Caccioppoli sets with H\"older-continuous normal vector.
\newblock {\em J. Reine Angew. Math.}, 334:27-39, 1982.   
\end{thebibliography}
\end{document}